\documentclass[11pt]{article}

\usepackage[T1]{fontenc}
\usepackage[utf8]{inputenc}
\usepackage{lmodern}
\usepackage{amsmath,amssymb,amsthm,mathrsfs}
\usepackage[margin=1in]{geometry}
\usepackage{microtype}
\usepackage{parskip}
\usepackage{needspace}
\usepackage{xcolor}
\usepackage{mathtools,enumitem}
\usepackage[hidelinks]{hyperref}
\hypersetup{
 pdftitle={A planar algebraic Zarankiewicz theorem},
 pdfauthor={},
 pdfsubject={Incidences over arbitrary fields and Boolean combinations of polynomial equations}
}
\allowdisplaybreaks
\newtheorem{theorem}{Theorem}[section]
\newtheorem{lemma}[theorem]{Lemma}
\newtheorem{proposition}[theorem]{Proposition}
\newtheorem{corollary}[theorem]{Corollary}
\theoremstyle{definition}

\theoremstyle{remark}
\newtheorem{remark}[theorem]{Remark}
\newcommand{\F}{\mathbb{F}}
\newcommand{\kk}{\Bbbk}
\newcommand{\Fp}{\mathbb{F}_p}
\newcommand{\CC}{\mathcal C}
\newcommand{\family}{\mathfrak C}
\newcommand{\Der}{\mathscr D}
\newcommand{\ord}{\operatorname{ord}}
\newcommand{\Res}{\operatorname{Res}}
\newcommand{\Disc}{\operatorname{Disc}}
\newcommand{\Iess}{I^{\!*}}
\newcommand{\Zess}{Z^{\!*}}
\newcommand{\epspart}[1]{\left(#1\right)_+}
\AddToHook{env/theorem/before}{\Needspace{5\baselineskip}}
\AddToHook{env/lemma/before}{\Needspace{5\baselineskip}}
\AddToHook{env/proposition/before}{\Needspace{5\baselineskip}}
\AddToHook{env/corollary/before}{\Needspace{5\baselineskip}}

\title{A planar algebraic Zarankiewicz theorem over prime fields}
\author{Le Quang-Ham\thanks{The Viet Nam National Institute of Educational Sciences. ~Email: {\tt hamlq@vnies.edu.vn}}}
\date{}

\begin{document}
\maketitle
\begin{abstract}
We prove an incidence bound for bipartite graphs on finite subsets of
$\F^2\times \F^2$ defined by Boolean combinations of polynomial equations
of bounded degree. If such a graph is $K_{k,k}$-free and its vertex classes
have sizes $m$ and $n$, then it has
$O_{t,k}((mn)^{2/3}+m+n+mn/p)$ edges, where $t$ bounds the description
complexity, $p$ is the characteristic of $\F$, and $1/p=0$ in characteristic
zero. We also prove this bound for incidences between points and distinct
geometrically irreducible components of a two-parameter polynomial
family, allowing singular and nonreduced members.
The proof extends Lewko's interpolation and contact-multiplicity method
from lines to algebraic families. Applications include rich components,
polynomial values on difference sets, and polynomial expansion.
\end{abstract}

\section{Introduction}

For finite sets of points $P$ and lines $\mathcal L$ in a plane, write
\[
 I(P,\mathcal L)=|\{(q,\ell)\in P\times\mathcal L:q\in\ell\}|.
\]
The celebrated Szemer\'edi--Trotter theorem \cite{ST} states that, for $m$ points and
$n$ lines in $\mathbb R^2$,
\[
 I(P,\mathcal L)\ll (mn)^{2/3}+m+n.
\]
This estimate is sharp up to an absolute constant. It has led to many
extensions involving algebraic curves and graphs defined by geometric
relations.

In a recent remarkable paper \cite{Lewko}, Lewko proved the corresponding estimate
over an arbitrary field $\F$:
\begin{equation}\label{eq:lewko}
 I(P,\mathcal L)\ll (mn)^{2/3}+m+n+\frac{mn}{p}.
\end{equation}
Here and throughout the paper, $p=\operatorname{char}\F$ in positive
characteristic, while $p=\infty$ and $1/p=0$ in characteristic zero.
Lewko's proof uses polynomial interpolation and bounds for orders of
contact. It applies without an order on the field or a partition into
cells. In this paper, we extend this approach to families of algebraic curves and to
graphs defined by Boolean combinations of polynomial equations.

Before Lewko's work, Bourgain, Katz and Tao \cite[Theorem~6.2]{BKT}
obtained a power saving over the elementary $O(N^{3/2})$ incidence bound
in prime fields. For $N=p^\alpha$, where $0<\alpha<2$ is fixed, their
bound for at most $N$ points and $N$ lines is
$O_\alpha(N^{3/2-\varepsilon(\alpha)})$ for some
$\varepsilon(\alpha)>0$.
Explicit improvements were subsequently obtained by Helfgott and Rudnev
\cite{HR} and Jones \cite{Jones}.
Over arbitrary fields, Stevens and de Zeeuw \cite[Theorem~3]{SdZ} proved
\[
 I(P,\mathcal L)\ll m^{11/15}n^{11/15}
\]
when $m^{7/8}<n<m^{8/7}$ and $m^{-2}n^{13}\ll p^{15}$.
In particular, this gives $O(N^{22/15})$ when
$m=n=N\ll p^{15/11}$.

For large sets over finite fields, a different estimate is well-known.
Vinh \cite[Theorem~3]{Vinh} proved that, over every finite
field $\mathbb F_{q_0}$,
\[
 I(P,\mathcal L)\le\frac{mn}{q_0}+\sqrt{q_0mn}.
\]
This holds for every prime power $q_0$, with no restriction on $m$ or $n$.
In particular, it gives $I(P,\mathcal L)=O(mn/q_0)$ when $mn\ge q_0^3$.
Here the denominator is the field order $q_0$, whereas the last term
in~\eqref{eq:lewko} depends on the characteristic.

For distinct geometrically irreducible plane curves of degree at most
$t$, B\'ezout's theorem and Cauchy--Schwarz give
$I(P,\CC)\ll_t m^{1/2}n+m$, which is $O_t(N^{3/2})$ when $m=n=N$.
Bounds for arbitrary sets of irreducible conics over prime fields were
obtained by Mohammadi, Pham and Warren \cite{MPW}. Their families need
not have two parameters, so their results apply to different
classes of curves and in different ranges. For large sets, Fourier
estimates for spheres
\cite{IR} and the point--sphere incidence theorem of Cilleruelo,
Iosevich, Lund, Roche-Newton and Rudnev \cite{CILRR} give
further bounds. Phuong, Pham and Vinh \cite{PPV} proved incidence
bounds for generalized spheres defined by diagonal polynomials.
Koh and Pham \cite{KP} obtained point--sphere bounds in odd dimensions
under conditions on the field and the radii. These estimates depend
on the geometry of the particular families.

\subsection{Semi-algebraic graphs and the main results}

A bipartite graph $G=(P,Q,E)$ with $P,Q\subset\mathbb R^2$ is
semi-algebraic of description complexity at most $t$ if membership in
$E$ is determined by a Boolean combination of at most $t$ polynomial
inequalities in the four coordinates, each of total degree at most $t$.
The graph is $K_{k,k}$-free if there are no $k$ vertices in each class
with all $k^2$ edges between them.
Fox, Pach, Sheffer, Suk and Zahl \cite[Theorem~1.1]{FPSSZ} proved that
every such $K_{k,k}$-free graph, with $|P|=m$ and $|Q|=n$, satisfies
\begin{equation}\label{eq:real}
 |E|\le C(t,k)\bigl((mn)^{2/3}+m+n\bigr).
\end{equation}
Thus, the planar semi-algebraic bound has the same exponent as the
point--line theorem.

Over an arbitrary field, we use Boolean combinations of polynomial
equations. Milojevi\'c, Sudakov and Tomon
\cite[Theorem~1.8]{MST} proved Zarankiewicz-type bounds for algebraic
graphs over arbitrary fields. For $K_{k,k}$-free graphs on two planar
vertex sets of size $N$, their result gives $O_{t,k}(N^{3/2})$ edges.
Our main result extends~\eqref{eq:real} to this setting, with the
additional characteristic term in Lewko's point--line bound. In
particular, we obtain $O_{t,k}(N^{4/3})$ when $N\le p^{3/2}$.

Fix positive integers $t$ and $k$. We say that a bipartite graph on finite
sets $P,Q\subset \F^2$ has \emph{algebraic description complexity at most
$t$} if there are $s\le t$ polynomials
$f_i\in \F[x_1,x_2,y_1,y_2]$ of total degree at most $t$ and a Boolean
function $\Psi:\{0,1\}^s\to\{0,1\}$ such that
\[
 (x,y)\in E\quad\Longleftrightarrow\quad
 \Psi\bigl([f_1(x;y)=0],\ldots,[f_s(x;y)=0]\bigr)=1.
\]
The allowed predicates are $f_i=0$ and $f_i\ne0$; no order or sign
condition is used.

Our first theorem reads as follows.
\begin{theorem}[Boolean algebraic bound]\label{thm:main}
There is a constant $C(t,k)$ such that every $K_{k,k}$-free graph of algebraic
description complexity at most $t$, with $|P|=m$ and $|Q|=n$, satisfies
\begin{equation}\label{eq:main}
 |E|\le C(t,k)\left((mn)^{2/3}+m+n+\frac{mn}{p}\right).
\end{equation}
One may take $C(t,k)\le C_{\mathrm{abs}}k^2t^{13}$ for an absolute constant $C_{\mathrm{abs}}$.
In particular, this holds for $P,Q\subset\Fp^2$.
\end{theorem}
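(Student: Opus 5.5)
We reduce Theorem~\ref{thm:main} to an incidence bound between points and distinct geometrically irreducible components of a two-parameter algebraic family, the second main result announced in the abstract. That bound is then proved by adapting Lewko's interpolation and contact-order argument.

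\textbf{Step 1 (reduction to curves).} Write $\Psi$ in disjunctive normal form. Each edge $(x,y)$ then satisfies one of at most $2^t$ conjunctions of the form
\[
\{f_i(x;y)=0,\ i\in S\}\cup\{f_j(x;y)\neq 0,\ j\notin S\}.
\]
It suffices to bound the edges of each type. Fix a type with $S\neq\emptyset$ and $y\in Q$. The set $Z_y=\{x: f_i(x;y)=0,\ i\in S\}$ is a planar algebraic set of degree at most $t$. It splits into at most $t^2$ isolated points (by B\'ezout) and at most $t$ geometrically irreducible curve components. Edges landing on the isolated points contribute $O(t^2 n)$.

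The case $S=\emptyset$ needs separate treatment. There the edge set is the complement of a variety, and the $K_{k,k}$-free hypothesis forces the graph to be small. Concretely, a $K_{k,k}$-free graph whose edges are defined by $\prod_j f_j\neq0$ has $O_{t,k}(m+n)$ edges. I would prove this by a Kővári--Sós--Turán argument combined with the fact that a nonzero polynomial vanishes on few points of a large grid. Alternatively, one can observe that the non-edges form an algebraic graph of the previous kind.

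For curve components $\gamma$ of $Z_y$, only the points of $P\cap\gamma$ off the hypersurfaces $f_j(\cdot;y)=0$ count. If the generic condition fails on $\gamma$, the pair contributes nothing. Otherwise the excluded points number $O(t^2)$ by B\'ezout.

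The curves $\gamma$ arising this way are components of members of the family parametrized by $y\in\F^2$, so they form a two-parameter family. Repeated curves must be handled using $K_{k,k}$-freeness. If a component $\gamma$ arises from at least $k$ distinct $y$ and contains at least $k$ points of $P$, we would get a $K_{k,k}$, after discarding the $O(t^2)$ exceptional points per pair. Hence the multiplicity of a curve can exceed $k$ only if the curve carries fewer than roughly $k+t^2$ points. This yields $|E|\ll t^2 k\, I(P,\CC)+O(k t^{O(1)}(m+n))$, where $\CC$ is a set of at most $tn$ distinct components and $I(P,\CC)$ counts point--curve incidences.

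\textbf{Step 2 (incidence bound for the family).} We must show $I(P,\CC)\ll_t (m|\CC|)^{2/3}+m+|\CC|+m|\CC|/p$. I expect this to be the main obstacle. Following Lewko, we interpolate a polynomial $g$ of degree $D\approx (m^2/n)^{1/3}$, or rather $\min(D,p-1)$, vanishing to high order along the points, so that each rich curve has high-order contact with $Z(g)$. If a curve $\gamma\in\CC$ is not a component of $Z(g)$, then the intersection multiplicities of $g|_\gamma$ at the rich points exceed $\deg\gamma\cdot D$, which contradicts B\'ezout. Curves that are components of $Z(g)$ number at most $D$, and we handle them by induction or by the trivial bound.

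The key difficulty is replacing Lewko's use of lines, which are parametrized by derivatives along the line, with contact orders along singular or nonreduced curves. For this I would use Hasse derivations $\Der$ along the family's two parameters, together with the two-parameter condition. That condition ensures that through a generic point, the curves of the family form a one-parameter pencil whose tangent data at a point can be separated by resultants and discriminants. The characteristic enters here: Hasse derivatives of order below $p$ behave like ordinary derivatives, which forces $D<p$ and produces the $mn/p$ term. Bad points, namely singular points and points where the discriminant vanishes, are removed via a discriminant polynomial of degree $O_t(1)$, at a cost of $O(m+n)$.

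\textbf{Step 3.} Optimize $D$, sum over the $2^t$ types, and track constants to obtain $C_{\mathrm{abs}}k^2t^{13}$.
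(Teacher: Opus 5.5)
Your treatment of edges lying on components of nonzero atom fibers follows the paper: you split by whether the generic condition holds on the component, and use $K_{k,k}$-freeness to cap the number of such parameters on a rich curve. The gap is in the edges where adjacency holds \emph{generically}, which you do not handle correctly.

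First, for $S\neq\emptyset$ you describe $Z_y=\bigcap_{i\in S}V(f_i(\cdot;y))$ as at most $t$ curves and $t^2$ points. This fails when every $f_i(\cdot;y)$ with $i\in S$ vanishes identically in $x$. The atoms are arbitrary, so this can happen for every $y\in Q$: take $f_1=y_1$, $\Psi=[f_1=0]\wedge[f_2\neq0]$ and $Q\subset\{y_1=0\}$. For such parameters the type-$S$ edges form the complement of a curve, exactly as in your case $S=\emptyset$.

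Second, your argument for that complement case does not give a linear bound. K\H{o}v\'ari--S\'os--Tur\'an only gives $O(nm^{1-1/k}+m)$, $P$ is not a grid, and an upper bound on the non-edges says nothing about the edges.

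The missing idea is the paper's covering step. Put $\varepsilon(y)=\Psi([f_1(\cdot;y)\equiv0],\ldots,[f_s(\cdot;y)\equiv0])$, the adjacency value off the nonzero atom curves of $y$. Suppose $\varepsilon(y)=1$ for at least $k$ parameters, and choose $k$ of them.
\begin{itemize}
\item Every point off their nonzero atom curves is adjacent to all $k$. So all but $k-1$ points of $P$ lie on at most $kt^2$ components of these fibers.
\item On each such component $\Lambda$, every parameter is adjacent either to all but at most $t^3$ points of $P\cap\Lambda$, or to at most $t^3$ of them.
\item If $|P\cap\Lambda|\ge k(t^3+1)$, fewer than $k$ parameters are of the first kind.
\end{itemize}
This bounds all edges at parameters with $\varepsilon(y)=1$ by $O(k^2t^2m+k^2t^5n)$. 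Identically vanishing atoms are absorbed automatically, and no incidence theorem is needed for this part.

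There are two smaller issues.
\begin{itemize}
\item Summing per-type bounds over the $2^s$ DNF types costs a factor up to $2^t$, so as written it does not give $C(t,k)\le C_{\mathrm{abs}}k^2t^{13}$. The paper avoids types and works with pairs $(y,\Lambda)$, where $\Lambda$ ranges over the set $\CC_Q$ of components of nonzero atom fibers. For each pair only the generic truth vector on $\Lambda$ can be full, so the rich-curve term $(k-1)I(P,\CC_Q)$ is paid only once.
\item Your Step~2 sketch would not prove the incidence bound. When $m\ll n^2$, which is exactly the range where $(mn)^{2/3}$ dominates $m$, a polynomial of degree $(m^2/n)^{1/3}$ has $o(m)$ coefficients. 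In general it cannot even vanish at all the points, let alone to high order. Lewko's method, as adapted in the paper, interpolates on the curve side by $S(x,a)$ with $\deg_xS\le d$ and $\deg_aS\le 2tn/(d+2)$ (Lemma~\ref{lem:interpolation}). It then converts excess contact into parameter root counts (Lemmas~\ref{lem:norm} and~\ref{lem:jets}).
\end{itemize}
For Theorem~\ref{thm:main} you should simply invoke Theorem~\ref{thm:pure} for each original atom $f_i$, as the paper does. Assign each curve of $\CC_Q$ to one atom in whose nonzero fiber it occurs.
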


For $f\in \F[x_1,x_2,y_1,y_2]$, let $\family_{\F}(f)$ consist
of the geometrically irreducible plane curves that occur as components of
\[
 Z_y(f)=\{x\in\overline{\F}^2:f(x;y)=0\},\qquad
 y\in \F^2,\quad f(\,\cdot\,;y)\not\equiv0.
\]
Thus, identically zero fibers are excluded, but nonzero fibers may be
reducible or nonreduced.
For an extension field $L/\F$, we define $\family_L(f)$ in the same
way, allowing parameters in $L^2$ and components over $\overline L$.

Our second theorem is the following.

\begin{theorem}[Distinct components of a two-parameter family]\label{thm:pure}
For every $t$, there is a constant $C_t$ with the following property. If
$\deg f\le t$, $P\subset \F^2$ has $m$ elements, and $\CC\subset\family_{\F}(f)$
consists of $n$ distinct curves, then
\begin{equation}\label{eq:pure}
 I(P,\CC):=\sum_{\Lambda\in\CC}|P\cap\Lambda|
 \le C_t\left((mn)^{2/3}+m+n+\frac{mn}{p}\right).
\end{equation}
One may take $C_t\le C_{\mathrm{abs}}t^{11}$ for an absolute constant $C_{\mathrm{abs}}$.
\end{theorem}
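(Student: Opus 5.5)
We follow Lewko's interpolation and contact-order argument, replacing lines by the components in $\CC$. First we reduce to a clean setting. Pass to $\overline{\F}$; the bound is unaffected. By B\'ezout, two distinct curves of degree at most $t$ meet in at most $t^2$ points. Hence the incidence graph between $P$ and $\CC$ is $K_{t^2+1,2}$-free, and by symmetry in the two-parameter structure it is also $K_{2,c_t}$-free. Counting pairs of points on a common curve gives the trivial bounds $I\ll_t m n^{1/2}+n$ and $I\ll_t n m^{1/2}+m$. These already give \eqref{eq:pure} when $m\ge n^2$ or $n\ge m^2$, so we may assume $n^{1/2}\le m\le n^2$. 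We then use a standard dyadic/pruning step: discard curves containing fewer than $r\approx C I/n$ points of $P$ and points on fewer than $s\approx CI/m$ curves, which loses at most half the incidences. After this, every curve is $r$-rich and every point is $s$-rich.

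\textbf{Interpolation.} Choose a nonzero polynomial $g\in\overline{\F}[x_1,x_2]$ of degree $D$ vanishing to order $\ell$ at every point of $P$. This is possible as long as $\binom{D+2}{2}>m\binom{\ell+1}{2}$, so we take $D\approx \ell\sqrt{m}$. Take $\ell<p$, so that Hasse derivatives behave like ordinary derivatives along a smooth branch. Restrict $g$ to a component $\Lambda\in\CC$. If $g|_\Lambda\not\equiv0$, then B\'ezout counting multiplicities gives
\[
\sum_{q\in P\cap\Lambda}\operatorname{mult}_q(g|_\Lambda)\le tD.
\]
Each $q\in P\cap\Lambda$ contributes at least $\ell$. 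Hence $r\ell\le tD\approx t\ell\sqrt m$, which is impossible once $r\gg t\sqrt m$. This case is the Cauchy--Schwarz regime. So in the interesting regime we must use the family structure, which is Lewko's contact-order idea. Through a point $q$, the curves of the family form a one-parameter subfamily, parametrised by $y$ on the curve $\{y: f(q;y)=0\}$. For $\Lambda=Z_y$ through $q$, we consider the order of contact of $\Lambda$ with $Z(g)$ at $q$. This is governed by the vanishing at $y$ of auxiliary polynomials $h_j(q;y)$, obtained by differentiating $g$ along the branch of $Z_y$. Implicit differentiation expresses these as polynomials in $y$ of degree $O_t(j)$ after clearing powers of $\partial f/\partial x$. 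Vanishing of $g$ to order $\ell$ at $q$ forces contact order at least $\ell$ with every curve through $q$. On the dual side, we interpolate a polynomial in $y$ on the parameter set so that, conversely, a rich point forces high contact. Comparing the two yields a polynomial identity of degree $O_t(\ell)$ in $y$ on a curve through $q$ that vanishes at $s$ parameters. This is contradictory once $s\gg_t \ell$, provided $\ell<p$ so that no Frobenius degeneracy occurs. Balancing $D\approx\ell\sqrt m$ with the richness parameters gives $I\ll_t (mn)^{2/3}$ when $\ell$ can be chosen below $p$. The constraint $\ell<p$ fails exactly when the needed $\ell\approx (n^2/m)^{1/3}$ or similar exceeds $p$. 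In that range we use $\ell=p-1$, and the same counting gives $I\ll_t mn/p$.

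\textbf{Degenerate members.} Singular and nonreduced members must be handled separately. We replace $f$ by a squarefree part relative to the generic fibre, and we remove the $O_t(1)$-dimensional bad locus of parameters, namely those where $Z_y$ has a multiple component or where $\partial f/\partial x$ vanishes identically on a component. These components are handled by passing to a reduced defining polynomial of degree at most $t$ in the parameter-free setting. Wild characteristic-$p$ phenomena, such as $f$ being a $p$th power in $x$, are absorbed into the $mn/p$ term or removed by a Frobenius twist, since $\ell<p$.

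\textbf{Main obstacle.} We expect the main obstacle to be the contact-order step. One must show that the derivative conditions $h_j(q;y)$ have degree $O_t(j)$ in $y$ and do not vanish identically on the parameter curve through $q$, uniformly over singular points and nonreduced members, and that distinct components give genuinely distinct parameter conditions. We would first try to prove a Wronskian-type nondegeneracy lemma: for a geometrically irreducible parameter curve and $j<p$, the contact polynomials are nonzero unless $Z(g)$ contains $\Lambda$. Tracking the degrees through implicit differentiation is what produces the stated constant $C_t\le C_{\mathrm{abs}}t^{11}$.
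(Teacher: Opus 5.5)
There is a genuine gap at the core of the proposal: no step in it can produce the exponent $2/3$.

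\emph{The interpolant carries no parameter information.} Interpolating $g$ to order $\ell$ at the points of $P$ gives, by B\'ezout, only $r\ll_t\sqrt m$ on curves where $g\not\equiv0$, whatever $\ell$ is. On curves contained in $Z(g)$ the contact order is infinite and tells you nothing. On the dual side, $g$ vanishes to order $\ell$ at $q$ independently of $y$. So every condition $h_j(q;y)=0$ with $j<\ell$ holds identically on $\Gamma_q$, and cannot separate the parameters in $Q$ from the rest of the dual curve. The ``polynomial in $y$'' that should vanish at the $s$ incident parameters without vanishing identically is never constructed.

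\emph{What the paper does instead.} The interpolant is $S(x,a)$ with $\deg_xS\le d$ and $\deg_aS\le e=\lfloor 2tn_Q/(d+2)\rfloor$. It vanishes on each \emph{curve} $Z_y$ after setting $a=a_y$ (Lemma~\ref{lem:interpolation}). Then $(\Der_f^jS)(q;a,b)$ vanishes at every $y\in Q\cap C$. It is not identically zero on $C$ when $j=\mu_q$, the contact order of $S(\cdot,z)$ with the generic member along $C$. This gives $|Q\cap C|\le te+t(t-1)\mu_q$ (Lemma~\ref{lem:jets}), which is roughly what your ``Wronskian lemma'' would supply.

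\emph{The missing ingredient.} You have no \emph{global} contact bound $\sum_q\epspart{\mu_q-c_*}\le rd(2t+d)$, which is Lemma~\ref{lem:norm}. It is proved with the resultant $\Res_B(\Phi,\Der_\Phi F)$ and B\'ezout. The key step shows, via Lemma~\ref{lem:kernel} and a field-degree bound $\le t\deg g<p$, that any factor $g$ of $F$ dividing its own resultant is a family member or a component of the leading-coefficient or discriminant locus. This is also why the characteristic restriction must be $\deg_xS<p/t$, not $\ell<p$. Proposition~\ref{prop:cutoff} gives contact order $p$ at arbitrarily many points once the degree reaches $\lceil p/t\rceil$, and your $D\approx\ell\sqrt m$ can far exceed $p$.

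\emph{The preliminary reductions also fail.} The incidence graph need not be $K_{2,c_t}$-free. In Remark~\ref{rem:twoparameters}, infinitely many distinct members of $x_2=(a+bx_1)x_1(x_1-1)$ pass through $(0,0)$ and $(1,0)$. This is not confined to $B_f$. For $x_2=bx_1(x_1-1)+a$, every member with $a=0$ contains both points, yet $f$ restricted to either point is $-a\not\equiv0$. Hence the bound $I\ll_t mn^{1/2}+n$ and the reduction to $m\le n^2$ are unjustified. The paper instead removes $B_f$ and the base curves, bounds nonessential dual components, and colors $P$ into $t^4$ classes with bounded codegree (Lemmas~\ref{lem:nonessential} and~\ref{lem:twins}).

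Similarly, deleting ``bad parameters'' does not handle points that are singular on every member along a whole dual curve (Remark~\ref{rem:persistent}). The smoothness needed for both $\mu_q$ and the root count comes only after confining such points to the fixed curve $V(A_f)$ of Lemma~\ref{lem:W}.
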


The point--line estimate~\eqref{eq:lewko} follows by taking
$f(x_1,x_2;a,b)=x_2-a x_1-b$, which parameterizes the nonvertical lines
and has total degree $2$. Distinct parameters give distinct lines, and
the incidence graph is $K_{2,2}$-free. Vertical lines contribute at most
$m$ further incidences. The characteristic term is already necessary
in this example: the $p^2$ points of $\Fp^2$ and the $p^2$ lines
$x_2=ax_1+b$ determine $p^3$ incidences.

Over $\mathbb R$, Theorem~\ref{thm:main} is a special case of the
semi-algebraic theorem of Fox, Pach, Sheffer, Suk and Zahl
\cite{FPSSZ}. For complex curves with two degrees of freedom,
Sheffer, Szab\'o and Zahl \cite[Theorem~1.3]{SSZ} proved the bound
$O_{\varepsilon,t,s}(m^{2/3+\varepsilon}n^{2/3}+m+n)$, where $s$
bounds both the number of curves through two points and the number
of common points of two curves. A two-parameter polynomial family
need not satisfy this degrees-of-freedom condition; see
Remark~\ref{rem:twoparameters}. Our structural lemmas allow us to treat
these families. The proof is
algebraic and applies over arbitrary fields. In positive characteristic,
it gives the exponent $2/3$ with the additional term $mn/p$.

Related bounds for zeros of non-Cartesian polynomials on products of
two-dimensional sets were proved by Nassajian Mojarrad, Pham,
Valculescu and de Zeeuw \cite{NMPVdZ}, with an $\varepsilon$ loss over
$\mathbb C$ and no such loss over $\mathbb R$. Both their result and
Theorem~\ref{thm:pure} include point--line incidences.
Section~\ref{sec:cartesian} explains the distinction between counting
parameters and counting distinct irreducible components, and deduces
the non-Cartesian bound without an $\varepsilon$ loss, with the
additional term $mn/p$.

Section~\ref{sec:applications} gives bounds for rich components,
translates of a fixed curve, polynomial values on difference sets, and
polynomial expansion. For a polynomial $F\in \F[U,V]$ of degree $D<p$
that is not a polynomial in one linear form, we prove
\[
 |F(P-P)|\gg_D\min\{|P|^{2/3},p\}
\]
whenever $P$ is not contained in a union of $D-1$ parallel lines.
For homogeneous forms and polynomials of degree two or three, we obtain
the same bound under weaker geometric hypotheses.
These results extend the usual distance bound to polynomial functions.
We compare the range $|P|\le p^{3/2}$ with the pinned-distance theorem of
Murphy, Petridis, Pham, Rudnev and Stevens~\cite{MPPRS} over general
fields. We also discuss their stronger results for the usual distance
over $\Fp$.
The section ends with examples showing the sharpness of the incidence
bound and the role of the two-parameter assumption. We do not optimize
the dependence on $t$ in the two main theorems.

\paragraph{Main ideas.}
We build on the method developed by Lewko \cite{Lewko}, which combines
polynomial interpolation in an additional parameter with estimates for
contact multiplicities. We retain this scheme and the final optimization
of the interpolation degree. For general algebraic families, the
additional steps concern shared components, singularities, and
exceptional factors in the contact estimate.
We first account for components that occur for infinitely many parameters.
We then bound the contribution from nonessential dual components
and partition the remaining points into classes with bounded numbers of
common neighbors (Lemmas~\ref{lem:nonessential} and~\ref{lem:twins}).
Persistent singularities are confined to a fixed curve of bounded degree
(Lemma~\ref{lem:W}). In the contact estimate, we show that every
exceptional factor defines a component of a family member or of the
zero set of the leading coefficient or discriminant in the family parameter
(Lemma~\ref{lem:norm}). Iterated tangent derivatives then relate contact
orders to the number of incident parameters (Lemma~\ref{lem:jets}).
Finally, the $K_{k,k}$-free hypothesis bounds the number of neighborhoods
containing almost all the points of a curve, which gives the reduction
to Boolean combinations of equations. The argument permits singular
and nonreduced members of the original family.

More precisely, for an integer $d$ with $1\le d<p/t$, interpolation gives
a polynomial of degree at most $d$ in the point variables $x$ and degree
$e=O_t(n/d)$ in one parameter variable.
The contact estimate bounds the total excess contact by $O_t(d^2)$,
while the structural reductions control the exceptional incidences.
Together they give
\[
 I(P,\CC)\ll_t \frac{mn}{d}+m+n+d^2.
\]
Balancing the first and last terms gives the exponent $2/3$.
In positive characteristic, imposing the restriction $td<p$ produces
the additional term $mn/p$.

Section~\ref{sec:preliminaries} fixes notation and states the preliminary
lemmas. We prove the structural lemmas in
Section~\ref{sec:structureproofs}, the contact bound in
Section~\ref{sec:norm}, and the interpolation and derivative estimates
in Section~\ref{sec:jets}. Sections~\ref{sec:assembly}
and~\ref{sec:boolean} prove Theorems~\ref{thm:pure}
and~\ref{thm:main}, respectively. Section~\ref{sec:applications}
contains the applications and sharpness examples. The appendix shows
the necessity of the degree restriction in the contact bound.

\section{Preliminary lemmas}\label{sec:preliminaries}

We write $V(h)$ for the affine zero set of a polynomial $h$. The letters
$\Lambda$ and $C$ denote curves in the point plane and parameter plane,
respectively. We write $q$ for a point and $y$ for a parameter; after a change of
parameter coordinates, $y=(a,b)$. The sets $P$ and $Q$ consist of points
and parameters, respectively, and $E$ denotes the edge set of a bipartite graph.
All implied constants depend only on $t$, unless another dependence is
indicated.
Fix an algebraic closure $\kk=\overline{\F}$. Passing from $\F$ to $\kk$
preserves incidences and Boolean truth values on the original finite sets.
We work over $\kk$ in Sections~\ref{sec:structureproofs}--\ref{sec:boolean}.
In $\family_L(f)$, the subscript specifies the field in which parameters
are chosen. We use $\CC$ for a finite collection of distinct components.
In particular, $\family_{\F}(f)\subseteq\family_\kk(f)$, so a bound for
the latter family applies to the given curves. By the definition of
$\family_{\F}(f)$, each such curve is a component of a fiber with parameter
in $\F^2$. We may then factor polynomials and change parameter coordinates
over $\kk$ without changing the incidences on the given finite sets.

Both main bounds are immediate if one of the finite sets is empty. If $p\le2t^2$, the trivial estimate
$I\le mn\le2t^2mn/p$ proves both main bounds. Thus, when proving those
bounds, we may assume
\begin{equation}\label{eq:largechar}
 p>2t^2\quad\hbox{or}\quad p=\infty.
\end{equation}
The lemmas below state their own characteristic assumptions when needed.

Lemmas~\ref{lem:bezout} and~\ref{lem:kernel} are standard algebraic
facts; we include short proofs for completeness. The remaining lemmas
are stated in the forms needed for the present argument. In particular,
Lemma~\ref{lem:interpolation} uses a standard dimension count, while
the contact and derivative estimates adapt Lewko's method \cite{Lewko}
to the curve families considered here.
The proofs of Lemmas~\ref{lem:bezout}--\ref{lem:W} are given in
Section~\ref{sec:structureproofs}, the proof of Lemma~\ref{lem:norm}
in Section~\ref{sec:norm}, and the proofs of
Lemmas~\ref{lem:interpolation}--\ref{lem:jets} in Section~\ref{sec:jets}.

\subsection{Algebraic facts}

We use the affine length form of B\'ezout's theorem; see
Fulton~\cite[Sections~3.3 and~5.3]{Fulton}. This form controls local
intersection multiplicities as well as the number of intersection points.

\begin{lemma}[Affine B\'ezout length]\label{lem:bezout}
Let $H_1,H_2\in L[X,Y]$ be nonzero coprime polynomials of degrees $a,b$ over
any field $L$. Then
\[
 \dim_L L[X,Y]/(H_1,H_2)\le ab.
\]
Over an algebraically closed field, the sum of their local intersection
multiplicities, and hence the number of their common points, is at most
$ab$.
\end{lemma}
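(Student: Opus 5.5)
The plan is the standard dimension-count proof of affine Bézout: bound the dimension of the truncated quotient of $L[X,Y]$ by $(H_1,H_2)$ in each degree, then let the degree tend to infinity. Write $R=L[X,Y]$ and $R_{\le N}$ for the polynomials of degree at most $N$, so $\dim_L R_{\le N}=\binom{N+2}{2}$. I would show that for every $N$ the image of $R_{\le N}$ in $R/(H_1,H_2)$ has dimension at most $ab$. Every element of the quotient lies in such an image for $N$ large, and any finitely many linearly independent classes already live in one image. So this gives $\dim_L R/(H_1,H_2)\le ab$ and in particular shows the quotient is finite-dimensional.

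\textbf{Counting step.} Fix $N\ge a+b$ and consider
\[
\phi:R_{\le N-a}\times R_{\le N-b}\to R_{\le N},\qquad (A,B)\mapsto AH_1+BH_2 .
\]
Its image lies in $(H_1,H_2)\cap R_{\le N}$. If $AH_1+BH_2=0$, then coprimality and unique factorization in $R$ give $H_2\mid A$, so $A=CH_2$ and $B=-CH_1$ with $\deg C\le N-a-b$. Hence the kernel has dimension at most $\binom{N-a-b+2}{2}$.

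The image of $R_{\le N}$ in the quotient therefore has dimension at most
\[
\binom{N+2}{2}-\binom{N-a+2}{2}-\binom{N-b+2}{2}+\binom{N-a-b+2}{2}.
\]
A direct expansion shows this equals $ab$ exactly; it is the second finite difference of a quadratic. This is where care is needed: one must use the image of $\phi$ only as a subspace of the kernel of $R_{\le N}\to R/(H_1,H_2)$, and never needs equality. I will also take $a,b\ge 1$. If either polynomial is a nonzero constant, the ideal is the unit ideal and the quotient is $0$, so the bound is trivial.

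\textbf{Algebraically closed case.} The quotient is now finite-dimensional, so $V(H_1,H_2)$ is finite. By the Chinese remainder theorem for Artinian rings, $R/(H_1,H_2)\cong\prod_{q}\mathcal O_q/(H_1,H_2)$, with the product over the common zeros $q$. Here the local intersection multiplicity is $I_q(H_1,H_2)=\dim\mathcal O_q/(H_1,H_2)$, as in Fulton. Summing gives $\sum_q I_q\le ab$. Each common point contributes at least $1$, so the number of common points is also at most $ab$.

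The only genuinely delicate points are the kernel computation, which uses coprimality in the UFD $R$, and the identification of the affine quotient with the product of the local rings. For the latter I would cite Fulton, Section~2.9 (Proposition~6), rather than reprove it.
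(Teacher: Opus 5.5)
Your proof is correct and follows the paper's argument essentially verbatim. Your kernel computation for $\phi$ is the paper's identity $H_1\Pi_{h-a}\cap H_2\Pi_{h-b}=H_1H_2\Pi_{h-a-b}$ in different language. The remaining steps also match the paper: the second-difference count giving $ab$, the passage to the increasing union of images, and the decomposition of the finite-dimensional algebra into local Artinian factors in the algebraically closed case.
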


The positive-characteristic assertion in the next lemma also follows
from the description of the kernel of the universal derivation in
\cite[Lemma~10.158.2, Tag~031W]{Stacks}.

\begin{lemma}[Kernel of a function-field derivation]\label{lem:kernel}
Let $M$ be a function field of one variable over an algebraically closed
field $\Omega$, and let $\Der$ be a nonzero $\Omega$-derivation of $M$.
In characteristic zero, $\ker \Der=\Omega$. In characteristic $p>0$,
$\ker \Der=M^p$.
\end{lemma}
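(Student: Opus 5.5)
The plan is to exploit the fact that $M$ is a finite separable extension of a purely transcendental field $\Omega(x)$, and that a derivation on $M$ is determined by its value on a separating element. First I will reduce to a standard derivation. Since $\Omega$ is algebraically closed, hence perfect, $M/\Omega$ is separably generated, so there is $x\in M\setminus\Omega$ with $M/\Omega(x)$ finite separable. The $\Omega$-derivation $\partial=d/dx$ of $\Omega(x)$ extends uniquely to $M$, because separability lets one differentiate the minimal polynomial. Every $\Omega$-derivation $D$ of $M$ vanishes on $\Omega$ and satisfies $D=D(x)\,\partial$, since both sides agree on $\Omega(x)$ and extensions through a separable algebraic extension are unique. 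As $\Der\ne0$, we get $\Der(x)\neq0$, so $\ker\Der=\ker\partial$, and it suffices to treat $\partial$.

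In characteristic zero, $\ker\partial$ is a subfield containing $\Omega$. Any $z\in\ker\partial$ is algebraic over $\Omega(x)$, because $M$ has transcendence degree one. If $z\notin\Omega$, then $z$ is transcendental over $\Omega$, since $\Omega$ is algebraically closed in $M$, and $M/\Omega(z)$ is finite, hence separable in characteristic $0$. Then the unique extension of the zero derivation of $\Omega(z)$ to $M$ is zero, so $\partial=0$, which is a contradiction. Hence $\ker\partial=\Omega$. The uniqueness used here is the same separable-extension fact.

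In characteristic $p$, the Leibniz rule gives $M^p\subseteq\ker\partial$. For the converse, I will use the degree count $[M:M^p]=p$. This holds because $M^p\supseteq\Omega^p(x^p)=\Omega(x^p)$ and $M^p$ is the image of $M$ under Frobenius, giving $[M:\Omega(x^p)]=p\,[M:\Omega(x)]$ and $[M^p:\Omega(x^p)]=[M:\Omega(x)]$. Next I will show $M=M^p(x)$. The field $M$ is separable over $\Omega(x)$ and also purely inseparable over $M^p$, hence over $M^p(x)\supseteq\Omega(x)$. Thus $M$ is both separable and purely inseparable over $M^p(x)$, so the two fields are equal. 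Since $x\notin M^p$, which follows from $\partial x=1$, the degree count shows that $1,x,\dots,x^{p-1}$ is a basis of $M$ over $M^p$. If $z=\sum_{i<p}c_ix^i$ with $c_i\in M^p$ and $\partial z=0$, then $\sum_{i\ge1} ic_ix^{i-1}=0$. By linear independence and $i\not\equiv0 \pmod p$ for $1\le i\le p-1$, all $c_i$ with $i\ge1$ vanish, so $z=c_0\in M^p$.

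The main obstacle is justifying $[M:M^p]=p$ together with the existence of a separating transcendental element. Both are standard consequences of $\Omega$ being perfect, and I will cite them (or the Stacks reference above) rather than reprove them.
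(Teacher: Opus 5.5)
Your proof is correct. In characteristic zero it is essentially the paper's argument: a nonconstant kernel element $u$ is transcendental, $M/\Omega(u)$ is finite separable, and so the derivation vanishes on all of $M$. The preliminary reduction to $\partial=d/dx$ is not needed there.

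In characteristic $p$ you take a longer route than the paper. The paper uses only the degree count $[M:M^p]=p$, which it gets from an arbitrary transcendental $u$; no separability is needed for that count. It then observes three things about $\ker\Der$:
\begin{itemize}
\item it is a subfield, closed under inverses by the quotient rule;
\item it contains $M^p$;
\item it is proper, because $\Der\ne0$.
\end{itemize}
Since $[M:M^p]$ is prime, $\ker\Der$ must equal $M^p$.

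You instead choose a separating element $x$, show every $\Omega$-derivation equals $\Der(x)\,\partial$, prove $M=M^p(x)$, and compute $\ker\partial$ explicitly in the basis $1,x,\dots,x^{p-1}$. This gives extra structure: the $\Omega$-derivations of $M$ form a one-dimensional $M$-space spanned by $d/dx$, and $\{x\}$ is a $p$-basis. The cost is relying on a separating transcendental element and on unique extension of derivations along separable extensions. Those are exactly the facts you flagged as the main obstacle, and the paper's route avoids them entirely.

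Your own argument already contains the paper's shortcut. Once you have $[M:M^p]=p$ and $M^p\subseteq\ker\partial\subsetneq M$, the prime-degree observation finishes the proof at once. The separable-plus-purely-inseparable argument for $M=M^p(x)$ and the basis computation then become unnecessary.
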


\subsection{Components of a two-parameter family}\label{sec:structure}

We call a polynomial \emph{mixed} if it depends on both the point
variables $x=(x_1,x_2)$ and the parameter variables $y=(y_1,y_2)$.
In this subsection and the next, $f\in\kk[x_1,x_2,y_1,y_2]$ is a mixed
irreducible polynomial of total degree at most $t$. Set
\[
 \mathcal N_f=\{y:f(\,\cdot\,;y)\equiv0\},\qquad
 B_f=\{q:f(q;\,\cdot\,)\equiv0\},\qquad
 \Gamma_q=V(f(q;\,\cdot\,)).
\]
For $y\notin \mathcal N_f$, write $Z_y=Z_y(f)=V(f(\,\cdot\,;y))$.

\begin{lemma}[Common zeros and occurrence loci]\label{lem:occurrence}
If finitely many polynomials in $\kk[u,v]$ of degree at most $t$ have no
common nonconstant factor and are not all zero, their common zero set has
at most $t^2$ points. In particular,
\[
 |\mathcal N_f|\le t^2,\qquad |B_f|\le t^2.
\]
If $\Lambda$ is an irreducible component of some $Z_y$, the set
\[
 F_\Lambda=\{y\notin \mathcal N_f:\Lambda\subset Z_y\}
\]
is the complement of $\mathcal N_f$ in a set cut out by polynomials of degree at most
$t$. Its closure is a proper subset of the parameter plane. Indeed, equality would force an equation of $\Lambda$ to divide the
irreducible mixed polynomial $f$. If $F_\Lambda$ is finite, then
$|F_\Lambda|\le t^2$.
\end{lemma}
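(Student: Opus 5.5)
The first assertion reduces to B\'ezout. Let $g_1,\dots,g_r\in\kk[u,v]$ have degree at most $t$, no common nonconstant factor, and not all be zero. Discard the zero polynomials. If only one polynomial $g_1$ remains, the hypothesis forces $g_1$ to be a nonzero constant, so the common zero set is empty. Otherwise I will pick $g_1\neq0$ and factor it into irreducibles $\pi_1,\dots,\pi_s$. For each $j$, some $g_i$ is not divisible by $\pi_j$. I then take a generic linear combination $h=\sum_{i\ge2}\lambda_i g_i$, which has degree at most $t$. Since $\kk$ is infinite, and for each $j$ the set of coefficient vectors $\lambda$ with $\pi_j\mid h$ is a proper linear subspace, I can choose $\lambda$ so that $h$ is coprime to $g_1$. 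The subspace is proper because if $\pi_j$ divided every $g_i$ with $i\ge 2$, it would be a common factor, since it already divides $g_1$. The common zero set is contained in $V(g_1)\cap V(h)$, which has at most $t^2$ points by Lemma~\ref{lem:bezout}.

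For $\mathcal N_f$, I write $f=\sum_\alpha c_\alpha(y)x^\alpha$. Then $\mathcal N_f$ is the common zero set of the $c_\alpha\in\kk[y_1,y_2]$. These coefficients are not all zero, since $f$ depends on $x$. A common nonconstant factor $c(y)$ would divide $f$, and since $f$ is irreducible this forces $f$ to be an associate of $c$, so $f$ would not depend on $x$, contradicting mixedness. Hence $|\mathcal N_f|\le t^2$. The same argument with the roles of $x$ and $y$ exchanged gives $|B_f|\le t^2$.

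For $F_\Lambda$, let $\Lambda=V(\ell)$ with $\ell$ irreducible of degree at most $t$, since $\ell$ divides $f(\cdot;y_0)$. The condition $\Lambda\subset Z_y$ is equivalent to $\ell\mid f(\cdot;y)$. To make this polynomial in $y$, I reduce $f(x;y)$ modulo $\ell$. After a linear change of $x$-coordinates, $\ell$ is monic in $x_2$, and division in $\kk[y][x_1][x_2]$ yields a remainder $R(x;y)$ of $x_2$-degree less than $\deg\ell$. Its coefficients $r_\beta(y)$ are polynomials of degree at most $t$ in $y$, because $\ell$ has constant coefficients and division therefore never raises the $y$-degree. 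Then $\ell\mid f(\cdot;y)$ if and only if all $r_\beta(y)=0$. So $F_\Lambda$ is the zero set of the $r_\beta$ with $\mathcal N_f$ removed.

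If this zero set were dense, that is all of $\kk^2$, then every $r_\beta$ would vanish identically. Then $\ell(x)$ would divide $f$ in $\kk[x,y]$, so $f$ would be an associate of $\ell$ and would not depend on $y$, contradicting mixedness. Hence not all $r_\beta$ are zero, and the closure of $F_\Lambda$ is proper.

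Finally suppose $F_\Lambda$ is finite. Let $g=\gcd(r_\beta)$. If $g$ is nonconstant, $V(g)$ is a curve contained in $V(r_\beta)$, and removing at most $t^2$ points of $\mathcal N_f$ leaves infinitely many points, which is a contradiction. So the $r_\beta$ have no common factor, and the first part gives $|V(r_\beta)|\le t^2$, hence $|F_\Lambda|\le t^2$. The main point needing care is the degree bound on the $r_\beta$ in $y$, which holds because $\ell$ does not involve $y$.
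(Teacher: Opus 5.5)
Your proof is correct and follows the paper's argument step for step. The steps are a generic linear combination plus B\'ezout for the first assertion, coefficient gcds for $\mathcal N_f$ and $B_f$, $\kk$-linear equations in the coefficients of $f(\cdot;y)$ encoding $\ell\mid f(\cdot;y)$, and ruling out a nonconstant common factor of those equations in the finite case. The one cosmetic difference is that you get the equations by dividing by $\ell$ made monic in $x_2$. The paper instead reads them off from the fixed subspace $g_\Lambda\cdot\kk[x]_{\le t-a}\subset\kk[x]_{\le t}$, which avoids the coordinate change.
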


Call $\Lambda$ a \emph{base curve} if $F_\Lambda$ is infinite, and a
\emph{nonbase curve} otherwise. Thus, a base curve occurs as a component
for infinitely many parameter values; it need not be common to every
member of the family. Let $\Zess_y$ be the union of the nonbase
components of $Z_y$. For finite $P\subset\kk^2$ and
$Q\subset\kk^2\setminus \mathcal N_f$, put
\[
 \Iess(P,Q)=|\{(q,y)\in P\times Q:q\in\Zess_y\}|,
 \qquad r_Q^*(q)=|\{y\in Q:q\in\Zess_y\}|.
\]
This auxiliary incidence relation counts a point--parameter pair once,
even if the point belongs to several components of the same fiber.

For an irreducible parameter curve $C$, write $\eta_C$ for its generic
point and $\Omega_C=\overline{\kk(C)}$. A geometric component of
$f(x;\eta_C)=0$ is \emph{constant} if its equation is an
$\Omega_C^\times$ multiple of a polynomial in $\kk[x]$, and
\emph{nonconstant} otherwise. Define
\[
 E_C=\{q\in\kk^2:q\hbox{ lies on a nonconstant geometric component of }
                      f(x;\eta_C)=0\}.
\]
We say that $C$ is \emph{essential for $q$} if $q\in E_C$.
For $q\notin B_f$, any essential $C$ is an irreducible component of
$\Gamma_q$.
The generic polynomial $f(x;\eta_C)$ is nonzero, since $\mathcal N_f$ is finite.

\begin{lemma}[Constant points on a nonconstant curve]\label{lem:constantpoints}
Let $L\supset\kk$ be a field, and let $g\in L[x_1,x_2]$ be irreducible of
degree $a$, not a scalar multiple of a polynomial over $\kk$. Then
\[
 |V(g)\cap\kk^2|\le a^2.
\]
Consequently, $|E_C|\le t^3$ for every irreducible parameter curve $C$.
\end{lemma}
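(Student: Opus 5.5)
The plan is to show that if $g$ vanished at more than $a^2$ points of $\kk^2$, then $g$ would be forced into the $\kk$-structure. Choose a $\kk$-basis $\{\omega_j\}$ of the $\kk$-span of the coefficients of $g$ in $L$, and expand
\[
 g=\sum_j \omega_j g_j,\qquad g_j\in\kk[x_1,x_2],\quad \deg g_j\le a .
\]
Because the $\omega_j$ are $\kk$-linearly independent, a point $q\in\kk^2$ satisfies $g(q)=0$ exactly when $g_j(q)=0$ for every $j$. Thus $V(g)\cap\kk^2$ equals the common zero set of the $g_j$ in $\kk^2$.

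If the $g_j$ have no common nonconstant factor, Lemma~\ref{lem:occurrence}, applied with $t$ replaced by $a$, bounds this common zero set by $a^2$, which is the claim. Otherwise let $h\in\kk[x]$ be a common factor of positive degree. Then $h$ divides every $g_j$ in $\kk[x]$, hence $h$ divides $g$ in $L[x]$. Since $g$ is irreducible over $L$, $g$ is an $L^\times$-multiple of $h$. This contradicts the hypothesis that $g$ is not a scalar multiple of a polynomial over $\kk$. The main delicate point is making sure that divisibility over $\kk$ transfers to $L$, but this is immediate.

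For the consequence, fix an irreducible parameter curve $C$, and take $L=\Omega_C$. Since $\mathcal N_f$ is finite, $f(x;\eta_C)$ is a nonzero polynomial of degree at most $t$ in $x$. Factor it over $\Omega_C$ into geometric components $g_1,\dots,g_r$ of degrees $a_i$ with $\sum a_i\le t$. Each nonconstant component satisfies the hypotheses of the first part, so it contains at most $a_i^2$ points of $\kk^2$. Summing over the nonconstant components gives
\[
 |E_C|\le\sum_i a_i^2\le\Bigl(\sum_i a_i\Bigr)^2\le t^2\le t^3 .
\]
This is comfortably within the stated bound $t^3$.
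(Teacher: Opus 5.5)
Your proof is correct and follows the paper's argument: you use the same $\kk$-basis decomposition $g=\sum_j\omega_jg_j$, rule out a common factor of the $g_j$ by the irreducibility of $g$, and apply Lemma~\ref{lem:occurrence}. The only difference is in the consequence, where your estimate $\sum_i a_i^2\le(\sum_i a_i)^2\le t^2$ replaces the paper's cruder count of at most $t$ components with at most $t^2$ points each, giving a slightly sharper bound than the $t^3$ that is needed.
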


\begin{lemma}[Base curves through a point]\label{lem:base}
Every $q\in\kk^2\setminus B_f$ lies on at most $t^2$ base curves.
The constant components of $f(x;\eta_C)=0$ are exactly the irreducible
curves contained in every $Z_y$, $y\in C\setminus \mathcal N_f$; there are at most
$t$ of them.
\end{lemma}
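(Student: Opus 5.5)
We prove the second assertion first and then use it for the first.

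\emph{Constant components.} Let $C$ be an irreducible parameter curve and $h\in\kk[x]$ irreducible with $h\mid f(x;\eta_C)$ in $\Omega_C[x]$. The divisibility $h\mid f(x;\eta_C)$ can be tested over the field $\kk(C)$. Division by the monic-in-a-variable normalization of $h$ (after a linear change of point coordinates) expresses the remainder's coefficients as regular functions on a dense open subset of $C$. So the divisibility holds generically on $C$. Since containment $V(h)\subset Z_y$ is a closed condition in $y$, it then holds for every $y\in C\setminus\mathcal N_f$.

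Conversely, suppose an irreducible curve $V(h)$ is contained in every $Z_y$ with $y\in C\setminus\mathcal N_f$. Then $h\mid f(x;y)$ for all $y$ in a dense subset of $C$. Hence the remainder of $f$ modulo $h$, which is a polynomial in $x$ with coefficients in $\kk[y]$, vanishes on $C$. So $h\mid f(x;\eta_C)$, and $V(h)$ is a constant component. Distinct such $h$ are pairwise coprime factors of $f(x;\eta_C)$, whose degree in $x$ is at most $t$. Hence there are at most $t$ of them.

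\emph{Base curves through $q$.} Let $\Lambda=V(h)$ be a base curve through $q$. Then $F_\Lambda$ is infinite, and by Lemma~\ref{lem:occurrence} its closure is a proper closed subset. So the closure contains some irreducible curve $C$ on which $\Lambda\subset Z_y$ for a dense set of $y$, hence for all $y\in C\setminus \mathcal N_f$. By the previous step, $\Lambda$ is a constant component of $f(x;\eta_C)=0$.

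Since $q\in\Lambda\subset Z_y$ for all $y\in C$, we have $C\subset\Gamma_q$. Because $q\notin B_f$, the polynomial $f(q;\cdot)$ is nonzero of degree at most $t$, so $C$ is one of its irreducible components. Let $\deg C=c_C$. Then $\sum_C c_C\le t$, and in particular $\Gamma_q$ has at most $t$ components.

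For each such $C$, the constant components of $f(x;\eta_C)$ are at most $t$ in number, by the bound above. Therefore the number of base curves through $q$ is at most $t\cdot t=t^2$. Here we use that each base curve through $q$ is attached to at least one component $C$ of $\Gamma_q$, so summing over components does not undercount.

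\emph{Main obstacle.} The delicate step is passing from ``$\Lambda\subset Z_y$ for infinitely many $y$'' to ``$\Lambda$ is a constant component along an entire curve $C$''. Infinitely many points of a closed set of dimension at most one need not lie on a single curve unless we take the closure and pick a one-dimensional component. The closed condition is given by polynomial equations of degree at most $t$ in $y$, namely the coefficients of the remainder. So the closure of $F_\Lambda$ is that closed set, whose finite part is bounded by Lemma~\ref{lem:occurrence}. Its infinite part therefore contains a curve component, and the generic-point argument applies on that component.
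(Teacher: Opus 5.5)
Your proposal is correct and follows essentially the paper's argument. The remainder (coefficient) conditions characterize constant components as the curves contained in every $Z_y$ along $C$; there are at most $t$ of them because they are distinct factors of one nonzero polynomial of degree at most $t$; and each base curve through $q$ is attached to a curve component of $\overline{F_\Lambda}$, which must be one of the at most $t$ components of $\Gamma_q$. The only differences are cosmetic: you divide by a monic normalization of $h$ instead of using the linear-algebra coefficient conditions of Lemma~\ref{lem:occurrence}, and you count factors of the generic member rather than of one specialized member.
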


\begin{lemma}[Nonessential dual components]\label{lem:nonessential}
Let $q\notin B_f$ and let $C$ be a component of $\Gamma_q$ which is not
essential for $q$. Then
\begin{equation}\label{eq:nonessential}
 |\{y\in C\setminus \mathcal N_f:q\in\Zess_y\}|
 \le t\deg C\le t^2.
\end{equation}
\end{lemma}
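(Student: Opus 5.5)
Since $C$ is not essential for $q$, the point $q$ lies on no nonconstant geometric component of $f(x;\eta_C)=0$. Thus every geometric component of the generic fiber through $q$ is constant, and by Lemma~\ref{lem:base} these are exactly the irreducible curves contained in every $Z_y$ with $y\in C\setminus\mathcal N_f$. Each of them lies in $Z_y$ for infinitely many $y$, so it is a base curve. The idea is that, for $y\in C$ outside a finite set, the fiber $Z_y$ near $q$ is just a specialization of the generic fiber. So $q$ can lie on a nonbase component of $Z_y$ only for the finitely many $y$ at which the specialization degenerates. The task is to count these degenerate parameters by a B\'ezout-type argument.

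The first step factors out the constant part. Over $\kk(C)$, write $f(x;\eta_C)=c\,g_0(x)\,h(x;\eta_C)$. Here $g_0\in\kk[x]$ collects the constant components with multiplicity, and $h$ collects the nonconstant ones. The hypothesis gives $h(q;\eta_C)\neq0$: otherwise $q$ would lie on some geometric component of $h(x;\eta_C)=0$, and that component is nonconstant. To specialize, I would clear denominators. Choose a polynomial $H(x;y)$, with $y$-coefficients taken modulo the ideal of $C$, such that $f(x;y)\equiv g_0(x)H(x;y)$ on $C$; this is possible because $g_0$ divides $f$ restricted to $C$ in $\kk[C][x]$ up to units, by Gauss's lemma applied over the coordinate ring, with some care at singular points. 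Then $\deg_x H\le t$, and we may choose the coefficients of $H$ of degree at most $t$ in $y$.

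Next, for $y\in C\setminus\mathcal N_f$ with $q\in\Zess_y$, the point $q$ lies on a nonbase component $\Lambda$ of $Z_y=V(g_0)\cup V(H(\cdot;y))$. Every component of $V(g_0)$ is a base curve, so $\Lambda$ is a component of $V(H(\cdot;y))$, and therefore $H(q;y)=0$. The function $y\mapsto H(q;y)$ restricted to $C$ is not identically zero, because its value at the generic point is $h(q;\eta_C)\neq0$ up to a unit. The polynomial $H(q;y)$ has degree at most $t$ in $y$, and it does not vanish on $C$. Lemma~\ref{lem:bezout} then bounds its zeros on $C$ by $t\deg C$. Finally, $\deg C\le t$ because $C$ is a component of $\Gamma_q$ and $q\notin B_f$, which gives $t^2$.

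The main obstacle is the degree control in the specialization step. Writing $f=g_0H$ exactly on $C$ with $\deg_y H\le t$ is delicate, since the division happens in $\kk(C)[x]$ and denominators may appear. To avoid this, I would argue directly with $f$. Let $g_0$ have degree $a$. Then $f(x;y)|_{C}$ is divisible by $g_0$ in $\kk(C)[x]$, and the quotient's coefficients are rational functions on $C$. Instead of bounding these, I would pick a point $q'\in V(g_0)$ near $q$, or use the order of vanishing of $f(q;y)$ along $g_0$ at $q$. Alternatively, I would take a line $\ell$ through $q$ not contained in $V(g_0)$. Restricted to $\ell$, the quotient $f|_\ell/g_0|_\ell$ is a polynomial in one variable whose leading coefficient depends polynomially on $y$ of degree at most $t$. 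The condition that $q$ is a root of the quotient is a polynomial condition in $y$ of degree at most $t$, obtained via the coefficient of $(s-s_q)^{\mu}$, where $\mu$ is the multiplicity of $q$ on $g_0|_\ell$. This condition is generically nonzero on $C$, and Bézout then yields $t\deg C$.
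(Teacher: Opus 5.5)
This is the paper's argument: your $g_0$ and $H$ are its $G_C$ and $U_C$, and the remaining steps coincide. Those steps are $H(q;\eta_C)\ne0$, the fact that every component of $V(g_0)$ is a base curve so a nonbase component through $q$ must lie in $V(H(\cdot\,;y))$, and B\'ezout on $C$.

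The step you flag as delicate is immediate. It needs neither Gauss's lemma over $\kk[C]$, which is not a UFD in general, nor your line restriction. The line restriction would in any case require $g_0$, with multiplicity, to divide $f(\cdot\,;y)$ for every $y\in C$, and that is the same fact. Since $g_0\in\kk[x]$, multiplication by $g_0$ from $\kk[x]_{\le t-\deg g_0}$ to $\kk[x]_{\le t}$ is injective and has a $\kk$-linear left inverse. Applying this left inverse to the coefficient vector of $f(x;y)$ gives an $H$ whose coefficients are $\kk$-linear combinations of those of $f$, so $\deg_y H\le t$. Each $x$-coefficient of $f-g_0H$ vanishes at $\eta_C$, and hence on all of $C$.
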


\begin{lemma}[Reduction and bounded codegrees]\label{lem:twins}
For every finite $P\subset\kk^2$, $|P|=m$, and $n$ distinct curves
$\CC\subset\family_\kk(f)$, there is a parameter set
$Q\subset\kk^2\setminus \mathcal N_f$, $|Q|\le n$, such that
\begin{equation}\label{eq:essentialreduction}
 I(P,\CC)\le t^2m+t^2n+t\Iess(P\setminus B_f,Q).
\end{equation}
Moreover:
\begin{enumerate}[label=(\roman*)]
\item For $q\notin B_f$,
\[
 r_Q^*(q)\le\sum_{C\text{ essential for }q}|Q\cap C|+t^3.
\]
\item $P\setminus B_f$ can be colored with at most $t^4$ colors so that
no two points in one color class belong to the same $E_C$.
\item For two distinct points in one such class,
\[
 |\{y\notin \mathcal N_f:q,q'\in\Zess_y\}|\le2t^3.
\]
Consequently, for every subset $P_0$ of a single class,
\begin{equation}\label{eq:pairbound}
 \Iess(P_0,Q)\le |Q|+2t^3\binom{|P_0|}{2}.
\end{equation}
\end{enumerate}
\end{lemma}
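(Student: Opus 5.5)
We prove Lemma~\ref{lem:twins} in three stages: first the reduction~\eqref{eq:essentialreduction}, then (i), then the coloring (ii) and the codegree bound (iii).

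\textbf{Reduction.} Each $\Lambda\in\CC$ is a component of some fiber $Z_y(f_0)$ for the original polynomial. Here we take $f$ to be a mixed irreducible factor of the original polynomial. Factors depending only on $x$ give fixed curves, at most $t$ of them. Factors depending only on $y$ affect only which fibers vanish. For each $\Lambda$ we choose one parameter $y_\Lambda\notin\mathcal N_f$ with $\Lambda\subset Z_{y_\Lambda}$, and put $Q=\{y_\Lambda\}$, so $|Q|\le n$.

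We then split the incidences as follows.
\begin{itemize}[label=--]
\item Points of $B_f$ number at most $t^2$ by Lemma~\ref{lem:occurrence}, so they contribute at most $t^2n$.
\item Base curves: a point $q\notin B_f$ lies on at most $t^2$ base curves by Lemma~\ref{lem:base}, so these contribute at most $t^2m$.
\item Nonbase curves: the map $\Lambda\mapsto y_\Lambda$ is at most $t$-to-one, since a fiber has at most $t$ components. Each incidence $(q,\Lambda)$ with $\Lambda$ nonbase gives $q\in\Zess_{y_\Lambda}$. This yields $t\,\Iess(P\setminus B_f,Q)$.
\end{itemize}
The $x$-only fixed curves are absorbed into the $t^2m$ term.

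\textbf{Part (i).} Fix $q\notin B_f$. Every $y\in Q$ with $q\in\Zess_y$ satisfies $f(q;y)=0$, so $y\in\Gamma_q$. The curve $\Gamma_q$ has degree at most $t$. Its irreducible components $C$ are either essential for $q$, contributing at most $|Q\cap C|$ each, or nonessential. For a nonessential component, Lemma~\ref{lem:nonessential} bounds the number of relevant parameters by $t\deg C$. Summing over components gives $t\sum\deg C\le t^2\le t^3$.

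\textbf{Part (ii).} We build an auxiliary conflict graph on $P\setminus B_f$, joining $q$ and $q'$ when both lie in a common $E_C$. Any such $C$ is essential for $q$, hence is a component of $\Gamma_q$, so there are at most $t$ choices of $C$ for each $q$. Each $E_C$ has at most $t^3$ points by Lemma~\ref{lem:constantpoints}. So every vertex has degree less than $t\cdot t^3=t^4$, and greedy coloring uses at most $t^4$ colors.

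\textbf{Part (iii).} This is the main step. Take $q\ne q'$ in one class. The parameters $y$ with $q,q'\in\Zess_y$ lie in $\Gamma_q\cap\Gamma_{q'}$. If $f(q;\cdot)$ and $f(q';\cdot)$ share no common factor, B\'ezout (Lemma~\ref{lem:bezout}) gives at most $t^2$ parameters. Otherwise they share components $C$, and each shared $C$ is handled as follows.
\begin{itemize}[label=--]
\item If $C$ is nonessential for $q$, Lemma~\ref{lem:nonessential} bounds the parameters on $C$ by $t\deg C$. The same holds if $C$ is nonessential for $q'$.
\item If $C$ were essential for both, then $q,q'\in E_C$, which the coloring forbids.
\end{itemize}
Summing over the at most $t$ degrees of shared components gives at most $t^2+t^2\le 2t^3$.

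We expect the delicate point to be exactly this case analysis. It requires applying Lemma~\ref{lem:nonessential} to whichever point $C$ is nonessential for, and it works because the definition of $\Zess$ counts only nonbase components, matching the hypothesis of that lemma.

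\textbf{The bound~\eqref{eq:pairbound}.} This follows by the standard argument. Write
\[
\Iess(P_0,Q)=\sum_{y\in Q}|\{q\in P_0:q\in\Zess_y\}|,
\]
and use $r\le 1+\binom r2$. Summing the pair counts over $y$ and applying part (iii) to each pair gives $\Iess(P_0,Q)\le |Q|+2t^3\binom{|P_0|}{2}$.
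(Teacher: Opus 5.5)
Your proof is correct and follows the paper's argument essentially step for step. Both use the same split of $I(P,\CC)$ into incidences at $B_f$, on base curves, and on nonbase curves via one chosen parameter per curve, with at most $t$ curves per parameter. For (i) both apply Lemma~\ref{lem:nonessential} to the nonessential components of $\Gamma_q$, and for (ii) both use greedy coloring of the $E_C$-conflict graph. For (iii) both apply B\'ezout to the coprime parts of $f(q;\cdot)$ and $f(q';\cdot)$, use the coloring with Lemma~\ref{lem:nonessential} on each common component, and finish with $s\le 1+\binom{s}{2}$.

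The detour through factors of an ``original polynomial'' is unnecessary. In this lemma $f$ is already mixed and irreducible with $\CC\subset\family_\kk(f)$, so no $x$-only factors arise. In (iii) you should also state explicitly that B\'ezout is applied after removing the common factor, since your final count $t^2+t^2$ relies on that.
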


\subsection{A fixed exceptional curve}\label{sec:W}

The next lemma confines persistent singularities to a fixed curve in the
point plane. It does not require the specialized fibers to be reduced.

\begin{lemma}[A fixed exceptional curve]\label{lem:W}
Assume $p>t$ or $p=\infty$, and let $f$ be an irreducible mixed polynomial
as in Section~\ref{sec:structure}. There exist an index $j\in\{1,2\}$ with $g=f_{x_j}\ne0$ and a nonzero
polynomial $A_f\in\kk[x_1,x_2]$ of degree at most $4t(t-1)$ such that,
for every $q\notin V(A_f)\cup B_f$, the polynomial $g(q;\cdot)$ does not
vanish identically on any irreducible component of $\Gamma_q$.
This conclusion, with the same $A_f$, holds after any invertible linear
change of parameter coordinates.
For every finite $P\subset\kk^2$ with $|P|=m$ and every set $\CC$ of
$n$ distinct geometrically irreducible plane curves of degree at most $t$,
\begin{equation}\label{eq:Wcost}
 I(P\cap V(A_f),\CC)\le (\deg A_f)m+t(\deg A_f)n.
\end{equation}
\end{lemma}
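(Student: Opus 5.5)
Because $f$ is mixed it depends on $x$, so the plan is to fix $j\in\{1,2\}$ with $g=f_{x_j}\ne0$. Such a $j$ exists: if both partials vanished, $f$ would be a polynomial in $x_1^p,x_2^p$, which is impossible since $\deg_x f\le t<p$. As $\deg g<\deg f$ and $f$ is irreducible, $g$ is not divisible by $f$, so $f$ and $g$ are coprime in $\kk[x,y]$.

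\textbf{Construction of $A_f$.} We eliminate the parameters. Regard $f$ and $g$ as polynomials in $y$ over the field $\kk(x)$. Since $f$ is irreducible in $\kk[x][y]$ and, being mixed, has positive degree in $y$, Gauss's lemma shows that $f$ and $g$ remain coprime in $\kk(x)[y_1,y_2]$. After a generic linear change of parameter coordinates $y=(a,b)$, chosen so that $f$ is monic in $b$ up to a constant, take the resultant $R=\Res_b(f,g)\in\kk[x][a]$, which is nonzero. Let $A_f$ be a nonzero coefficient of $R$ as a polynomial in $a$.

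\textbf{Degree.} The Sylvester determinant has size at most $2t$ with entries of total degree at most $t$ in $(x,a)$. A weighted count, writing $\deg_b f\le t$, $\deg_b g\le t-1$ and tracking degrees in $x$ and $a$ together, should give $\deg R\le 2t(t-1)$ or at worst $4t(t-1)$, and hence the same bound for $\deg A_f$. This step only needs rough bookkeeping.

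\textbf{The key property.} Take $q\notin V(A_f)\cup B_f$, and suppose $g(q;\cdot)$ vanishes on a component $C$ of $\Gamma_q=V(f(q;\cdot))$. Then $f(q;\cdot)$ and $g(q;\cdot)$ share infinitely many zeros, namely those on $C$. Monicity in $b$ survives specialization, so $\Res_b(f(q;\cdot),g(q;\cdot))=R(q;a)$. This resultant vanishes for infinitely many $a$, so $R(q;a)\equiv0$ as a polynomial in $a$, and in particular $A_f(q)=0$, a contradiction.

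The main obstacle is invariance under linear changes of parameter coordinates: the resultant construction depends on the coordinates, yet the statement asks for one $A_f$ that works for all of them. To fix this I would define $A_f$ intrinsically. Consider the ideal $(f,g)\cap\kk[x]$ in $\kk[x,y]$; it is nonzero because $f,g$ are coprime over $\kk(x)$ and their common zero locus in $y$ over the generic $x$ is finite. This elimination ideal is unchanged by linear changes of $y$, and any nonzero element of it vanishes at every $q$ where $f(q;\cdot)$ and $g(q;\cdot)$ have a common curve. The resultant $R$ above shows the ideal contains an element of degree at most $4t(t-1)$, and we take $A_f$ to be such an element. The conclusion is stated purely in terms of $\Gamma_q$ and the vanishing of $g(q;\cdot)$, so it is coordinate-free once $A_f$ is.

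The subtle point is that ``contains a common curve'' must force a zero of the specialized elimination. Via $R$ this works only when monicity is preserved, so the same argument should be run with the generic change of coordinates, as above. Because $R=uf+vg$ with $u,v\in\kk[x,y]$, $R$ is itself an element of the elimination ideal. A coefficient of $R$ in $a$ is not obviously of the form $uf+vg$ by itself, so I would instead take $A_f$ to be any nonzero element of the elimination ideal of degree at most $4t(t-1)$ obtained from $R$. Then $A_f(q)=0$ for every such $q$: if $f(q;\cdot)$ and $g(q;\cdot)$ both vanish on a curve, the element $uf+vg$ evaluated at $q$ has the form $u(q;\cdot)f(q;\cdot)+v(q;\cdot)g(q;\cdot)$, which vanishes on that curve, and it is constant in $y$, so $A_f(q)=0$.

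\textbf{The incidence bound \eqref{eq:Wcost}.} Split $\CC$ into two parts. A curve $\Lambda\in\CC$ that is a component of $V(A_f)$ contains at most all the points of $P\cap V(A_f)$. There are at most $\deg A_f$ such curves, and each point lies on each of them at most once, so they contribute at most $(\deg A_f)m$ incidences. Every other $\Lambda$ meets $V(A_f)$ in at most $t\deg A_f$ points by B\'ezout (Lemma~\ref{lem:bezout}), so these curves contribute at most $t(\deg A_f)n$.
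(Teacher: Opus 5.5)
There is a genuine gap in how you define $A_f$. Your final choice is a nonzero element of $(f,g)\cap\kk[x]$, but this ideal is zero in general. Coprimality of $f$ and $g$ over $\kk(x)$ only says that, for generic $\xi$, the plane curves $\Gamma_\xi$ and $V(g(\xi;\cdot))$ meet in finitely many points. It does not say they are disjoint. A nonzero $A=uf+vg$ lying in $\kk[x]$ would vanish at every $q$ where these two curves merely intersect. For example, $f=x_1y_1+x_2y_2+1$ and $g=f_{x_1}=y_1$ have the common zero $y=(0,-1/q_2)$ whenever $q_2\ne0$, so $(f,g)\cap\kk[x]=0$. The resultant $R=\Res_b(f,g)$ lies only in $(f,g)\cap\kk[x,a]$ and generally depends on $a$, so it does not produce such an element.

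The ``obstacle'' that led you there is not real. $A_f$ involves only $x$, and under $y=My'$ one has $g'(q;y')=g(q;My')$ and $\Gamma'_q=M^{-1}\Gamma_q$. So the conclusion transfers verbatim once it holds in one coordinate system.

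Your first construction, a nonzero $a$-coefficient of $\Res_b(f,g)$, is closer, but it rests on the claim that a linear change of parameters makes $f$ monic in $b$ up to a constant. That is false. The $b^r$-coefficient is $\ell(x)=f_r(x;v)$, where $f_r$ is the top $y$-homogeneous part of $f$, and this is usually a nonconstant polynomial in $x$; in the example above, $\ell=x_1v_1+x_2v_2$. At points with $\ell(q)=0$, $\Gamma_q$ can contain a level line $a=c$. A common component of that form only forces $R(q;c)=0$, not $R(q;\cdot)\equiv0$, so your key step does not apply there.

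The paper handles this by taking fixed-degree resultants $\Res^{(r,s)}_{v_i}(f,g)$ in two nonparallel directions and setting $A_f=A_{f,1}A_{f,2}$. No curve is a level line of both $u_1$ and $u_2$. The fixed formal degrees keep specialization valid when degrees drop or $g(q;\cdot)\equiv0$. Each factor has degree at most $st+r(t-1)\le2t(t-1)$.

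Alternatively, you can repair your one-direction version by using the fixed formal degrees $(r,s)$ and multiplying the chosen coefficient by $\ell$. When $\ell(q)\ne0$, $f(q;\cdot)$ has exact $b$-degree $r$ with a nonzero constant leading coefficient, so every component of $\Gamma_q$ dominates the $a$-axis. The total degree is then at most $2t(t-1)+t-1\le4t(t-1)$. Your choice of $j$ and your proof of \eqref{eq:Wcost} are fine and match the paper.
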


\subsection{Contact multiplicity}

For a polynomial restricted to a smooth curve at a point $q$, we write
$\ord_q$ for its order in the local discrete valuation ring. We also use
$(u)_+=\max\{u,0\}$.
The following lemma bounds the total contact with members of a
one-parameter family after subtracting a bounded term at each point.

\begin{lemma}[Contact bound]\label{lem:norm}
Let $\Omega$ be algebraically closed, with $p=\operatorname{char}\Omega$
in positive characteristic and $p=\infty$ in characteristic zero. Let $\Phi\in\Omega[X,Y,B]$ have total degree at most
$t$ and $B$-degree $r\ge1$. Suppose
\begin{enumerate}[label=(\alph*)]
\item $\Disc_B\Phi\ne0$;
\item $\Phi(X,Y;c)\not\equiv0$ for every $c\in\Omega$.
\end{enumerate}
Let $F\in\Omega[X,Y]$ be nonzero and squarefree, of degree at most $d$,
where $1\le d<p/t$. Choose distinct points $q_i\in\Omega^2$ and
parameters $c_i\in\Omega$ such that
\[
 \Phi(q_i;c_i)=0,\qquad \nabla_{X,Y}\Phi(q_i;c_i)\ne0,
 \qquad\Phi(q_i;B)\not\equiv0.
\]
Let $\Lambda_i$ be the unique component of $\Phi(X,Y;c_i)=0$ through
$q_i$. Assume $F$ is not identically zero on $\Lambda_i$, and write
$\mu_i=\ord_{q_i}(F|_{\Lambda_i})$. Then
\begin{equation}\label{eq:normbound}
 \sum_i\epspart{\mu_i-c_*}\le rd(2t+d),
 \qquad c_*=1+(3r+1)t^2.
\end{equation}
\end{lemma}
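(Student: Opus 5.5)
Following Lewko's approach, I would pass from $F$ and the one-parameter family $\Phi$ to a single polynomial whose zero set captures the contact. Consider the tangent derivation along the members of the family,
\[
 \Der=\Phi_Y\,\partial_X-\Phi_X\,\partial_Y,
\]
which acts on $\Omega[X,Y,B]$ with $B$ as a constant. At $(q_i,c_i)$ the vector field is nonzero and tangent to $\Lambda_i$. If $\ord_{q_i}(F|_{\Lambda_i})=\mu_i<p$, then the iterates $\Der^jF$ vanish at $(q_i;c_i)$ for $j<\mu_i$, up to a correction that is controlled by the vanishing order of $\nabla\Phi$. Since $\nabla\Phi(q_i;c_i)\ne0$, this correction is zero, and the condition $d<p/t$ ensures that $\mu_i\le td<p$ is the relevant range whenever $F\not\equiv0$ on $\Lambda_i$ (by Lemma~\ref{lem:bezout}). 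Each $\Der^jF$ has degree at most $d+j(t-1)$ in $(X,Y)$ and degree at most $jr$ in $B$.

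Next I eliminate $B$. Let $W=\Der F\cdot$ (or a suitable Wronskian-type combination) and form $R(X,Y)=\Res_B(\Phi,W)$. In a less crude version I would instead take the norm $N(X,Y)=\prod_{\Phi(X,Y;\beta)=0}W(X,Y;\beta)$ over the $r$ roots $\beta$ in $\overline{\Omega(X,Y)}$. Hypothesis (a) makes the roots distinct, so the norm is a well-defined polynomial up to the leading coefficient $a_r(X,Y)$ of $\Phi$ in $B$. Its degree is $O(r(d+t))$. The point $q_i$ with root $c_i$ contributes vanishing order roughly $\mu_i-1$ to the factor belonging to $c_i$. Summing contributions in the form $\sum_i(\mu_i-1)$ against $\deg F\cdot\deg N$ via Lemma~\ref{lem:bezout}, i.e. intersecting $F$ with $N$, gives a bound $\le d\cdot r(2t+d)$. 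This requires $F$ and $N$ to be coprime, and it needs $\deg N\le r(2t+d)$, which comes from $\deg \Der F\le d+2t-2$ together with a contribution of the leading coefficient.

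The main obstacle is the coprimality, i.e. the exceptional factors. A common factor $h$ of $F$ and $N$ would mean that $\Der F$ vanishes along $h$ for some root branch. On $V(h)$ this says either that $V(h)$ is a member or component of a member $\Phi(\cdot;\beta)=0$ (the curve $V(h)$ is tangent to the family everywhere), or that $V(h)$ lies in the zero set of $a_r$ or of $\Disc_B\Phi$, where branches collide. In the first case, Lemma~\ref{lem:kernel} applied to the function field of $V(h)$ shows that $\beta$ restricted to $V(h)$ is constant (a $p$-th power is excluded by the degree bound $d<p/t$), so $V(h)\subset V(\Phi(\cdot;c))$ for a constant $c$. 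Hypothesis (b) and $\Phi(q_i;B)\not\equiv0$ then limit how such factors meet the $q_i$. I would remove these factors from $F$ and handle their contribution separately. Points on the leading-coefficient or discriminant curves, or where the branch through $q_i$ is ramified, are where I would spend the slack $c_*=1+(3r+1)t^2$. There the local order loss at each point is bounded by the intersection multiplicity of $\Lambda_i$ with $\Disc_B\Phi$ and $a_r$, whose degrees are at most $(2r-1)t$ and $t$, which is $O(rt^2)$ by Lemma~\ref{lem:bezout}. This bookkeeping, together with the truncation $(\mu_i-c_*)_+$ that absorbs the loss at each point, is the step I expect to be delicate.
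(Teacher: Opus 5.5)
Your architecture matches the paper's. You eliminate $B$ between $\Phi$ and $\Der_\Phi F$ by a resultant of degree at most $r(2t+d)$, intersect it with $F$ by B\'ezout, and use Lemma~\ref{lem:kernel} to show that a common factor $h$ is a component of a member unless it divides $a_r\Disc_B\Phi$. The degree count $[M_1:\Omega(b)]\le t\deg h<p$ rules out $p$th powers. (For the resultant degree, use $\deg\Der_\Phi F\le t+d-2$; with your $d+2t-2$ the same count gives only $r(3t+d-2)$.)

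The gap is in the step you flag as delicate. You plan to spend $c_*$ on a ``local order loss'' at points $q_i$ on $V(a_r)$ or $V(\Disc_B\Phi)$, or where the branch through $q_i$ is ramified. You bound this loss by the intersection multiplicity of $\Lambda_i$ with $a_r\Disc_B\Phi$. That quantity is infinite when $\Lambda_i$ lies in one of these curves, and the hypotheses allow this. For $\Phi=Y-B^2$ one has $\Disc_B\Phi=4Y$. Every $q_i=(x_i,0)$ with $c_i=0$ satisfies all hypotheses and is ramified, since $c_i$ is a double root of $\Phi(q_i;B)$; moreover $\Lambda_i=\{Y=0\}\subset V(\Disc_B\Phi)$. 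For $\Phi=XB^2+B+X$ and $c_i=0$ one gets $\Lambda_i=V(a_r)$. So your mechanism gives no bound exactly at the points you single out.

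The missing idea is to restrict to $\Lambda_i$ before estimating; then there is no loss at such points.
\begin{itemize}
\item First write $F=F_0F_1$, where $F_0$ is the product of all exceptional factors, including those dividing $a_r\Disc_B\Phi$. These must be removed too: for $\Phi=Y-BX-B^2$ and $F=X^2+4Y$ the resultant is a scalar multiple of $F$. Let $N_1$ be the resultant built from $F_1$; it is coprime to $F_1$.
\item Since $\Phi(\cdot\,;c_i)$ vanishes on $\Lambda_i$, one has $\Phi=(B-c_i)\widehat\Phi_i$ in $\mathcal O_{\Lambda_i,q_i}[B]$. This holds whatever the multiplicity of $c_i$ and even if $a_r(q_i)=0$.
\item Multiplicativity of the Sylvester resultant, with formal degrees kept fixed under restriction, then shows that $N_1|_{\Lambda_i}$ is divisible by $(\Der_\Phi F_1)(c_i)|_{\Lambda_i}=\Der_{\Phi,c_i}(F_1|_{\Lambda_i})$.
\item Because $\nabla\Phi(q_i;c_i)\ne0$, the derivation $\Der_{\Phi,c_i}$ is a unit times $d/du$ on $\Lambda_i$ near $q_i$. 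Hence this element has order at least $\ord_{q_i}(F_1|_{\Lambda_i})-1$.
\item Since $\mathcal O_{q_i}/(F_1,N_1)$ surjects onto $\mathcal O_{\Lambda_i,q_i}/(F_1|_{\Lambda_i},N_1|_{\Lambda_i})$, you get $I_{q_i}(F_1,N_1)\ge\epspart{\ord_{q_i}(F_1|_{\Lambda_i})-1}$ at every $q_i$.
\end{itemize}
The slack then pays only for $\ord_{q_i}(F_0|_{\Lambda_i})$. The member factors through $q_i$ lie in at most $r$ members because $\Phi(q_i;B)\not\equiv0$, so their total degree is at most $rt$. The remaining factors have total degree at most $t+(2r-2)t$. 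None of these factors is $\Lambda_i$, because $F\not\equiv0$ on $\Lambda_i$. B\'ezout against $\Lambda_i$ therefore gives $\ord_{q_i}(F_0|_{\Lambda_i})\le(3r+1)t^2=c_*-1$.
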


\subsection{Interpolation and parameter roots}

Return to an irreducible mixed polynomial $f$ as above, and write its
parameter coordinates as $(a,b)$. For the finitely many points under
consideration, we choose these coordinates so that $f$ depends on $b$
and every irreducible component of $\Gamma_q$, for $q\notin B_f$,
dominates the $a$ axis. The existence of this choice is verified in
Section~\ref{sec:jets}.

\begin{lemma}[Interpolation]\label{lem:interpolation}
Let $Q\subset\kk^2\setminus\mathcal N_f$ be finite, and put $n_Q=|Q|$.
For $d\ge1$, put
\[
 e=\left\lfloor\frac{2tn_Q}{d+2}\right\rfloor.
\]
There is a nonzero polynomial $S(x,a)$, squarefree in $\kk[x_1,x_2,a]$,
with
\[
 \deg_x S\le d,\qquad \deg_aS\le e,
\]
such that $S(x,a_y)$ vanishes on every component of $Z_y$ for every
$y=(a_y,b_y)\in Q$.
\end{lemma}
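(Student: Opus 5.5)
The plan is a linear-algebra dimension count, followed by a passage to the radical to obtain squarefreeness. The coordinate normalization is not needed here.

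\emph{Step 1: the conditions imposed by one parameter.} Let $V_{d,e}$ be the $\kk$-vector space of polynomials $S(x,a)$ with $\deg_xS\le d$ and $\deg_aS\le e$. Then
\[
 \dim V_{d,e}=(e+1)\binom{d+2}{2}.
\]
Fix $y=(a_y,b_y)\in Q$. Since $y\notin\mathcal N_f$, the polynomial $f(\,\cdot\,;y)$ is nonzero. Let $g_y$ be its radical, the product of its distinct irreducible factors. Then $\delta_y:=\deg g_y\le t$, and $Z_y=V(g_y)$.

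A polynomial $h\in\kk[x]$ vanishes on every component of $Z_y$ if and only if $g_y\mid h$, by the Nullstellensatz applied to each irreducible factor. The condition on $S$ is therefore that $S$ lies in the kernel of the linear map
\[
 S\mapsto S(x,a_y)\bmod g_y .
\]
Here $S(x,a_y)$ lies in the space $W_d$ of polynomials of degree at most $d$ in $x$. The target of the map is $W_d/(W_d\cap g_y\kk[x])$, whose dimension is
\[
 \binom{d+2}{2}-\binom{d+2-\delta_y}{2},
\]
with the second binomial coefficient read as $0$ when $\delta_y>d$.

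We check that this is at most $\delta_y(d+1)\le t(d+1)$. If $\delta_y\le d$, the difference equals $\delta_y(2d+3-\delta_y)/2\le\delta_y(d+1)$, using $\delta_y\ge1$. If $\delta_y>d$, the difference is $\binom{d+2}{2}=(d+1)(d+2)/2\le(d+1)\delta_y$, since $\delta_y\ge d+1\ge(d+2)/2$.

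Distinct $y$ with the same value $a_y$ cause no difficulty: each $y$ simply contributes its own linear map.

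\emph{Step 2: counting.} Together, the conditions for all $y\in Q$ form a linear map from $V_{d,e}$ to a space of dimension at most $t(d+1)n_Q$. A nonzero element of the kernel exists as soon as
\[
 (e+1)\frac{(d+1)(d+2)}{2}>t(d+1)n_Q,
\]
that is, $e+1>2tn_Q/(d+2)$. This holds for $e=\lfloor 2tn_Q/(d+2)\rfloor$. This gives a nonzero $S_0$ with the required degree bounds and vanishing property.

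\emph{Step 3: squarefreeness.} Factor $S_0=\prod h_i^{k_i}$ in $\kk[x_1,x_2,a]$ into distinct irreducibles, and set $S=\prod h_i$. Then $S\ne0$, and $\deg_xS\le d$ and $\deg_aS\le e$ because degrees in each variable add under multiplication.

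Now let $\Lambda$ be a component of $Z_y$. The polynomial $S_0(x,a_y)=\prod h_i(x,a_y)^{k_i}$ vanishes on the irreducible curve $\Lambda$, possibly identically. Since the ideal of $\Lambda$ is prime, some factor $h_i(x,a_y)$ vanishes on $\Lambda$, and hence so does $S(x,a_y)$.

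The only step requiring any care is the dimension estimate in Step 1, specifically the case $\delta_y>d$. That case is handled by the elementary inequality above.
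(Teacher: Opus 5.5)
Your proof is correct and follows the paper's argument. Both use a dimension count with at most $t(d+1)$ linear conditions per parameter, then pass to the squarefree part, using primality of each component's ideal to keep the vanishing property; the only cosmetic difference is that the paper imposes divisibility by the full fiber polynomial $f(x;y)$ rather than by its radical $g_y$, which gives the same bound on the number of conditions.
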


\begin{lemma}[Derivative root count]\label{lem:jets}
Let $S,d,e,Q$ be as in Lemma~\ref{lem:interpolation}. Let $C$
be an irreducible component of $\Gamma_q$ dominating the $a$ axis. Write
its geometric generic point as $\eta_C=(z,\beta)\in\overline{\kk(z)}^2$, where
$z$ is transcendental. Suppose the gradient of $f(x;z,\beta)$ with respect to $x$ is nonzero
at $q$, and let $\Lambda$ be the unique component of $f(x;z,\beta)=0$
through $q$.
If $S(x,z)$ is not identically zero on $\Lambda$, put
$\mu=\ord_q(S(\,\cdot\,,z)|_\Lambda)$. If $td<p$, then
\begin{equation}\label{eq:jetbound}
 |Q\cap C|\le (\deg C)\bigl(e+(t-1)\mu\bigr)
 \le te+t(t-1)\mu.
\end{equation}
\end{lemma}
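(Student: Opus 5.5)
The approach is to follow Lewko's jet argument: restrict everything to the curve $C$ in the parameter plane and study the function $a\mapsto S(\cdot,a)$ near $q$ along the moving component. Work over $M=\kk(C)$, a function field of one variable with transcendental coordinate $z=a|_C$, and $\beta$ algebraic of degree at most $\deg C$ over $\kk(z)$. Since $q$ is a smooth point of the generic fibre $\Lambda$, choose a local parameter on $\Lambda$ at $q$. Use the implicit function theorem to write the branch of $f(x;z,\beta)=0$ through $q$ as a power series $x(s)$ over $\Omega_C$.

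The key object is the jet
\[
 \sigma(s)=S(x(s),z)=\sum_{j\ge\mu}\sigma_j s^j, \qquad \sigma_\mu\ne0.
\]
The coefficient $\sigma_\mu$ is an element of $M$, or of a bounded-degree extension coming from the tangent direction. It is obtained from iterated tangent derivatives
\[
 \sigma_\mu=\frac{1}{\mu!}\,D^\mu S,
\]
where $D=f_{x_2}\partial_{x_1}-f_{x_1}\partial_{x_2}$ is evaluated at $(q;z,\beta)$. Here $\mu\le td<p$ guarantees that $\mu!$ is invertible, so $\sigma_\mu$ really is the leading coefficient. Clearing denominators, $\sigma_\mu\cdot g^{2\mu-1}$ is a polynomial in $(z,\beta)$ of $a$-degree at most $e$ coming from $S$, plus at most $(t-1)\mu$ coming from the $\mu$ derivatives of $f$. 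Here $g=f_{x_j}(q;z,\beta)$, and each application of $D$ adds at most $t-1$ to the degree in the parameters.

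Now fix $y=(a_y,b_y)\in Q\cap C$. By Lemma~\ref{lem:interpolation}, $S(x,a_y)$ vanishes on every component of $Z_y$, hence on the specialized branch through $q$. Specializing $(z,\beta)\mapsto y$ therefore kills every tangent derivative of $S$ at $q$ along that branch. Consequently the polynomial
\[
 N(z,\beta)=g^{2\mu-1}\sigma_\mu
\]
vanishes at every point of $Q\cap C$ at which the branch specializes well. If $g(q;y)=0$ we also get a zero, which is harmless, since we only need $N$ to vanish. In that case we count these points as zeros of $N$ anyway, because $N$ carries the factor $g$.

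Since $N$ is a nonzero function on $C$, Bézout gives
\[
 |Q\cap C|\le \#\{\text{zeros of } N \text{ on } C\}\le (\deg C)\deg N\le (\deg C)\bigl(e+(t-1)\mu\bigr).
\]
More precisely, take the norm from $M$ to $\kk(z)$: it is a nonzero polynomial in $z$ of degree at most $\deg C\cdot(e+(t-1)\mu)$, and distinct points of $Q\cap C$ have distinct $a$-coordinates up to multiplicity $\deg C$. Dominance of the $a$ axis ensures $z$ is transcendental, and the bound $\deg C\le t$ yields the second inequality.

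The main obstacle is making rigorous the claim that specializing the generic jet at $y$ computes the jet of $S(\cdot,a_y)$ on the specialized branch. It is also not immediate that vanishing on $Z_y$ forces $N(y)=0$ when $y$ lands where the generic branch degenerates, for instance when the fibre is singular or nonreduced at $q$. I would handle this by working with polynomial identities rather than series. Define $N$ via the derivation $D$ applied formally in $\kk[x,a,b]$, note that $D^kS$ lies in the ideal generated by $f$ and $S$ along the fibre whenever $S(\cdot,a_y)$ vanishes on $V(f(\cdot;y))$, and verify that $N(q;y)=0$ inductively. Nonvanishing of $N$ on $C$ then comes from $\sigma_\mu\ne0$ and $g\ne0$.
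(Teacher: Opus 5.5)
Your overall strategy matches the paper's: iterate the Hamiltonian derivation $\Der_f=f_{x_2}\partial_{x_1}-f_{x_1}\partial_{x_2}$, use $\mu\le td<p$ so that $\mu!\ne0$, and apply B\'ezout on $C$ to a polynomial of parameter degree at most $e+\mu(t-1)$. The execution through implicit-function jets, however, has a genuine gap.

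The first problem is that the bookkeeping for $N$ is inconsistent. If $\sigma_\mu=\frac1{\mu!}(\Der_f^\mu S)(q;z,\beta)$, as you first write, there are no denominators to clear. If instead $\sigma_\mu$ is the Taylor coefficient in $u=x_1-q_1$, then $\Der_f(u)=f_{x_2}$ gives $(\Der_f^\mu S)(q;z,\beta)=\mu!\,g^\mu\sigma_\mu$. Your $N=g^{2\mu-1}\sigma_\mu=g^{\mu-1}(\Der_f^\mu S)(q;\cdot)/\mu!$ then has degree up to $e+(2\mu-1)(t-1)$, not $e+\mu(t-1)$.

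The second problem is more serious. Your treatment of parameters $y\in Q\cap C$ with $g(q;y)=0$, or where $Z_y$ is singular or nonreduced at $q$, does not work, because $N$ need not carry a factor $g$. For $\mu=1$ one has exactly $N=g\sigma_1=(\Der_f S)(q;\cdot)$, and for $\mu=0$ the exponent $2\mu-1$ is negative. These parameters cannot be discarded: $Q$ is an arbitrary set of interpolation parameters and may contain many such points of $C$.

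The missing ingredient is the one you only gesture at in your last paragraph, and it makes the series unnecessary. For every parameter $y$, if $f(\cdot;y)=g_0^sh$, then $\Der_{f,y}(g_0)=g_0^s(h_{x_2}g_{0,x_1}-h_{x_1}g_{0,x_2})\in(g_0)$. So $\Der_{f,y}$ preserves the ideal of each reduced component of $Z_y$, even when the fibre is nonreduced.

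By Lemma~\ref{lem:interpolation}, $S(\cdot,a_y)$ lies in each such ideal, hence so does every iterate $\Der_{f,y}^jS(\cdot,a_y)$. Since $f(q;y)=0$ puts $q$ on some component, the unnormalized polynomial $\Theta_\mu(a,b)=(\Der_f^\mu S)(q;a,b)$ vanishes at every $y\in Q\cap C$, with no smoothness condition at $y$. This polynomial has degree at most $e+\mu(t-1)$.

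For nonvanishing on $C$, write $S|_\Lambda=cu^\mu+O(u^{\mu+1})$ and $\Der_f=w(u)\,d/du$ on $\Lambda$ with $w(0)\ne0$. This gives $\Theta_\mu(z,\beta)=\mu!\,c\,w(0)^\mu\ne0$. With $N$ replaced by $\Theta_\mu$, plain B\'ezout in the parameter plane gives the stated bound. Your norm variant, which allows $\deg C$ points per value of $a$, would as written lose an extra factor of $\deg C$ unless you track vanishing orders of the norm.
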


\section{Proofs of the structural lemmas
(Lemmas~\ref{lem:bezout}--\ref{lem:W})}\label{sec:structureproofs}

We first prove the two algebraic facts. We then prove the structural
lemmas for a two-parameter family.

\subsection{B\'ezout and derivations}

\begin{proof}[Proof of Lemma~\ref{lem:bezout}]
The assertion is immediate if either polynomial is a nonzero constant.
Write $\Pi_h=L[X,Y]_{\le h}$. For $h\ge a+b$, consider
\[
 J_h=H_1\Pi_{h-a}+H_2\Pi_{h-b}\subseteq (H_1,H_2)\cap\Pi_h.
\]
Coprimality and unique factorization give
$H_1\Pi_{h-a}\cap H_2\Pi_{h-b}=H_1H_2\Pi_{h-a-b}$. Therefore,
\[
 \begin{aligned}
 \dim\Pi_h-\dim J_h
 &=\binom{h+2}{2}-\binom{h-a+2}{2}
   -\binom{h-b+2}{2}+\binom{h-a-b+2}{2}\\
 &=ab.
 \end{aligned}
\]
Thus, the image of each $\Pi_h$ in the quotient has dimension at most
$ab$. These images form an increasing sequence whose union is the
quotient, so the quotient also has dimension at most $ab$. Over an
algebraically closed field, a finite-dimensional algebra is the product
of its local Artinian factors. Their lengths are the local intersection
multiplicities.
\end{proof}

\begin{proof}[Proof of Lemma~\ref{lem:kernel}]
In characteristic zero, if a nonconstant $u$ satisfies $\Der(u)=0$, then
$M/\Omega(u)$ is a finite separable extension. A derivation that vanishes on a field also vanishes on every separable
algebraic extension of that field. Hence, $\Der=0$, a contradiction.

In characteristic $p$, take any transcendental $u\in M$ and put
$e_M=[M:\Omega(u)]$. Frobenius and perfection of $\Omega$ give
$[M^p:\Omega(u^p)]=e_M$, whereas
$[M:\Omega(u^p)]=pe_M$. Thus, $[M:M^p]=p$. The kernel of $\Der$ is an
intermediate field containing $M^p$ and is proper because $\Der\ne0$.
Since this extension has prime degree, that intermediate field is $M^p$.
\end{proof}

\subsection{Base curves and essential components}

Throughout this subsection, $f$ is irreducible and depends on both
variable groups, as in Section~\ref{sec:structure}.

\begin{proof}[Proof of Lemma~\ref{lem:occurrence}]
Choose one nonzero polynomial $h$ in the given list. For each irreducible
factor of $h$, some polynomial in the list is not divisible by that factor.
Since $\kk$ is infinite, a generic $\kk$-linear combination of the
polynomials is coprime to $h$. The first assertion follows from B\'ezout
(if $h$ is a nonzero constant, the common zero set is empty). Applying
this assertion to the coefficients of $f$ in either variable group gives
the bounds for $\mathcal N_f$ and $B_f$:
a common coefficient factor would divide the irreducible mixed polynomial
$f$.

Let $g_\Lambda\in\kk[x]$ be an irreducible equation of $\Lambda$, of degree
$a$. Multiplication by $g_\Lambda$ identifies
$\kk[x]_{\le t-a}$ with a fixed linear subspace of $\kk[x]_{\le t}$.
The coefficient vector of $f(x;y)$ belongs to this subspace if and only
if it satisfies a fixed system of linear equations over $\kk$. Thus,
membership is expressed by polynomials in $y$ of degree at most $t$. These equations are valid over
every extension field. They cannot vanish identically in $y$, since that
would imply $g_\Lambda\mid f$ in $\kk[x,y]$. If their zero set is finite, the first assertion gives the bound $t^2$.
Conversely, removing the finite set $\mathcal N_f$ from a
positive-dimensional closed set over $\kk$ cannot leave a finite set.
\end{proof}

\begin{proof}[Proof of Lemma~\ref{lem:constantpoints}]
Choose a $\kk$-basis $e_1,\ldots,e_h$ of the span of the coefficients of
$g$, and write $g=\sum_j e_jg_j$ with $g_j\in\kk[x]$ of degree at most $a$.
A point of $\kk^2$ is a zero of $g$ exactly when it is a zero of every
$g_j$. A common nonconstant factor of the $g_j$ would, by irreducibility of
$g$, make $g$ a scalar multiple of a polynomial over $\kk$. Thus,
Lemma~\ref{lem:occurrence} gives at most $a^2$ zeros. There are at most $t$
nonconstant components of the generic member, each of degree at most $t$,
which proves the stated bound on $E_C$.
\end{proof}

\begin{proof}[Proof of Lemma~\ref{lem:base}]
The coefficient conditions of Lemma~\ref{lem:occurrence} show that a
constant irreducible equation divides $f(x;\eta_C)$ if and only if it
divides every specialization along $C$. There are at most $t$ such curves,
since they occur in any one nonzero member on $C$.

For each base curve $\Lambda$ through $q$, the closure of $F_\Lambda$
has an irreducible curve component $C_\Lambda$. All its nonzero members
contain $\Lambda$, so $C_\Lambda\subset\Gamma_q$. There are at most $t$ choices for $C_\Lambda$, since $\Gamma_q$ has
at most $t$ components. For each choice, the preceding characterization
gives at most $t$ possibilities for $\Lambda$.
\end{proof}

\begin{proof}[Proof of Lemma~\ref{lem:nonessential}]
Let $G_C(x)\in\kk[x]$ be the product, with their generic multiplicities, of
all constant geometric factors of $f(x;\eta_C)$, choosing representatives
normalized over $\kk$. We construct a quotient whose coefficients are polynomial in the parameters. The matrix of multiplication by $G_C$ from
$\kk[x]_{\le t-\deg G_C}$ to $\kk[x]_{\le t}$ has entries in $\kk$
and has a left inverse over $\kk$, since multiplication by $G_C$ is
injective. Apply that left inverse to the coefficients of
$f(x;y)$, and denote the resulting polynomial by $U_C(x;y)$. It has parameter
degree at most $t$, and
\[
 f(x;y)=G_C(x)U_C(x;y)\quad\hbox{for every }y\in C.
\]
Indeed, this equality holds at the generic point of $C$, so each coefficient
of the difference vanishes on $C$. At that generic point, the geometric
factors of $U_C$ are precisely the nonconstant factors. Since $q\notin E_C$,
$U_C(q;\eta_C)\ne0$.

Every factor of $G_C$ is a base curve by Lemma~\ref{lem:base}.
For $y\in C\setminus \mathcal N_f$, any nonbase component of $Z_y$ must therefore
be a component of $U_C(x;y)=0$. In particular, $q\in\Zess_y$ implies
$U_C(q;y)=0$. This polynomial has degree at most $t$ in $y$ and is not
identically zero on $C$. B\'ezout proves~\eqref{eq:nonessential}.
\end{proof}

\begin{proof}[Proof of Lemma~\ref{lem:twins}]
Points of $B_f$ contribute at most $t^2n$, and base curves contribute at
most $t^2m$ outside $B_f$. For each nonbase curve in $\CC$, choose a parameter in
$\kk^2\setminus\mathcal N_f$ for a fiber containing it, and let $Q$
be the set of chosen parameters. If $\CC\subset\family_{\F}(f)$, these
parameters may be chosen in $\F^2$. Each parameter corresponds to at most
$t$ curves in $\CC$, which gives
\eqref{eq:essentialreduction}. Part (i) follows from
Lemma~\ref{lem:nonessential}, with at most $t$ dual components per point.

On $P\setminus B_f$, join two points when they both belong to some
$E_C$. Each point belongs to at most $t$ such sets, each of size at most $t^3$.
Thus, this graph has maximum degree less than $t^4$. Greedy coloring gives (ii).

For (iii), remove the common factors from the equations of $\Gamma_q$
and $\Gamma_{q'}$. The remaining polynomials are coprime, so B\'ezout
bounds the number of intersection points outside the common curve
components by $t^2$. On each
common component $C$, at least one of $q,q'$ is not in $E_C$, by the
coloring. Lemma~\ref{lem:nonessential} bounds the number of parameters on $C$
for which $q,q'\in\Zess_y$ by $t^2$. There are at most $t$ such components. Finally, if
$s_y=|\{q\in P_0:q\in\Zess_y\}|$, then
$s_y\le1+\binom{s_y}{2}$. Sum this inequality and use (iii).
\end{proof}

\subsection{Persistent singularities}

\begin{proof}[Proof of Lemma~\ref{lem:W}]
At least one derivative $g=f_{x_j}$ is nonzero, since $f$ depends on
$x$ and has degree less than $p$. The polynomials $f,g$ are coprime, since $f$ is
irreducible and $\deg g<\deg f$. Write $r=\deg_y f\ge1$ and
$s=\deg_y g\ge0$.

Choose two nonparallel parameter directions along which the respective
degrees of $f$ and $g$ are $r$ and $s$. Such directions exist because the highest-degree homogeneous parts in
the parameter variables, with coefficients in $\kk[x]$, vanish in only
finitely many directions. For each chosen direction,
choose a nonzero linear form $u_i$ whose kernel is that direction, and
choose $v_i$ so that $(u_i,v_i)$ are linear coordinates. Thus, the lines
$u_i=\mathrm{constant}$ are parallel to the chosen direction. Form
\[
 R_i(x,u_i)=\Res_{v_i}^{(r,s)}(f,g),\qquad i=1,2.
\]
The superscript specifies the fixed formal degrees in the Sylvester
determinant. In particular, the determinant can be specialized without
changing its size. These resultants are
nonzero by coprimality over $\kk(x,u_i)$ and Gauss's lemma. If $s=0$,
$g$ depends only on $x$ and the convention is $R_i=g^r$. The degree bound
is
\[
 \deg_{x,u_i}R_i\le st+r(t-1)\le2t(t-1).
\]
Choose one nonzero coefficient $A_{f,i}(x)$ of each $R_i$ as a polynomial in
$u_i$, and put $A_f=A_{f,1}A_{f,2}$. Since both coefficients $A_{f,i}$ are nonzero,
$A_f$ is nonzero.

Suppose an irreducible parameter curve $C$ is a common zero component of
$f(q;y)$ and $g(q;y)$. The two linear forms $u_1,u_2$ are independent,
since their kernels are distinct. Consequently, a curve cannot be
contained in a level line of both forms, and $C$ dominates at least one
of the two $u_i$ axes. Over $\kk(u_i)$, the specialized polynomials then have a common factor
of positive degree in $v_i$, or one is zero and the other has such a
factor. Their fixed-degree resultant
is zero. This remains true when degrees drop: if $g(q;y)\equiv0$, the determinant is zero because it is homogeneous
of positive degree $r$ in the coefficients of $g$; if $s=0$, the
assertion is simply $g(q)^r=0$. If $f(q;y)\equiv0$ and $s>0$, homogeneity
of positive degree $s$ in its coefficients also makes the resultant zero.
Thus, $R_i(q,u_i)\equiv0$, so $A_{f,i}(q)=0$.

For~\eqref{eq:Wcost}, at most $\deg A_f$ of the distinct input curves can
be irreducible components of $V(A_f)$; they contribute at most $(\deg A_f)m$.
Every other input curve meets $V(A_f)$ in at most $t\deg A_f$ points.
If $A_f$ is constant, the exceptional set is empty.
\end{proof}

Outside $V(A_f)\cup B_f$, the gradient with respect to $x$ of the generic member along any
component of $\Gamma_q$ is nonzero at $q$. Hence, this member has a
unique component through $q$, and that component is smooth at $q$. Other components of that member may still
occur with multiplicity.

\begin{remark}\label{rem:persistent}
Genericity along a parameter curve alone does not guarantee smoothness
at the chosen point. In characteristic different from $2$, consider
\[
 f(x_1,x_2;a,b)=(x_2-b)^2-a(x_1-b)^2.
\]
For $q=(s,s)$, one has $f(q;a,b)=(s-b)^2(1-a)$. Along the essential
parameter curve $b=s$, the generic member consists of the two lines
$x_2-s=\pm\sqrt a\,(x_1-s)$, both passing through $q$. It is therefore
singular there. These points lie on the fixed diagonal $x_1=x_2$.
Lemma~\ref{lem:W} allows their incidences to be counted separately.
\end{remark}

\section{Proof of the contact bound
(Lemma~\ref{lem:norm})}\label{sec:norm}

We now prove the contact bound. We first identify the factors for which
the resultant argument does not apply. We then estimate their
contribution separately and use B\'ezout's theorem for the remaining
factors.

\begin{proof}[Proof of Lemma~\ref{lem:norm}]
Write $\ell_\Phi=\operatorname{lc}_B\Phi$ and
$\Delta_\Phi=\Disc_B\Phi$. Define the derivation in the point variables by
$\Der_\Phi=\Phi_Y\partial_X-\Phi_X\partial_Y$ and, for a polynomial
$H\in\Omega[X,Y]$, put
\[
 \mathcal R_\Phi(H)=\Res_B^{(r,r)}(\Phi,\Der_\Phi H).
\]
We regard both polynomials as having degree $r$ in $B$, inserting zero
coefficients when necessary. With this convention, the resultant
identities remain valid after specialization, including when the degree
drops. If $\mathcal R_\Phi(H)\ne0$, homogeneity of the resultant gives
\begin{equation}\label{eq:normdegree}
 \deg \mathcal R_\Phi(H)\le r(2t+\deg H).
\end{equation}
Since $\Der_\Phi(gh)\equiv h\Der_\Phi g\pmod g$, the same property gives
\begin{equation}\label{eq:normproduct}
 \mathcal R_\Phi(gh)\equiv h^r\mathcal R_\Phi(g)\pmod g.
\end{equation}

\smallskip
Let $g$ be an irreducible factor of degree $a<p/t$ such that
$g\mid \mathcal R_\Phi(g)$. We treat the factors of
$\ell_\Phi\Delta_\Phi$ separately in the last paragraph of this proof. Assume, therefore, that
$g\nmid\ell_\Phi\Delta_\Phi$, and let $M=\Omega(V(g))$ be the function
field of $V(g)$. In $M[B]$, the restriction of $\Phi$ has degree $r$ and
distinct roots.
If the restriction of $\Der_\Phi g$ is nonzero and has degree $s\le r$, the
fixed-degree resultant satisfies
\[
 \Res_B^{(r,r)}(\Phi,\Der_\Phi g)
 =\ell_\Phi^{\,r-s}\Res_B(\Phi,\Der_\Phi g),
\]
up to a sign depending on the resultant convention. Here $\ell_\Phi$ is
nonzero in $M$, so the two polynomials have a common root $b$.
If $\Der_\Phi g$ is zero in $M[B]$, choose any root $b$ of $\Phi$ instead.
In either case $(\Der_\Phi g)(b)=0$, and $M_1=M(b)$ is a finite separable
extension of $M$.
The derivation
\[
 \Der_g=g_Y\partial_X-g_X\partial_Y
\]
induces a nonzero derivation of $M$. Indeed, $\deg g<p$ implies that
at least one partial derivative of $g$ is nonzero. Its degree is smaller
than $\deg g$, so it is nonzero in $M$. This derivation extends uniquely
to $M_1$. Differentiating $\Phi(X,Y;b)=0$ gives
\[
 0=-(\Der_\Phi g)(b)+\Phi_B(b)\Der_g(b),
\]
so $\Der_g(b)=0$, since $\Phi_B(b)\ne0$.

In characteristic zero, Lemma~\ref{lem:kernel} yields $b\in\Omega$.
In positive characteristic it yields $b=\beta^p$ for some $\beta\in M_1$.
If $b\notin\Omega$, then $b$ is transcendental over the algebraically
closed field $\Omega$. Thus, evaluation at $B=b$ identifies $\Omega(B)$
with $\Omega(b)$. Absolute irreducibility of $g$ implies that it remains
irreducible over $\Omega(B)$. Moreover, $g$ and $\Phi(X,Y;B)$ are coprime
there: divisibility would force every $B$-coefficient of $\Phi$ to vanish
modulo $g$, contrary to $g\nmid\ell_\Phi$. Affine B\'ezout gives
\[
 \dim_{\Omega(B)}\Omega(B)[X,Y]/(g,\Phi(X,Y;B))\le ta.
\]
Evaluation at $B=b$ and at the coordinate functions $X,Y$ maps this
algebra into $M_1$. Its image is a finite-dimensional domain over
$\Omega(b)$, hence a field. It contains the coordinate functions $X,Y$
and $\Omega(b)$, so it equals $M_1$.
Therefore,
\[
 [M_1:\Omega(b)]\le ta<p.
\]
But the intermediate extension
$\Omega(\beta)/\Omega(\beta^p)=\Omega(\beta)/\Omega(b)$ has degree $p$,
a contradiction. Thus, in both characteristics, $b=c\in\Omega$ and
$g\mid\Phi(X,Y;c)$. We have proved that every factor under consideration
defines a component of a member $\Phi(X,Y;c)=0$ with $c\in\Omega$, of
$V(\ell_\Phi)$, or of $V(\Delta_\Phi)$.

\smallskip

For coprime plane
polynomials $H_1,H_2$, write $I_q(H_1,H_2)$ for their local intersection
multiplicity at $q$. In the local discrete
valuation ring of the smooth component $\Lambda_i$ at $q_i$, polynomial
division gives $\Phi(X,Y;B)=(B-c_i)\widehat\Phi_i(B)$. To apply the
resultant identity without a degree assumption after restriction, homogenize
the two coefficient vectors to binary forms of formal degree $r$ using
a second variable $T$. The superscript $h$ denotes homogenization,
and $\Res_{r,r}$ denotes the homogeneous resultant of the two degree-$r$
binary forms, equivalently the fixed-degree resultant used above.
In that local ring,
\[
 \Phi^h(B,T)=(B-c_iT)\widehat\Phi_i^{\,h}(B,T),\qquad
 \Res_{r,r}(\Phi^h,(\Der_\Phi H)^h)
 =(\Der_\Phi H)(c_i)\Res_{r-1,r}(\widehat\Phi_i^{\,h},(\Der_\Phi H)^h),
\]
up to a sign. This identity remains valid even
when leading coefficients vanish or a restricted form becomes zero.
Thus, $\mathcal R_\Phi(H)|_{\Lambda_i}$ is divisible by $(\Der_\Phi H)(c_i)|_{\Lambda_i}$.
To justify the order estimate, write the specialized polynomial locally
as $\Phi(X,Y;c_i)=h_i g_i$, where $g_i=0$ defines $\Lambda_i$ and $h_i$
is a unit at $q_i$. The hypothesis on $\nabla_{X,Y}\Phi(q_i;c_i)$ ensures that
$g_i$ occurs with multiplicity one and is smooth there. On $\Lambda_i$,
the derivation
\[
 \Der_{\Phi,c_i}=\Phi_Y(X,Y;c_i)\partial_X-\Phi_X(X,Y;c_i)\partial_Y
\]
equals $h_i(g_{i,Y}\partial_X-g_{i,X}\partial_Y)$. It preserves the
local ring of $\Lambda_i$ and is nonzero at $q_i$. In a local
uniformizer $u$ it therefore has the form $w(u)d/du$, with $w(0)\ne0$.
Since $(\Der_\Phi H)(c_i)|_{\Lambda_i}=\Der_{\Phi,c_i}(H|_{\Lambda_i})$, its order is at
least $\ord_{q_i}(H|_{\Lambda_i})-1$.
If $H,\mathcal R_\Phi(H)$ are coprime, the local plane
intersection algebra maps onto the quotient on $\Lambda_i$. The latter
has length $\min\{\ord(H|_{\Lambda_i}),\ord(\mathcal R_\Phi(H)|_{\Lambda_i})\}$,
with order $\infty$ for the zero restriction. Hence,
\begin{equation}\label{eq:localcontact}
 I_{q_i}(H,\mathcal R_\Phi(H))\ge
 \epspart{\ord_{q_i}(H|_{\Lambda_i})-1}.
\end{equation}
This also applies when the restriction of the resultant to $\Lambda_i$
is zero; the length of the quotient is then the order of $H$ itself.

\smallskip

Factor $F$ over $\Omega$. Let $F_0$ be the product of those factors that
define a component of some member $\Phi(X,Y;c)=0$, of $V(\ell_\Phi)$, or of
$V(\Delta_\Phi)$, and set $F_1=F/F_0$.
Combining equation~\eqref{eq:normproduct} and squarefreeness
implies $\gcd(F_1,\mathcal R_\Phi(F_1))=1$ if $F_1$ is nonconstant. In this case
$\mathcal R_\Phi(F_1)\ne0$, since otherwise its gcd with the nonconstant polynomial
$F_1$ would be $F_1$. Applying
\eqref{eq:localcontact} and B\'ezout gives
\begin{equation}\label{eq:residualcontact}
 \sum_i\epspart{\ord_{q_i}(F_1|_{\Lambda_i})-1}
 \le rd(2t+d).
\end{equation}
If $F_1$ is constant, the left-hand side is zero.

At a marked point $q_i$, the nonzero polynomial $\Phi(q_i;B)$ has at most
$r$ distinct roots. The factors of $F_0$ through $q_i$ that occur in
members of the family therefore lie in at most $r$ nonzero members, whose degrees sum to at most
$rt$. Since the discriminant is homogeneous of degree $2r-2$ in the
$B$-coefficients, each of degree at most $t$, we have
$\deg\Delta_\Phi\le(2r-2)t\le2rt$. The leading-coefficient and
discriminant factors therefore have total degree at most $t+2rt$.
Thus, the sum of degrees of all distinct factors of $F_0$
through $q_i$ is at most $(3r+1)t$. None is $\Lambda_i$, by the hypothesis
on $F$. B\'ezout consequently gives
\[
 \ord_{q_i}(F_0|_{\Lambda_i})\le(3r+1)t^2=c_*-1.
\]
Since orders add under multiplication, this estimate
and~\eqref{eq:residualcontact} give~\eqref{eq:normbound}.
\end{proof}

\begin{remark}[Squarefreeness over an imperfect intermediate field]\label{rem:squarefree}
We will apply the lemma after extending $L=\kk(z)$ to its algebraic
closure. A squarefree polynomial over $L$ of degree less than $p$ stays
squarefree over that closure. Indeed, each irreducible factor has a nonzero
partial derivative, coprime to that factor. These coprimality relations
persist under extension of fields: the finite-dimensional quotient in
Lemma~\ref{lem:bezout} stays finite-dimensional after scalar extension,
whereas a common curve factor would give an infinite-dimensional
quotient. A repeated geometric factor
would divide the polynomial and all its partial derivatives. Distinct
irreducible factors remain coprime under extension as well. The same
argument applies in characteristic zero without a degree restriction.
\end{remark}

\begin{remark}[Exceptional factors]
In characteristic zero, take $\Phi=Y-BX-B^2$ and $F=X^2+4Y$.
Each line $Y=cX+c^2$ is tangent to $F=0$ at $(-2c,-c^2)$, with contact
order $2$. Since we may choose arbitrarily many such points, a bound for
$\sum(\mu_i-1)_+$ requires a separate treatment of exceptional factors.
Here $F$ is exactly the $B$-discriminant; the resultant
$\mathcal R_\Phi(F)$ is a nonzero scalar multiple of $F$. A different issue occurs for
$\Phi=XB^2+YB+1$ and $F=X$: the ordinary resultant of $\Phi$ with
$\Der_\Phi F=B$ is $1$, while the fixed-degree $(2,2)$ resultant is $X$.
This factor comes from the leading coefficient. The removal of both
discriminant and leading-coefficient factors in Lemma~\ref{lem:norm}
is therefore necessary in this argument.
\end{remark}

\section{Proofs of the interpolation and derivative estimates
\texorpdfstring{\\}{ }(Lemmas~\ref{lem:interpolation}--\ref{lem:jets})}\label{sec:jets}

We now return to an irreducible polynomial $f$ over $\kk$ that depends on
both groups of variables. Make a linear change
of parameter coordinates and write them as $(a,b)$, so that $f$ depends on
$b$ and every irreducible component of $\Gamma_q$, for the finitely many
points $q\notin B_f$ under consideration, dominates the $a$ axis.
Such a change exists over the infinite field $\kk$: a curve fails to
dominate a linear projection only if it is a line parallel to its kernel,
and there are only finitely many relevant components. A generic kernel
also makes the $b$ degree of $f$ equal to its positive total parameter
degree. All total degrees and incidences are preserved.

\begin{proof}[Proof of Lemma~\ref{lem:interpolation}]
Consider the polynomials $S_0(x,a)$ satisfying the two degree bounds in
the statement. They form a vector space of dimension $(e+1)\binom{d+2}{2}$. For each $y\in Q$, impose
$f(x;y)\mid S_0(x,a_y)$. If the nonzero fiber polynomial has degree $h$,
the number of linear conditions is
\[
 \binom{d+2}{2}-\binom{d-h+2}{2}\le h(d+1)\le t(d+1)
\]
when $h\le d$, and is $\binom{d+2}{2}$ when $h>d$. In the latter case
$d\le h-1\le t-1$, so again $\binom{d+2}{2}\le t(d+1)$.
A nonzero constant fiber imposes zero conditions. Since
$(e+1)(d+2)>2tn_Q$, the total number of conditions is smaller than the
number of unknowns. Take a nonzero solution and replace it by its
squarefree part in $\kk[x,a]$. Neither degree bound increases.

The vanishing property is preserved after specialization: an irreducible
fiber equation dividing a specialized product divides at least one
specialized factor, or a factor specializes to the zero polynomial.
Removing repeated factors retains that vanishing factor. This proves
the claim even if the specialization of the squarefree part has repeated
factors or is zero.
\end{proof}

\begin{proof}[Proof of Lemma~\ref{lem:jets}]
Consider the derivation in the point variables
\[
 \Der_f=f_{x_2}\partial_{x_1}-f_{x_1}\partial_{x_2}.
\]
For every parameter $y$, the specialized derivation $\Der_{f,y}$
preserves the ideal of each reduced irreducible component of the fiber,
even when the fiber is not reduced.
Indeed, if $f(\,\cdot\,;y)=g^s h$, then cancellation of the two terms containing $s$
gives
\[
 \Der_{f,y}(g)=g^s(h_{x_2}g_{x_1}-h_{x_1}g_{x_2})\in(g).
\]
This polynomial identity is valid after every extension of the
coefficient field. It therefore also shows that the derivation at the
generic parameter $(z,\beta)$ preserves the ideal of $\Lambda$.
For each input parameter $y\in Q$, every iterate $\Der_{f,y}^jS(x,a_y)$
therefore vanishes on each component of $Z_y$. In
particular, every $y\in Q\cap C$ is a zero of
\[
 \Theta_j(a,b)=(\Der_f^jS)(q,a,b),\qquad
 \deg_{a,b}\Theta_j\le e+j(t-1).
\]
The degree estimate holds because differentiation in $x$ does not
increase parameter degree and the coefficients of $\Der_f$ have parameter
degree at most $t-1$.

B\'ezout gives $\mu\le td<p$. Work over $\Omega=\overline{\kk(z)}$.
In the following local calculation, specialize $a=z$ and $b=\beta$,
and restrict $S$ and $\Der_f$ to $\Lambda$.
If $f_{x_2}(q;z,\beta)\ne0$, take $u=x_1-q_1$; otherwise take
$u=x_2-q_2$. Smoothness shows that $u$ is a local uniformizer on
$\Lambda$, and $\Der_f(u)$ is a unit. The induced derivation of the local
ring extends continuously to its completion $\Omega[[u]]$ and has
the form $w(u)d/du$, where $w(u)=\Der_f(u)$ and $w(0)\ne0$. Thus,
\[
 S=c u^\mu+O(u^{\mu+1}),\quad c\ne0,\qquad
 \Der_f=w(u)\frac{d}{du},\quad w(0)\ne0.
\]
Consequently,
\[
 (\Der_f^\mu S)(q,z,\beta)=\mu!c\,w(0)^\mu\ne0.
\]
For $\mu=0$, this says $S(q,z)\ne0$. Hence, $\Theta_\mu$ is not
identically zero on $C$. Its zeros on $C$ include $Q\cap C$, so parameter
plane B\'ezout gives~\eqref{eq:jetbound}.
\end{proof}

\section{Proof of Theorem~\ref{thm:pure}}\label{sec:assembly}

We now combine interpolation with the contact bound and the structural
reductions of Section~\ref{sec:preliminaries}.

\begin{proof}[Proof of Theorem~\ref{thm:pure}]
\smallskip

We first prove the assertion for an irreducible mixed $f$. Let
$W=V(A_f)$ be the exceptional curve from Lemma~\ref{lem:W}, and put
$P'=P\setminus(W\cup B_f)$. First apply~\eqref{eq:Wcost} to the
points on $W$, and then apply Lemma~\ref{lem:twins} to
$P\setminus W$. The latter lemma bounds the incidences at $B_f$ and on the base
curves. Since $\deg A_f\le4t^2$, there is a parameter set $Q$ with
$|Q|=n_Q\le n$ such that
\begin{equation}\label{eq:initialcost}
 I(P,\CC)\le5t^2m+5t^3n+t\Iess(P',Q).
\end{equation}
If $n_Q=0$, this proves the required bound. Choose parameter coordinates
as in Section~\ref{sec:jets} for the finite set $P'$. This choice may
depend on $P'$ and is made before constructing the interpolating
polynomial. The geometric conclusion of Lemma~\ref{lem:W} is
invariant under this change.

Using the original bounds $m,n\ge1$, we choose the same interpolation
degrees for all color classes:
\begin{equation}\label{eq:degrees}
 d_0=(mn)^{1/3},\qquad
 d=\begin{cases}
 \lfloor d_0\rfloor,&p=\infty,\\
 \min\{\lfloor d_0\rfloor,\lceil p/t\rceil-1\},&p<\infty,
 \end{cases}
 \qquad e=\left\lfloor\frac{2tn_Q}{d+2}\right\rfloor.
\end{equation}
In positive characteristic, $p>t$ gives $\lceil p/t\rceil-1\ge1$.
Then $1\le d\le d_0$, $td<p$, and
\begin{equation}\label{eq:degreeopt}
 \frac1{d+2}\le\frac{1}{d_0}+\frac tp,
 \qquad me\le2t\left((mn)^{2/3}+\frac{t mn}{p}\right).
\end{equation}
Let $S$ be the polynomial given by Lemma~\ref{lem:interpolation}.

\smallskip

Color $P'$ as in Lemma~\ref{lem:twins}, and fix a color class $P_i$.
Let $z$ be transcendental over $\kk$, and put $\Omega=\overline{\kk(z)}$.
For each point $q\in P_i$ with an essential dual component, choose one
such component $C_q$ for which $|Q\cap C_q|$ is maximal. Every chosen
component dominates the $a$ axis. Choose an embedding
$\kk(C_q)\hookrightarrow\Omega$ sending $a$ to $z$, and write its
generic point as $(z,\beta_q)$. The fiber is smooth at $q$ by
Lemma~\ref{lem:W}. Let $\Lambda_q$ be its unique component through $q$. This component
is nonconstant, since $C_q$ is essential for $q$ and the fiber is
smooth at $q$. Put
\[
 P_i^{\mathrm{exc}}=\{q\in P_i:C_q\hbox{ was chosen and }S(x,z)|_{\Lambda_q}\equiv0\}.
\]
For $q\in P_i^{\mathrm{exc}}$, a defining polynomial of $\Lambda_q$
is a geometric irreducible factor of $S(x,z)$. Its degree is at most
$t$, since $\Lambda_q$ is a component of a member of the family.
Let $g_j\in\Omega[x]$ be pairwise nonassociate irreducible equations of
the distinct curves obtained in this way, and write $d_j=\deg g_j\le t$.
None is a scalar multiple of a polynomial over $\kk$. Their product
divides $S(x,z)$, so $\sum_jd_j\le\deg_xS(x,z)\le d$.
Thus, Lemma~\ref{lem:constantpoints} gives
$|P_i^{\mathrm{exc}}|\le\sum_j d_j^2\le td$. Using~\eqref{eq:pairbound}, we obtain
\begin{equation}\label{eq:Zsize}
 |P_i^{\mathrm{exc}}|\le td,
 \qquad \Iess(P_i^{\mathrm{exc}},Q)\le n_Q+t^5d^2.
\end{equation}

\smallskip

For each remaining selected point, let
$\mu_q=\ord_q(S(\,\cdot\,,z)|_{\Lambda_q})$. Apply Lemma~\ref{lem:norm} over
$\Omega$ to
\[
 \Phi(x;B)=f(x;z,B),\qquad F(x)=S(x,z).
\]
We verify the hypotheses of Lemma~\ref{lem:norm}:
\begin{itemize}[leftmargin=20pt]
\item As a polynomial in $x,b$ over $\kk[a]$, $f$ is primitive because
it is irreducible and depends on $x$. By Gauss's lemma, it remains
irreducible in $\kk(z)[x,b]$. Since it also depends on $b$, a second
application of Gauss's lemma gives irreducibility in $\kk(z,x)[b]$.
Its degree $r$ in $b$ satisfies $1\le r\le t<p$. Hence, it is
separable and its discriminant is nonzero.
\item Its coefficients in $x$, viewed in $\kk(z)[b]$, have gcd $1$:
a nonconstant common factor would contradict that irreducibility and
the dependence on $x$. This gcd remains $1$ over $\Omega[b]$.
Thus, $\Phi(x;c)$ is not identically zero for any $c\in\Omega$.
\item $q\notin B_f$ ensures $\Phi(q;B)\not\equiv0$.
Lemma~\ref{lem:W} gives $\nabla_x\Phi(q;\beta_q)\ne0$.
\item $S(x,z)$ is nonzero and squarefree over $\kk(z)$, by localization
of its squarefree factorization in $\kk[x,a]$. Its degree in $x$ is
less than $p$, so it is geometrically squarefree by
Remark~\ref{rem:squarefree}. If it is constant in $x$, then every
selected point is nonexceptional and has contact order zero. In this
case, Lemma~\ref{lem:jets} suffices. Otherwise,
Lemma~\ref{lem:norm} applies.
\end{itemize}
Therefore,
\begin{equation}\label{eq:sumcontact}
 \sum_{q\in P_i\setminus P_i^{\mathrm{exc}}\text{ selected}}\mu_q
 \le c_*|P_i|+td(2t+d)
 \le5t^3|P_i|+3t^2d^2,
\end{equation}
where $c_*=1+(3r+1)t^2\le5t^3$ and $d\ge1$.

\smallskip

Using Lemmas~\ref{lem:twins} and~\ref{lem:jets}, each nonexceptional
selected point satisfies
\[
 r_Q^*(q)\le t|Q\cap C_q|+t^3
 \le t^2e+t^2(t-1)\mu_q+t^3.
\]
Points with no essential component have $r_Q^*(q)\le t^3$. Combining these
bounds with~\eqref{eq:Zsize}--\eqref{eq:sumcontact}, we obtain
\[
 \Iess(P_i,Q)\le t^2e|P_i|+6t^6|P_i|+n_Q+4t^5d^2.
\]
There are at most $t^4$ color classes and their total size is at most
$m$. Therefore,
\[
 \Iess(P',Q)\le t^2em+6t^6m+t^4n+4t^9d^2.
\]
Substituting in~\eqref{eq:initialcost} gives
\[
 I(P,\CC)\le t^3em+11t^7m+6t^5n+4t^{10}d^2.
\]
By~\eqref{eq:degreeopt} and $d^2\le(mn)^{2/3}$, the right side is
at most
\[
 11t^{10}\left((mn)^{2/3}+m+n+\frac{mn}{p}\right).
\]
This proves the assertion for irreducible mixed $f$, with the stated
dependence on $t$.

\smallskip

For a general nonzero $f$, factor it over $\kk$ into at most $t$ distinct
irreducible factors. Every component of a nonzero fiber of $f$ is a
component of a nonzero fiber of at least one factor; assign each input
curve to one such factor. A factor depending only on the parameters gives
no curve in a nonzero fiber. A factor depending only on $x$ gives one
fixed irreducible curve and contributes at most $m$ incidences. For each mixed
factor, apply the bound just proved, using $n$ as an upper bound for
the number of assigned curves. Summing over at most $t$ factors proves the bound with
$C_t\le C_{\mathrm{abs}}t^{11}$ for an absolute constant $C_{\mathrm{abs}}$. The zero polynomial
has no nonzero fibers, and constant polynomials are immediate. The
small-characteristic estimate before~\eqref{eq:largechar} is also
covered by this choice of constant. This completes the proof of
Theorem~\ref{thm:pure}.
\end{proof}

\section{Proof of Theorem~\ref{thm:main}}\label{sec:boolean}

We pass from individual polynomial families to Boolean combinations of
equations. The $K_{k,k}$-free hypothesis bounds how many neighborhoods
can contain almost all points of a rich curve.

\begin{proof}[Proof of Theorem~\ref{thm:main}]
The case $k=1$ has no edges, so assume $k\ge2$. Let
$f_1,\ldots,f_s$, where $s\le t$, and $\Psi$ describe the graph as in
Section~1. We call each equation $f_i(x;y)=0$ an \emph{atom} of this
description. Let $\CC_Q$ be the set of distinct irreducible components
of all nonzero polynomials $f_i(\,\cdot\,;y)$, with $i\le s$ and
$y\in Q$. Then $|\CC_Q|\le t^2n$.

For $y\in Q$, define
\[
 \varepsilon(y)=
 \Psi\bigl([f_1(\,\cdot\,;y)\equiv0],\ldots,
           [f_s(\,\cdot\,;y)\equiv0]\bigr),
 \qquad Q_j=\{y\in Q:\varepsilon(y)=j\}\quad(j=0,1).
\]
Thus, $\varepsilon(y)$ is the adjacency value away from the zero sets
of the nonzero atom fibers. This divides the parameters into two classes,
even when the set of identically zero atoms varies within a class.

For $\Lambda\in\CC_Q$ and $y\in Q$, each atom is either identically zero
on $\Lambda$ or has at most $t\deg \Lambda\le t^2$ zeros there. Consequently, the
truth values of all atoms, and hence the adjacency value, are constant away
from at most $t^3$ points of $\Lambda$. Call $(y,\Lambda)$ \emph{full} if this
generic value is $1$, and \emph{sparse} otherwise. A full pair omits at
most $t^3$ points of $P\cap \Lambda$ from the neighborhood of $y$; a sparse
pair includes at most $t^3$ such points.

Call a curve $\Lambda$ \emph{rich} if $|P\cap \Lambda|\ge k(t^3+1)$. If $k$
distinct parameters are full on a rich curve, deleting their at most
$kt^3$ exceptional points leaves at least $k$ common neighbors. This
contradicts the $K_{k,k}$-free hypothesis. Thus, for every rich curve $\Lambda$, at most $k-1$ parameters
$y\in Q$ give a full pair $(y,\Lambda)$.

We first count the edges with parameter in $Q_0$. Every such edge lies
on a component of a nonzero atom fiber for its parameter, and there are
at most $t^2$ such components per parameter. Sparse pairs therefore
contribute at most $t^5|Q_0|$. Full pairs on curves that are not rich
contribute at most $k(t^3+1)t^2|Q_0|$, while full pairs on rich curves
contribute at most $(k-1)I(P,\CC_Q)$. Hence,
\begin{equation}\label{eq:booleanzero}
 |E\cap(P\times Q_0)|
 \le (k-1)I(P,\CC_Q)+(2k+1)t^5|Q_0|.
\end{equation}

Next consider $Q_1$. If $|Q_1|<k$, there are at most $(k-1)m$ edges
to count. Otherwise, choose $k$ parameters from $Q_1$. Every point
outside the zero sets of their nonzero atom fibers is adjacent to all
$k$ chosen parameters. There are therefore at most $k-1$ such points of $P$.
The remaining points are covered by at most $kt^2$ irreducible curves.
These are components of atom fibers of the chosen parameters, so they
belong to $\CC_Q$ and the preceding rich-curve bound applies to them.

On the covering curves, sparse pairs contribute at most $kt^5|Q_1|$.
Full pairs on curves that are not rich contribute at most
$k^2t^2(t^3+1)|Q_1|$. On rich covering curves, full pairs contribute
at most $k(k-1)t^2m$. The uncovered points contribute at most
$(k-1)|Q_1|$. An edge may be counted on more than one covering curve, which is
harmless for this upper bound. Since $k\ge2$
and $t\ge1$, these estimates, including the case $|Q_1|<k$, give
\begin{equation}\label{eq:booleanone}
 |E\cap(P\times Q_1)|
 \le k^2t^2m+3k^2t^5|Q_1|.
\end{equation}
Combining~\eqref{eq:booleanzero} and~\eqref{eq:booleanone}, we obtain
\[
 |E|\le (k-1)I(P,\CC_Q)+3k^2t^5(m+n).
\]
The argument also applies to constant Boolean formulas and empty lists
of nonzero atoms.

Finally, assign each curve of $\CC_Q$ to one atom in whose nonzero
fiber it occurs, and let $\CC_i$ be the set assigned to $f_i$. Each
$\CC_i$ has at most $tn$ elements. Apply Theorem~\ref{thm:pure}
separately to the original polynomials $f_i$. Its bound
$C_t\le C_{\mathrm{abs}} t^{11}$ gives
\begin{align*}
 I(P,\CC_Q)
 &\le C_{\mathrm{abs}} t^{11}\sum_{i=1}^s
 \left((m|\CC_i|)^{2/3}+m+|\CC_i|
                         +\frac{m|\CC_i|}{p}\right)\\
 &\le C_{\mathrm{abs}} t^{13}
 \left((mn)^{2/3}+m+n+\frac{mn}{p}\right).
\end{align*}
Since we apply the theorem to the original atoms, identically zero
specializations introduce no additional curves. Substitution
in the preceding edge estimate proves Theorem~\ref{thm:main}, with
$C(t,k)\le C_{\mathrm{abs}} k^2t^{13}$ for an absolute constant $C_{\mathrm{abs}}$.
\end{proof}

\section{Applications and sharpness}\label{sec:applications}

The main applications concern non-Cartesian polynomials on products
of planar sets (Corollary~\ref{cor:noncartesian}) and polynomial values
on difference sets (Section~\ref{sec:polynomialvalues}). We also derive
bounds for rich components, translates of a fixed curve, and polynomial
expansion, and conclude with sharpness examples.

\subsection{Cartesian products of two-dimensional sets}
\label{sec:cartesian}

Nassajian Mojarrad, Pham, Valculescu and de Zeeuw
\cite[Theorem~1.3]{NMPVdZ} studied polynomial zero sets on Cartesian
products of two planar sets. They proved that, for a non-Cartesian polynomial
$f\in\mathbb C[x_1,x_2,y_1,y_2]$ of degree at most $t$ and finite
sets $P,Q\subset\mathbb C^2$ of sizes $m,n$,
\[
 |V(f)\cap(P\times Q)|
 \ll_{t,\varepsilon}m^{2/3+\varepsilon}n^{2/3}+m+n.
\]
When both point sets are real, their bound holds with $\varepsilon=0$.
We first recall the non-Cartesian hypothesis.

Write $\kk=\overline{\F}$. A polynomial $f\in \F[x_1,x_2,y_1,y_2]$
is \emph{geometrically Cartesian} if there are irreducible curves
$\Lambda,C\subset\kk^2$ such that
$\Lambda\times C\subset V(f)$.
Equivalently, there are nonconstant irreducible polynomials
$g\in\kk[x_1,x_2]$ and $h\in\kk[y_1,y_2]$ such that
\begin{equation}\label{eq:cartesianform}
 f(x;y)=g(x)H(x;y)+h(y)J(x;y)
\end{equation}
for polynomials $H,J$ over $\kk$.
The displayed form implies the product inclusion.
Conversely, divide $f$ by $g$ in the point variables, treating the
parameter variables as coefficients, and use a monomial order that
respects total degree. Write $f=gH+R_g$, where no monomial of $R_g$ in
the point variables is divisible by the leading monomial of $g$.
For $y\in V(h)$, the product inclusion implies that $g$ divides
$R_g(\,\cdot\,;y)$, so the remainder property forces this specialization
to be zero. Thus, every coefficient of $R_g$ vanishes on $V(h)$ and is
divisible by $h$, proving~\eqref{eq:cartesianform}.
This is equivalent to the definition in \cite{NMPVdZ}, where $g$
and $h$ may be reducible: one may replace each by an irreducible factor.

\begin{corollary}[Non-Cartesian polynomials]\label{cor:noncartesian}
Let $\F$ be a field of characteristic $p$, with the usual convention
in characteristic zero. Suppose that $f\in \F[x_1,x_2,y_1,y_2]$ has
degree at most $t$ and is not geometrically Cartesian. Then, for
finite sets $P,Q\subset \F^2$ with $|P|=m$ and $|Q|=n$,
\[
 |\{(x,y)\in P\times Q:f(x;y)=0\}|
 \ll_t(mn)^{2/3}+m+n+\frac{mn}{p}.
\]
\end{corollary}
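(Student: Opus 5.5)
The plan is to reduce the zero count to Theorem~\ref{thm:pure}. Each zero pair is an incidence between $x\in P$ and a component of the fiber $Z_y(f)$. The non-Cartesian hypothesis is used to control parameters whose fibers share components.

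\textbf{Step 1 (identically zero fibers).} If $f(\cdot\,;y)\equiv0$ for some $y$, then every coefficient of $f$ in the point variables vanishes at $y$. By Lemma~\ref{lem:occurrence}, there are at most $t^2$ such $y$, unless these coefficients share a nonconstant factor $h(y)$. In that case $f=hJ$, and $f$ is geometrically Cartesian: take any irreducible $g$ with $H=0$. Thus these parameters contribute at most $t^2m$ pairs. Symmetrically, the points $x$ with $f(x;\cdot\,)\equiv0$ contribute at most $t^2n$ pairs.

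\textbf{Step 2 (reduction to distinct components).} For each remaining $y\in Q$, the zeros $x\in P$ with $f(x;y)=0$ lie on the at most $t$ irreducible components of $Z_y$. Let $\CC$ be the set of distinct components that arise, and for each $\Lambda\in\CC$ let $m_\Lambda=|\{y\in Q:\Lambda\subset Z_y\}|$. Then the zero count is at most $\sum_{\Lambda}m_\Lambda|P\cap\Lambda|$. The key claim is that non-Cartesianity forces $m_\Lambda\le t^2$ for every $\Lambda$. Indeed, in Lemma~\ref{lem:occurrence} the set $F_\Lambda$ is cut out by polynomials of degree at most $t$, excluding $\mathcal N_f$. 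If $F_\Lambda$ is infinite, its closure contains an irreducible curve $C$ with $\Lambda\times C\subset V(f)$, which is exactly the Cartesian configuration. Hence $F_\Lambda$ is finite, and Lemma~\ref{lem:occurrence} gives $|F_\Lambda|\le t^2$. Here the lemma is applied to $f$ itself rather than to an irreducible factor; its first assertion only needs the coefficient polynomials not to vanish identically, and that failure would mean $\Lambda$ divides $f$, again a Cartesian configuration.

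\textbf{Step 3 (conclusion).} The zero count is therefore at most $t^2I(P,\CC)+t^2(m+n)$, with $|\CC|\le tn$. Every $\Lambda\in\CC$ is a geometrically irreducible component of a nonzero fiber with parameter in $\F^2$, so $\CC\subset\family_{\F}(f)$, and Theorem~\ref{thm:pure} gives
\[
I(P,\CC)\ll_t (mn)^{2/3}+m+n+\frac{mn}{p}.
\]

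\textbf{Main obstacle.} I expect the main obstacle to be Step~2: making precise that an infinite $F_\Lambda$ yields a curve $C$ with $\Lambda\times C\subset V(f)$. The subtlety is that the closure of $F_\Lambda$ might meet $\mathcal N_f$; this is harmless, because on $\mathcal N_f$ the fiber vanishes identically and so still contains $\Lambda$. I would also check the reducible-$f$ case, where a factor of $f$ depending only on $x$ or only on $y$ gives a Cartesian configuration directly, so no separate case analysis is needed.
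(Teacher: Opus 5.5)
Your proposal is correct and follows the paper's proof step for step. Both arguments use the non-Cartesian hypothesis to show that $\mathcal N_f$ and each occurrence set are finite, since a curve inside either set gives $f=hJ$ or $f=gH+hJ$. Both then bound a finite common zero set of degree-$t$ polynomials by $t^2$, and apply Theorem~\ref{thm:pure} to the at most $tn$ distinct components. The differences are cosmetic. The paper describes $T_\Lambda=F_\Lambda\cup\mathcal N_f$ through the remainder of $f$ modulo $g$ rather than through the linear-membership conditions of Lemma~\ref{lem:occurrence}. Your separate removal of the points with $f(x;\cdot)\equiv0$ is not needed, but it does no harm.
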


\begin{proof}
We work over $\kk$. First observe that a finite common zero set in
$\kk^2$ of polynomials of degree at most $t$ has at most $t^2$ points.
Indeed, choose one nonzero polynomial among them.
No irreducible factor of this polynomial divides all the others,
since otherwise the common zero set would contain a curve. A generic
linear combination of the remaining polynomials is therefore coprime
to the chosen polynomial, and B\'ezout's theorem applies. A nonzero
constant among the polynomials instead gives the empty set.

Let $\mathcal N_f$ be the set of parameters $y\in\kk^2$ for which
$f(\,\cdot\,;y)\equiv0$. This is the common zero set of the
coefficients of $f$ in the point variables, each of degree at most
$t$. If it contained a curve $V(h)$, then $h$ would divide every
coefficient, giving $f=hJ$ and making $f$ geometrically Cartesian.
A positive-dimensional closed subset of the affine plane contains
a curve. Hence, $\mathcal N_f$ is finite and $|\mathcal N_f|\le t^2$.

Next, fix an irreducible curve $\Lambda=V(g)$ that occurs in a nonzero
fiber. Let
\[
 T_\Lambda=\{y\in\kk^2:g\mid f(\,\cdot\,;y)\}.
\]
As in the division argument above, the remainder of $f$ modulo $g$
has coefficients in $\kk[y_1,y_2]$ of degree at most $t$: division
in the point variables only forms constant linear combinations of
their original coefficients. The common zero set of these remainder
coefficients is exactly $T_\Lambda$, including the zero fibers.
Thus, $T_\Lambda=F_\Lambda\cup\mathcal N_f$, where $F_\Lambda$ is defined
by the same occurrence condition as in Section~\ref{sec:structure}.
If $T_\Lambda$ contained a curve $V(h)$, the same argument would give
$f=gH+hJ$, contrary to the hypothesis. Thus, $T_\Lambda$ is finite,
and the preceding bound gives
\[
 |T_\Lambda|\le t^2.
\]

Let $\CC_Q$ be the set of all distinct irreducible components
of the nonzero fibers indexed by $Q\setminus \mathcal N_f$. Each such fiber
has at most $t$ components, so $n_{\CC}:=|\CC_Q|\le tn$. Each component
occurs for at most $t^2$ parameters. Counting incidences separately
on each component, we obtain
\begin{align*}
 |\{(x,y)\in P\times Q:f(x;y)=0\}|
 &\le t^2m+t^2 I(P,\CC_Q)\\
 &\ll_t m+(m n_{\CC})^{2/3}+m+n_{\CC}+\frac{m n_{\CC}}{p}\\
 &\ll_t(mn)^{2/3}+m+n+\frac{mn}{p},
\end{align*}
where the second line is Theorem~\ref{thm:pure}. If $\CC_Q$ is
empty, the first line already proves the result.
\end{proof}

The non-Cartesian hypothesis is necessary for a bound on
point--parameter pairs. For example, the polynomial
\[
 f(x_1,x_2;a,b)=x_1(x_2-a)+b
\]
is irreducible, because it is monic and linear in $b$, and is
$(x_1,b)$-Cartesian. If $A,B\subset \F$ are finite and
\[
 P=\{(0,v):v\in B\},\qquad Q=\{(u,0):u\in A\},
\]
then $f$ vanishes on all of $P\times Q$. The fibers indexed by $Q$
are distinct, but each is the union of the same vertical line
$x_1=0$ and a horizontal line $x_2=u$. Their distinct irreducible
components give at most $|B|+|A\cap B|$ incidences with $P$.
Thus, this example does not contradict Theorem~\ref{thm:pure}, which
counts each irreducible component once. Over an infinite field of
characteristic zero, taking $|A|=|B|$ arbitrarily large also shows why
the hypothesis cannot be dropped from Corollary~\ref{cor:noncartesian}.

Both the theorem in \cite{NMPVdZ} and
Corollary~\ref{cor:noncartesian} include the point--line relation
$x_2-a x_1-b=0$. This polynomial is non-Cartesian: otherwise a curve
in the parameter plane would index infinitely many distinct lines
all containing a fixed curve in the point plane, which is impossible.
Over $\mathbb C$, Corollary~\ref{cor:noncartesian} removes the
$\varepsilon$ loss in \cite[Theorem~1.3]{NMPVdZ}. It also gives a
bound over arbitrary fields, with the additional term $mn/p$.
Theorem~\ref{thm:pure} needs no non-Cartesian hypothesis because
it counts distinct components rather than parameters.

\subsection{Rich components}

We next bound the number of components containing many points of a
fixed set. The lower bound on $\kappa$ allows us to absorb the linear
and characteristic terms in Theorem~\ref{thm:pure}.

\Needspace{16\baselineskip}
\begin{corollary}[Rich components]\label{cor:richcomponents}
Let $\F$ have characteristic $p$, with $p=\infty$ in characteristic
zero. Suppose that $f\in \F[x_1,x_2,y_1,y_2]$ has total degree at most
$t$, and let $P\subset \F^2$ have $N\ge1$ elements. There is a constant
$A_t\ge1$ such that, for every integer
\[
 \kappa\ge A_t\max\{1,N/p\},
\]
the set
\[
 \CC_{\ge\kappa}(f;P)=\{\Lambda\in\family_{\F}(f):|P\cap \Lambda|\ge \kappa\}
\]
is finite and satisfies
\[
 |\CC_{\ge\kappa}(f;P)|\ll_t \frac{N^2}{\kappa^3}+\frac N\kappa.
\]
Here the curves are counted as distinct geometrically irreducible
components, even if a component occurs in several fibers. In particular,
when $N\le p$, the conclusion holds for every $\kappa\ge A_t$.
\end{corollary}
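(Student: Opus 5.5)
The plan is the standard rich-curve argument: apply Theorem~\ref{thm:pure} to an arbitrary finite subfamily of rich components and compare with the trivial lower bound on incidences. Since the bound will not depend on the size of the subfamily, finiteness of $\CC_{\ge\kappa}(f;P)$ also follows.

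\textbf{Setup.} Let $\CC\subset\CC_{\ge\kappa}(f;P)$ be any finite subset, with $|\CC|=n$. By definition each curve of $\CC$ contains at least $\kappa$ points of $P$, so
\[
 \kappa n\le I(P,\CC)\le C_t\Bigl((Nn)^{2/3}+N+n+\frac{Nn}{p}\Bigr).
\]
The upper bound is Theorem~\ref{thm:pure}, applied with $m=N$. The curves of $\CC$ are distinct elements of $\family_{\F}(f)$, as that theorem requires.

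\textbf{Absorbing the linear terms.} Choose $A_t\ge 4C_t$. Then $\kappa\ge A_t$ gives $C_tn\le\kappa n/4$. In positive characteristic, $\kappa\ge A_tN/p$ gives $C_tNn/p\le\kappa n/4$; in characteristic zero this term is $0$. Moving both terms to the left leaves
\[
 \tfrac12\kappa n\le C_t\bigl((Nn)^{2/3}+N\bigr).
\]
So one of the two terms on the right is at least $\kappa n/4$:
\begin{itemize}
\item if $\kappa n\le 4C_t(Nn)^{2/3}$, then $n^{1/3}\ll_t N^{2/3}/\kappa$, that is, $n\ll_t N^2/\kappa^3$;
\item otherwise $\kappa n\le 4C_tN$, so $n\ll_t N/\kappa$.
\end{itemize}
In either case $n\ll_t N^2/\kappa^3+N/\kappa$.

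\textbf{Finiteness.} This bound is uniform over all finite subfamilies $\CC$. Hence $\CC_{\ge\kappa}(f;P)$ is finite and obeys the same bound. If $N\le p$, then $\max\{1,N/p\}=1$, so the hypothesis reduces to $\kappa\ge A_t$.

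\textbf{Main obstacle.} The only delicate step is the finiteness assertion: Theorem~\ref{thm:pure} is stated only for a finite set of $n$ curves. That is why the argument applies it to arbitrary finite subfamilies and uses the uniformity of the resulting bound, instead of applying it to $\CC_{\ge\kappa}(f;P)$ directly. The case $f=0$, or any case with no nonzero fibers, is vacuous. Beyond this, the argument is routine; Theorem~\ref{thm:pure} does all the real work.
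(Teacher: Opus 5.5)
Your proposal is correct and follows essentially the same argument as the paper. Both apply Theorem~\ref{thm:pure} to an arbitrary finite subfamily and take $A_t=4C_t$. Both absorb the $n$ and $Nn/p$ terms using $C_t(1+N/p)\le\kappa/2$, split into the two cases $n\ll_t N^2/\kappa^3$ and $n\ll_t N/\kappa$, and then deduce finiteness from the uniformity of the bound over finite subfamilies.
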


\begin{proof}
Let $C_t\ge1$ be a constant for Theorem~\ref{thm:pure}, and take
$A_t=4C_t$. For any finite subfamily
$\CC\subset\CC_{\ge\kappa}(f;P)$ of size $n$, that theorem gives
\[
 \kappa n\le I(P,\CC)
 \le C_t\left(N^{2/3}n^{2/3}+N+n+\frac{Nn}{p}\right).
\]
Since $C_t(1+N/p)\le \kappa/2$, the last two terms can be absorbed into
the left-hand side, giving
\[
 \kappa n\le2C_t\bigl(N^{2/3}n^{2/3}+N\bigr).
\]
If $n>0$, at least one of the two terms on the right is at least
$\kappa n/2$. Hence, either
\[
 n\le(4C_t)^3\frac{N^2}{\kappa^3}
 \qquad\text{or}\qquad
 n\le4C_t\frac N\kappa.
\]
The bound holds for every finite subfamily. Thus,
$\CC_{\ge\kappa}(f;P)$ is finite and satisfies the same bound;
otherwise it would contain finite subfamilies of arbitrarily large size.
\end{proof}

\subsection{Translates of a fixed curve}

The degree assumption in the following corollary ensures that
different translation vectors give different curves.

\begin{corollary}[Translates of a fixed curve]\label{cor:translates}
Let $\F$ be a field of characteristic $p$, with $p=\infty$ in
characteristic zero, and let $\Lambda_0\subset\overline{\F}^{\,2}$ be a
geometrically irreducible plane curve defined over $\F$, of degree
$D$ with $2\le D<p$. If $P,Q\subset \F^2$, $|P|=m$, and $|Q|=n$, then
\[
 |\{(q,y)\in P\times Q:q-y\in \Lambda_0\}|
 \ll_D (mn)^{2/3}+m+n+\frac{mn}{p}.
\]
\end{corollary}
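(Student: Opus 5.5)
Let $g_0\in\F[x_1,x_2]$ be an irreducible equation of $\Lambda_0$, geometrically irreducible of degree $D$. I will apply Theorem~\ref{thm:pure} to the family
\[
 f(x;y)=g_0(x-y),
\]
which has total degree $D$. For every $y$, the fiber $Z_y(f)=\Lambda_0+y$ is a nonzero, geometrically irreducible curve. So for $y\in Q$ the translates $\Lambda_0+y$ belong to $\family_{\F}(f)$. The number of pairs to be counted is exactly $\sum_{y\in Q}|P\cap(\Lambda_0+y)|$.

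The main step is to show that distinct $y$ give distinct curves; this is where $2\le D<p$ enters. Suppose $\Lambda_0+v=\Lambda_0$ with $v\ne0$. Then $g_0(x+v)=\lambda g_0(x)$, and comparing top-degree homogeneous parts gives $\lambda=1$. I will first change linear coordinates so that $v=(1,0)$. Then $g_0(x_1+1,x_2)=g_0(x_1,x_2)$. Iterating gives $g_0(x_1+j,x_2)=g_0(x_1,x_2)$ for all $j$ in the prime subfield. Write $g_0=\sum_i c_i(x_2)x_1^i$ and let $i_0\ge1$ be the largest exponent of $x_1$. If $i_0=0$, then $g_0$ depends only on $x_2$, and geometric irreducibility forces it to be linear, contradicting $D\ge2$. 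If $i_0\ge1$, the coefficient of $x_1^{i_0-1}$ in $g_0(x_1+1,x_2)-g_0(x_1,x_2)$ is $i_0c_{i_0}(x_2)$. This is nonzero because $1\le i_0\le D<p$, which is a contradiction. Hence the stabilizer of $\Lambda_0$ under translations is trivial. I would double-check the reduction to $v=(1,0)$; it is a harmless linear substitution preserving degree. Alternatively one can argue directly with the directional derivative $v\cdot\nabla g_0$, which is nonzero of smaller degree when $D<p$ unless $g_0$ is a polynomial in one linear form, which forces $D=1$.

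Consequently the $n$ curves $\{\Lambda_0+y:y\in Q\}$ are distinct members of $\family_{\F}(f)$. Theorem~\ref{thm:pure} with $t=D$ then bounds the incidence count by $C_D\bigl((mn)^{2/3}+m+n+mn/p\bigr)$, which completes the proof. The only real obstacle is the distinctness step. The rest is a direct invocation, since $f$ need not be irreducible or mixed for Theorem~\ref{thm:pure}. In fact $f$ is mixed and, being geometrically irreducible in each fiber, presents no issues anyway.
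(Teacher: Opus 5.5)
Your proposal is correct and follows the paper's proof essentially step for step. Both arguments show that no nonzero translate fixes $\Lambda_0$: normalize the scalar via top-degree parts, move $v$ to $(1,0)$, and read off the nonzero coefficient $i_0c_{i_0}(x_2)$ of $x_1^{i_0-1}$ using $i_0\le D<p$, with $i_0=0$ excluded by geometric irreducibility and $D\ge2$; distinctness then lets you apply Theorem~\ref{thm:pure} to $f(x;y)=g_0(x-y)$ with $t=D$, exactly as the paper does.
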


\begin{proof}
Choose an absolutely irreducible defining polynomial
$F\in \F[X,Y]$ of degree $D$. We first show that $\Lambda_0+v=\Lambda_0$ is
impossible for $v\in \F^2\setminus\{0\}$. After an invertible linear
change of coordinates, such an equality would give
$V(F(X+1,Y))=V(F(X,Y))$ over $\overline{\F}$.
The two irreducible polynomials are then scalar multiples. Comparing
their highest homogeneous parts shows that
$F(X+1,Y)=F(X,Y)$. If $h=\deg_XF>0$, the coefficient of $X^{h-1}$
in $F(X+1,Y)-F(X,Y)$ is $h$ times the leading coefficient of $F$
as a polynomial in $X$. It is nonzero because $h\le D<p$.
Thus, $F$ depends only on $Y$. Absolute irreducibility then forces
$D=1$, a contradiction.

Consequently, $\{\Lambda_0+y:y\in Q\}$ consists of $n$ distinct curves.
These are members of the two-parameter family $f(x;y)=F(x-y)$,
whose total degree is $D$. Applying Theorem~\ref{thm:pure} proves
the assertion.
\end{proof}

\subsection{Polynomial values on difference sets}\label{sec:polynomialvalues}

We now extend the distinct-distance bound to polynomial functions.
Let $\F$ have characteristic $p$,
with the usual convention in characteristic zero, and write
$\kk=\overline{\F}$. For a nonconstant polynomial $F\in \F[U,V]$ of
total degree $D<p$ and a nonempty finite set $P\subset \F^2$, put
\[
 \Delta_F(P)=F(P-P)=\{F(x-y):x,y\in P\},\qquad N=|P|.
\]
The set $\Delta_F(P)$ includes all values, including zero, and $F$
need not be symmetric under $z\mapsto-z$. We say that $F$ is constant
on a line if its polynomial restriction to that line is constant.
All implied constants depend only on $D$.

\Needspace{10\baselineskip}
\begin{corollary}[General polynomial values]\label{cor:polynomialvalues}
Suppose $2\le D<p$ and
\begin{equation}\label{eq:poly-nonlinearform}
 F(U,V)\ne G(aU+bV)
 \quad\text{for all }G\in \F[T],\quad (a,b)\in \F^2\setminus\{0\}.
\end{equation}
If $P$ is not contained in a union of at most $D-1$ parallel affine
lines, then
\begin{equation}\label{eq:polyvalues}
 |\Delta_F(P)|\gg_D\min\{N^{2/3},p\}.
\end{equation}
In particular, $|\Delta_F(P)|\gg_D N^{2/3}$ when $N\le p^{3/2}$.
\end{corollary}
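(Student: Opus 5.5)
The plan is to run the standard Elekes-type energy argument, with Theorem~\ref{thm:pure} (or Corollary~\ref{cor:translates}) supplying the incidence bound. For $\lambda\in\Delta_F(P)$ we consider the level curve $\Gamma_\lambda=V(F-\lambda)$ and its translates $y+\Gamma_\lambda$ for $y\in P$. Every pair $(x,y)\in P^2$ gives an incidence $x\in y+\Gamma_{F(x-y)}$. Hence
\[
N^2=\sum_{\lambda\in\Delta_F(P)}\bigl|\{(x,y)\in P^2:x-y\in\Gamma_\lambda\}\bigr|.
\]
We use the family $f(x;y,\lambda)$, but with only two parameters; three parameters are not available. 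So the idea is to fix $\lambda$ and count incidences with the translates of each component of $\Gamma_\lambda$. This requires a bound that is uniform in $\lambda$. We will view
\[
f_\lambda(x;y)=F(x-y)-\lambda
\]
as a two-parameter family of degree $D$, so that the constant from Theorem~\ref{thm:pure} depends only on $D$.

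First, we sort out the degenerate components. Let $\Lambda$ be a component of $\Gamma_\lambda$. If $\Lambda$ is not a line, we show that it has trivial stabilizer under translation, as in Corollary~\ref{cor:translates}. That argument used only irreducibility and degree $<p$; a translation-invariant irreducible curve is a line. Then the curves $y+\Lambda$, $y\in P$, are $N$ distinct curves in $\family(f_\lambda)$, and Theorem~\ref{thm:pure} gives
\[
\ll_D N^{4/3}+N+N^2/p
\]
incidences. Summing over at most $D$ components gives the same bound for each $\lambda$.

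If $\Lambda$ is a line with direction $v$, its translates are lines parallel to $v$, and the distinct ones are parametrized by $P$ modulo $v$. Their incidences with $P$ are exactly $\sum_\ell |P\cap\ell|^2$ over lines $\ell$ parallel to $v$. A level-set component is a line of direction $v$ only if $F$ is constant along direction $v$ restricted to that line. The key algebraic step, which uses \eqref{eq:poly-nonlinearform} and $D<p$, is to show that only $O_D(1)$ directions $v$ can occur across all $\lambda$. Such directions are roots of the top homogeneous part $F_D$ or directions along which $\partial_v F$ factors suitably. Moreover, for each such direction, at most $O_D(1)$ values $\lambda$ produce lines of that direction, unless $F$ is a function of one linear form. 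So line components contribute at most $O_D(1)\cdot\max_v\sum_\ell|P\cap\ell|^2$ in total, not per $\lambda$.

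This is where the hypothesis on parallel lines enters, and it is the main obstacle. If $P$ has at least $D$ points off any fixed $D-1$ parallel lines, then for a bad direction $v$ the richest lines must be handled by a different count. We would try to use the exceptional values differently: the pairs lying on a common line of direction $v$ realize $F$ restricted to that line. That restriction is a nonconstant polynomial of degree $\le D$ in one variable on a difference set, except for the $O_D(1)$ lines on which $F$ is constant. Then either many pairs lie on non-level lines, in which case a one-dimensional count already gives $|\Delta_F(P)|\gg\min\{N^{2/3},p\}$ by hand, or the collinear pairs are only $O(N^{4/3})$.

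Granting this, we get
\[
N^2\le|\Delta_F(P)|\cdot C_D\bigl(N^{4/3}+N+N^2/p\bigr)+O_D(\text{bounded line contribution}).
\]
Therefore
\[
|\Delta_F(P)|\gg_D\min\{N^{2/3},p,N\}=\min\{N^{2/3},p\}.
\]
The final remark follows because $N\le p^{3/2}$ gives $N^{2/3}\le p$.
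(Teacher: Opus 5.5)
Your architecture matches the paper's. Each fiber $F-\lambda$ splits into nonlinear components, which you handle uniformly in $\lambda$ by translates and Theorem~\ref{thm:pure}, and line components, which you treat separately. The nonlinear part is fine. However, there is a genuine gap: the step you flag as ``the main obstacle'' is left at ``we would try\ldots granting this'', and that step is where the content of the corollary lies. Your displayed inequality is useful only if the line components absorb at most about $N^2/2$ ordered pairs. Your bound $O_D(1)\max_v\sum_\ell|P\cap\ell|^2$ can be of order $N^2$. For example, take $F=UV$, which satisfies \eqref{eq:poly-nonlinearform}, and let $P$ consist of $N-1$ points on the $x_1$-axis and one point off it. Nearly all pairs then have difference on the constant line $V=0$.

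Your suggested repair looks at the wrong pairs. Pairs on a common line of direction $v$ have differences in $\kk v$, so they only see $F$ on the line through the origin. That line may itself be a constant line, as it is for $UV$. Also, the hypothesis gives only one point of $P$ off any $D-1$ parallel lines, not $D$ points.

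The missing argument is a case split on $M_P=\max_\ell|P\cap\ell|$. First, prove that $F$ has at most $D-1$ constant lines in each direction and at most $D$ in total. You only asserted a weaker version. The equation of a constant line parallel to $v$ divides $\partial_vF$, which is nonzero of degree at most $D-1$ by \eqref{eq:poly-nonlinearform} and $D<p$. Two nonparallel constant lines share their value at their intersection, which puts all constant lines in a single fiber. Each constant line carries at most $NM_P$ ordered differences. So if $M_P\le N/(2D)$, at least $N^2/2$ differences lie on nonlinear components and your energy inequality closes.

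If $M_P>N/(2D)$, take $\ell=a+\F v$ with $a\in P$ and $|P\cap\ell|=M_P$. The points $z\in P$ for which $s\mapsto F(z-a-sv)$ is constant lie on at most $D-1$ lines parallel to $v$. The hypothesis on $P$ therefore supplies a $z$ with nonconstant restriction. The differences $z-x$, $x\in P\cap\ell$, then give at least $M_P/D>N/(2D^2)$ values. This second case is exactly where the parallel-line hypothesis is used, and it uses pairs with one point off $\ell$, not pairs on $\ell$.
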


The fibers need not be irreducible or smooth. We estimate the
contribution of their nonlinear components by
Corollary~\ref{cor:translates} and treat their line components
separately. The following lemma gives the required bounds.

\begin{lemma}[Nonlinear components and constant lines]\label{lem:polyfibers}
Let $F\in \F[U,V]$ be nonconstant of degree $D<p$.
For $\rho\in \F$, let $\nu_{F,P}^{\mathrm{nl}}(\rho)$ count the ordered
pairs $(x,y)\in P^2$ for which $x-y$ lies on a nonlinear
geometrically irreducible component of $F-\rho=0$. Then
\begin{equation}\label{eq:poly-nonlinearpairs}
 \nu_{F,P}^{\mathrm{nl}}(\rho)
 \ll_D N^{4/3}+\frac{N^2}{p}.
\end{equation}
If~\eqref{eq:poly-nonlinearform} holds, there are at most $D$
geometric affine lines on which $F$ is constant, and at most $D-1$
such lines in any one direction.
\end{lemma}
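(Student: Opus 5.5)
For the first assertion I will reduce to Corollary~\ref{cor:translates}. Work over $\kk$. The curve $F-\rho=0$ has degree $D$, so it has at most $D$ geometrically irreducible components, and at most $D$ of them are nonlinear, each of degree between $2$ and $D<p$. I will bound the pairs counted by one such component $\Lambda_0$ and then sum over components, losing a factor $D$. The difficulty is that $\Lambda_0$ need not be defined over $\F$, while Corollary~\ref{cor:translates} asks for a curve defined over $\F$. There are two ways around this. The first is to note that Theorem~\ref{thm:pure} and its proof work over $\kk$: Lemma~\ref{lem:twins} and the surrounding argument use $\CC\subset\family_\kk(f)$. Moreover, the translation argument in Corollary~\ref{cor:translates}, which shows that $\Lambda_0+v\neq\Lambda_0$ for $v\ne0$, only needs an absolutely irreducible equation over $\kk$ of degree $2\le\deg<p$. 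So I will apply that corollary with $\F$ replaced by $\kk$ and with $P,Q\subset\F^2\subset\kk^2$, taking $Q=P$. The characteristic is unchanged, so the term $mn/p$ with $m=n=N$ gives exactly \eqref{eq:poly-nonlinearpairs}. The second way avoids the issue altogether: apply Theorem~\ref{thm:pure} to $f(x;y)=F(x-y)-\rho$, whose degree is at most $D$. The curves $\Lambda_0+y$ with $y\in P$ are distinct components of members of $\family(f)$, and the incidence count is $\sum_{y}|P\cap(\Lambda_0+y)|$, which is exactly the number of pairs counted for $\Lambda_0$. Either way the result is $\ll_D (N^2)^{2/3}+N+N^2/p$, and $N\le N^{4/3}$.

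For the second assertion, suppose $F$ is constant, with value $c$, on the line $\ell=\{z_0+s w\}$. Then $\ell$ is a component of $F=c$, so the linear form defining $\ell$ divides $F-c$. The main step is to show that constant lines in a fixed direction $w$ number at most $D-1$ unless $F$ is a polynomial in one linear form. Change coordinates over $\kk$ so that $w=(0,1)$. Then every such line is $U=u_j$, and $F(u_j,V)$ is constant in $V$. Write $F=\sum_{k} a_k(U)V^k$. For each $k\ge1$, the coefficient $a_k$ vanishes at every $u_j$, and $\deg a_k\le D-k\le D-1$. If there are at least $D$ distinct values $u_j$, every $a_k$ with $k\ge1$ vanishes, so $F=a_0(U)$ is a polynomial in the single linear form $U$. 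This contradicts \eqref{eq:poly-nonlinearform}, but only once I know the linear form can be taken over $\F$. That holds because $F\in\F[U,V]$ depends only on $\ell^\perp$. Its top homogeneous part is then a power of a linear form times a constant, and since $D<p$ the form is $\F$-rational, for instance via a suitable partial derivative. I expect this rationality check to be the main obstacle.

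For the total count of at most $D$ constant lines, let $F_D$ be the top homogeneous part. A line in direction $w$ on which $F$ is constant forces $F_D(w)=0$ when $D\ge1$, because the leading coefficient of $F(z_0+sw)$ in $s$ is $F_D(w)$. This gives at most $D$ directions, but the bound needed is on lines, not directions. So I will count with multiplicity. Let direction $w$ carry $k_w$ constant lines. Each of those linear forms divides the derivative $\partial_w F$, which is nonzero because $F$ is not a function of $\ell^\perp$ alone. In the coordinates above, $\partial_V F=\sum_k k\,a_k(U)V^{k-1}$ has all its coefficients vanishing at the $u_j$. Applying this to $F_D$ shows that $(w^\perp)^{k_w}$ divides $F_D$. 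Summing over directions gives $\sum_w k_w\le\deg F_D=D$.
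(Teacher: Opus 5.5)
Your proof is correct, and the first assertion is handled exactly as in the paper: Corollary~\ref{cor:translates} is applied over $\kk$ with both finite sets equal to $P$. Your second route through Theorem~\ref{thm:pure} is the same argument unpacked. It still needs the distinctness of translates over $\kk$, which you supply.

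For the bound of $D-1$ lines in one direction, you argue with the coefficients $a_k(U)$ of $V^k$ in adapted coordinates. The paper instead shows that every directional derivative $\partial_vF$ is nonzero: a $\kk$-linear dependence between $F_U$ and $F_V$ descends to $\F$, and then $D<p$ applies. Each constant line parallel to $v$ then divides $\partial_vF$, which has degree at most $D-1$. The content is the same. In your version $D<p$ enters only at the rationality step, where $F_D=cL^D$ forces $L$ to be $\F$-rational.

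The genuine difference is the total count. The paper observes that two nonparallel constant lines have the same value at their intersection point. Every other constant line meets one of them, so all constant lines are line components of a single fiber $F-\rho$, giving at most $D$. You instead show that $k_w$ parallel constant lines in direction $w$ force $(w^\perp)^{k_w}\mid F_D$, and then use unique factorization of the binary form $F_D$.

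Your step ``applying this to $F_D$'' is compressed and should be spelled out as follows. Each $a_k$ with $k\ge1$ is divisible by $\prod_j(U-u_j)$. Hence its coefficient of $U^{D-k}$ can be nonzero only if $D-k\ge k_w$. Together with $k_w\le D$ for the $k=0$ term, this gives $U^{k_w}\mid F_D$.

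Your route proves a finer statement: the directions of constant lines, counted with multiplicity, lie among the linear factors of the leading form, so in particular $F_D(w)=0$. The paper's route yields a different structural fact: when the constant lines are not all parallel, they all lie in one level set of $F$.
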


\begin{proof}
For each nonlinear geometrically irreducible component $\Lambda$ of
$F-\rho=0$,
apply Corollary~\ref{cor:translates} over $\kk$, with both finite
sets equal to $P$. Its degree lies between $2$ and $D<p$, so
\[
 |\{(x,y)\in P^2:x-y\in\Lambda\}|
 \ll_D N^{4/3}+N+\frac{N^2}{p}.
\]
There are at most $D$ components. Summing over the nonlinear ones,
and absorbing $N$ into $N^{4/3}$, proves
\eqref{eq:poly-nonlinearpairs}. Working over $\kk$ allows components
that are not defined over $\F$. Pairs counted on more than one component
only increase the upper bound.

Assume now~\eqref{eq:poly-nonlinearform}. Every directional derivative
$\partial_vF=v_1F_U+v_2F_V$, with $v\in\kk^2\setminus\{0\}$,
is nonzero.
Indeed, a relation $v_1F_U+v_2F_V=0$ is a linear dependence between
coefficient vectors with entries in $\F$. If such a relation exists
over $\kk$, a nonzero one exists over $\F$. After an invertible
$\F$-linear change of coordinates, it says that the derivative in
one coordinate vanishes. The restriction $D<p$ then makes $F$
independent of that coordinate, contrary to
\eqref{eq:poly-nonlinearform}.

Call a line over $\kk$ a constant line of $F$ if the restriction
of $F$ to it is constant. The defining linear polynomial of each
constant line parallel to $v$ divides $\partial_vF$, which is
nonzero and has degree at most $D-1$. Hence, at most $D-1$ constant
lines have that direction. If all constant lines are parallel, this also proves the
total bound. Otherwise choose two nonparallel constant lines. Their
values agree at their intersection, say with value $\rho$. Every
other constant line meets at least one of them and so has the same
value. All constant lines are therefore distinct line components of
the single polynomial $F-\rho$, giving at most $D$ in total.
\end{proof}

\begin{proof}[Proof of Corollary~\ref{cor:polynomialvalues}]
Write
\begin{equation}\label{eq:poly-M}
 M_P=\max_{\ell}|P\cap\ell|,
\end{equation}
where $\ell$ ranges over affine $\F$-lines. A geometric line also
contains at most $M_P$ points of $P$: if it contains two $\F$-points,
it is defined over $\F$, and otherwise the assertion follows from
$M_P\ge1$. By Lemma~\ref{lem:polyfibers}, at most $DNM_P$ ordered
differences lie on any constant line of $F$.

If $M_P\le N/(2D)$, at least $N^2/2$ differences avoid all these
lines and hence lie on nonlinear components of their respective
fibers. Thus,
\[
 \frac{N^2}{2}
 \le\sum_{\rho\in\Delta_F(P)}\nu_{F,P}^{\mathrm{nl}}(\rho)
 \ll_D |\Delta_F(P)|\left(N^{4/3}+\frac{N^2}{p}\right).
\]
This gives~\eqref{eq:polyvalues}, since
\begin{equation}\label{eq:poly-harmonic}
 \frac{N^2}{N^{4/3}+N^2/p}
 =\frac1{N^{-2/3}+p^{-1}}
 \ge\frac12\min\{N^{2/3},p\}.
\end{equation}

Otherwise choose a line $\ell=a+\F v$ containing $M_P>N/(2D)$
points of $P$, with $a\in P$ and $v\in \F^2\setminus\{0\}$.
For $z\in P$, the polynomial $s\mapsto F(z-a-sv)$ is constant
exactly when $z-a+\kk v$ is a constant line of $F$.
There are at most $D-1$ constant lines parallel to $v$. Thus, the
points $z$ for which this restriction is constant lie on at most
$D-1$ parallel lines over $\kk$.
Each of those meeting $P$ is defined over $\F$, since it has a
$\F$-point and direction $v\in \F^2$. The hypothesis on $P$ supplies
a $z\in P$ for which $s\mapsto F(z-a-sv)$ is nonconstant.

The $M_P$ parameters with $a+sv\in P\cap\ell$ give at least
$M_P/D$ values: a nonconstant polynomial of degree at most $D$
takes any one value at at most $D$ parameters. All these values
belong to $\Delta_F(P)$, so
\[
 |\Delta_F(P)|\ge\frac{M_P}{D}>\frac{N}{2D^2}
 \ge\frac1{2D^2}\min\{N^{2/3},p\}.
\]
\end{proof}

The same argument gives a quantitative estimate for every nonconstant
polynomial, including polynomials in one linear form.

\begin{corollary}[A bound in terms of collinear points]\label{cor:polycollinear}
For every nonconstant $F\in \F[U,V]$ of degree $D<p$, with $M_P$
as in~\eqref{eq:poly-M},
\[
 |\Delta_F(P)|\gg_D
 \min\left\{N^{2/3},p,\frac{N}{M_P}\right\}.
\]
Consequently, $M_P\le N^{1/3}$ implies~\eqref{eq:polyvalues}
without condition~\eqref{eq:poly-nonlinearform}.
\end{corollary}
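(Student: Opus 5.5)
We follow the proof of Corollary~\ref{cor:polynomialvalues}, but drop hypothesis~\eqref{eq:poly-nonlinearform}. The split will again be on whether $M_P$ is small or large. First we handle the constant lines of $F$ without~\eqref{eq:poly-nonlinearform}. If $F$ has constant lines in two nonparallel directions, the argument of Lemma~\ref{lem:polyfibers} shows that all of them are line components of a single fiber $F-\rho$, so there are at most $D$. If all constant lines are parallel to one direction $v$, there may be infinitely many. This happens, for example, when $F=G(aU+bV)$, where every level line is constant.

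\emph{Case $M_P\le N/(2D)$, with only finitely many constant lines.} There are at most $D$ constant lines. Each geometric line contains at most $M_P$ points of $P$, so each constant line contains at most $NM_P\le N^2/(2D)$ ordered differences $x-y$. Hence at least $N^2/2$ differences lie on nonlinear components of their fibers. We then conclude as before, using~\eqref{eq:poly-nonlinearpairs} and~\eqref{eq:poly-harmonic}, which gives $|\Delta_F(P)|\gg_D\min\{N^{2/3},p\}$.

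\emph{Case of infinitely many constant lines, all parallel to $v$.} Every fiber is then invariant under translation by $v$, so $\partial_vF\equiv0$. Since $D<p$, after an $\F$-linear change of coordinates $F$ is a polynomial $G$ in one linear form $\lambda$ defined over $\F$, with $G$ nonconstant of degree at most $D$. The values $F(x-y)=G(\lambda(x)-\lambda(y))$ then take at least $|\lambda(P)|/D$ distinct values. To see this, fix $y\in P$: as $x$ varies, $\lambda(x)$ runs over $\lambda(P)$, and each value of $G$ is attained at most $D$ times. Each fiber of $\lambda$ on $P$ lies on a line and so has at most $M_P$ points. Therefore $|\lambda(P)|\ge N/M_P$, and $|\Delta_F(P)|\ge N/(DM_P)$.

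\emph{Case $M_P>N/(2D)$.} Choose a line $\ell=a+\F v$ with $a\in P$ and $M_P$ points of $P$. If $F$ is nonconstant on $\ell-a$, that is, if $s\mapsto F(-sv)$ is nonconstant, the values $F(a+sv-a)$ over $a+sv\in P\cap\ell$ give at least $M_P/D>N/(2D^2)$ values. Otherwise $\ell-a$ is a constant line of $F$. Then use the point $a'$ of $P\cap\ell$ instead, together with points $z\in P$, as in the earlier proof. If some $z\in P$ has $s\mapsto F(z-a-sv)$ nonconstant, we again get at least $M_P/D$ values. If no such $z$ exists, every line $z-a+\kk v$ with $z\in P$ is a constant line, and these lines are all parallel to $v$. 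If these lines are infinitely many, or more generally if $F$ depends on one form, we are in the previous case. If they are finitely many, at most $D-1$ of them are parallel to $v$, so $P$ lies on at most $D-1$ lines parallel to $v$. Then $N\le(D-1)M_P$, and the constant lines give the bound through the one-form case applied with the form vanishing on $v$, namely $|\Delta_F(P)|\ge N/(DM_P)$.

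Combining the cases gives $|\Delta_F(P)|\gg_D\min\{N^{2/3},p,N/M_P\}$. For the consequence, $M_P\le N^{1/3}$ gives $N/M_P\ge N^{2/3}$.

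The main obstacle is the last subcase: the $D-1$ parallel constant lines may not come from $F$ being a function of one form. There I would argue directly. For two of these lines $L_1,L_2$ containing $\gg N/D$ points each, the differences $x-y$ with $x\in L_1$, $y\in L_2$ lie on a line parallel to $v$ through $L_1-L_2$, which need not be a constant line. I expect the restriction of $F$ there to be nonconstant, giving $\gg N/D^2$ values. Checking this is the step I expect to require the most care.
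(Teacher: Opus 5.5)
Your first two cases are sound. With at most $D$ constant lines and $M_P\le N/(2D)$, the argument of Corollary~\ref{cor:polynomialvalues} carries over unchanged. When $\partial_vF\equiv0$, your count $|\Delta_F(P)|\ge|\lambda(P)|/D\ge N/(DM_P)$ is correct. The gap is the final subcase of $M_P>N/(2D)$, where $P$ lies on at most $D-1$ lines parallel to $v$ but $F$ has only finitely many constant lines. There $F$ is \emph{not} a polynomial in one linear form, so your appeal to the one-form case does not apply. The direct argument you propose instead is false. In the example of Remark~\ref{rem:polyobstructions}, $F=UV(V-1)(V+1)$ with $P=\Fp\times\{0,1\}$, you land exactly in this subcase with $v=(1,0)$. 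Yet the differences between the two horizontal lines lie on the constant lines $V=\pm1$, and $F(P-P)=\{0\}$.

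The missing observation is that nothing needs to be proved there. In that subcase $N\le(D-1)M_P$, so $N/(DM_P)<1\le|\Delta_F(P)|$. In fact the whole case $M_P>N/(2D)$ is trivial, since then $\min\{N^{2/3},p,N/M_P\}\le N/M_P<2D$. With this, your case analysis does prove the corollary.

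The paper's proof is shorter because it counts line components fiber by fiber rather than counting constant lines of $F$ globally. Each fiber $F-\rho$ has at most $D$ line components, each containing at most $NM_P$ ordered differences. Together with \eqref{eq:poly-nonlinearpairs}, this gives $N^2\ll_D|\Delta_F(P)|\bigl(N^{4/3}+N^2/p+NM_P\bigr)$. This covers polynomials in one linear form automatically, since their fibers are unions of at most $D$ parallel lines. It needs neither the classification of constant lines nor a split on $M_P$.
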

\begin{proof}
Each fiber $F-\rho$ has at most $D$ line components, each contributing
at most $NM_P$ ordered pairs. Together with
\eqref{eq:poly-nonlinearpairs}, this gives
\[
 N^2\ll_D |\Delta_F(P)|
 \left(N^{4/3}+\frac{N^2}{p}+NM_P\right).
\]
Divide by $N^2$ and bound the resulting sum by three times its
largest term. This also covers characteristic zero, where the
term $p^{-1}$ is omitted.
\end{proof}

Using both orientations of a difference improves the condition on
$P$ for several natural classes of polynomials.

\begin{proposition}[A criterion using parallel lines]\label{prop:polytriple}
Let $2\le D<p$. Suppose there are no independent $v,w\in \F^2$
such that $F$ is constant on each of the three lines
\begin{equation}\label{eq:polytriple}
 \F v,\qquad w+\F v,\qquad -w+\F v.
\end{equation}
If $P$ is not contained in an affine line with direction $v\ne0$
such that $s\mapsto F(sv)$ is constant, then~\eqref{eq:polyvalues}
holds.
\end{proposition}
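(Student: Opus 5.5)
The plan is to follow the proof of Corollary~\ref{cor:polynomialvalues}. We split according to $M_P$ and change only the collinear case, where we use both orientations $z-x$ and $x-z$ of a difference. If $M_P\le N/(2D)$, we need some control on constant lines. Every line on which $F$ is constant has its defining linear form dividing $\partial_vF$ for its direction $v$, so it is a line component of the fiber of $F$ at its constant value. This gives at most $D$ constant lines per fiber, each carrying at most $NM_P$ ordered differences. So at most $D^2NM_P$ differences lie on constant lines across the (at most $D$-ish) values $\rho$ that carry constant lines. I would redo the count more carefully: there are at most $D$ distinct constant lines overall when \eqref{eq:poly-nonlinearform} holds. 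Without that hypothesis, I would instead use Corollary~\ref{cor:polycollinear} directly, which gives $|\Delta_F(P)|\gg_D\min\{N^{2/3},p,N/M_P\}$. This reduces everything to the case $M_P> cN^{1/3}$, or more simply to handling a rich line $\ell$ with $M_P\gg N/|\Delta_F(P)|$. So it suffices to show $|\Delta_F(P)|\gg_D M_P$ (or $\gg\min\{N^{2/3},p\}$) whenever a line carries $M_P$ points.

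For the collinear case, take $\ell=a+\F v$ with $M_P$ points of $P$. If $s\mapsto F(sv)$ is nonconstant, differences of points on $\ell$ alone give $\ge M_P/D$ values, as in the earlier proof. Otherwise $F$ is constant on $\F v$. By hypothesis $P\not\subset\ell$, so pick $z\in P\setminus\ell$ and set $w=z-a$, which is independent of $v$. We consider the two polynomials $s\mapsto F(w-sv)$ (from $z-(a+sv)$) and $s\mapsto F(-w+sv)$ (from $(a+sv)-z$). If either is nonconstant, we again get $\ge M_P/D$ values. If both are constant, $F$ is constant on $\F v$, $w+\F v$ and $-w+\F v$, contradicting the triple hypothesis~\eqref{eq:polytriple}.

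Combining, $|\Delta_F(P)|\ge M_P/D$ in the collinear case. Together with Corollary~\ref{cor:polycollinear}, $|\Delta_F(P)|\gg\min\{N^{2/3},p,N/M_P\}$ and $|\Delta_F(P)|\gg M_P$, which give $|\Delta_F(P)|\gg\min\{N^{2/3},p,\max(M_P,N/M_P)\}\ge\min\{N^{2/3},p,N^{1/2}\}$. This is too weak, and it is the main obstacle. The fix I would try is to run the dichotomy at threshold $M_P\gtrless N^{2/3}$, not $N^{1/2}$. If $M_P\ge N^{2/3}$, the collinear argument gives $\gg N^{2/3}$. If $M_P<N^{2/3}$, the difficulty is the term $NM_P$ in Corollary~\ref{cor:polycollinear}. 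Here I would bound the differences lying on constant lines more cleverly: a difference $x-y$ lies on a fixed line $L$ (through the origin direction $u$, offset $c$) exactly when $x$ and $y$ lie on parallel lines in direction $u$ separated by $c$. Since there are only $O_D(1)$ constant lines, those pairs number $\sum_{\text{lines } \lambda\parallel u}|P\cap\lambda||P\cap(\lambda-c)|$. I would then try to show this is $\le N^2/2$ unless $P$ concentrates on $O(1)$ lines parallel to $u$, where again the triple hypothesis and the one-line argument give many values. I expect this concentration step to be the delicate part.
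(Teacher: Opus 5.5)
Your collinear case is correct and matches the paper's argument. Take $\ell=a+\F v$ with $a\in P$. Either $s\mapsto F(sv)$ is nonconstant, or, with $w=z-a$, one of $s\mapsto F(w-sv)$, $s\mapsto F(-w+sv)$ is nonconstant by \eqref{eq:polytriple}. Root counting then gives $|\Delta_F(P)|\ge M_P/D$. The gap is in the other case. You treat \eqref{eq:poly-nonlinearform} as an extra assumption that might fail and fall back on Corollary~\ref{cor:polycollinear}. As you note, this only yields $\min\{N^{2/3},p,N^{1/2}\}$. Your repair (threshold $N^{2/3}$ plus a ``concentration step'') is never carried out, so as written the proposal does not prove the statement.

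The missing observation is that the triple hypothesis already implies \eqref{eq:poly-nonlinearform}. If $F=G(aU+bV)$ with $(a,b)\ne0$, pick $v\ne0$ with $av_1+bv_2=0$. Then $F$ is constant on every line with direction $v$, and any $w$ independent of $v$ gives three constant lines as in \eqref{eq:polytriple}. Hence the second part of Lemma~\ref{lem:polyfibers} applies, and there are at most $D$ constant lines \emph{in total}, not per fiber. This removes the per-fiber term $NM_P$ of Corollary~\ref{cor:polycollinear}:
\begin{itemize}
\item If $M_P\le N/(2D)$, at most $DNM_P\le N^2/2$ ordered differences lie on constant lines. The rest lie on nonlinear fiber components, and summing \eqref{eq:poly-nonlinearpairs} over values gives \eqref{eq:polyvalues}, exactly as in the first case of Corollary~\ref{cor:polynomialvalues}.
\item If $M_P>N/(2D)$, your collinear argument gives $|\Delta_F(P)|\ge M_P/D>N/(2D^2)$.
\end{itemize}
Your repair paragraph already assumes ``only $O_D(1)$ constant lines.'' Once that is granted, the crude bound $O_D(NM_P)$ on differences along them suffices and no concentration analysis is needed. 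The real work is justifying that count, which is precisely the implication above.
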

\begin{proof}
The hypothesis implies~\eqref{eq:poly-nonlinearform}: otherwise
take $v$ in the kernel of the linear form and $w$ independent of
$v$. Thus, Lemma~\ref{lem:polyfibers} applies. If $M_P\le N/(2D)$,
the first case of the proof of Corollary~\ref{cor:polynomialvalues}
gives the conclusion. Otherwise, take a line $\ell=a+\F v$
containing $M_P$ points of $P$, with $a\in P$. If $s\mapsto F(sv)$
is nonconstant, the differences
$x-a$, $x\in P\cap\ell$, give at least $M_P/D$ values.

If that restriction is constant, choose $z\in P\setminus\ell$,
which exists by hypothesis, and put $w=z-a$. The vectors $v,w$
are independent. At least one of
\[
 s\longmapsto F(w-sv),\qquad s\longmapsto F(-w+sv)
\]
is nonconstant, by~\eqref{eq:polytriple}. At the parameters with
$a+sv\in P\cap\ell$, these are values at the ordered differences
$z-x$ and $x-z$. Root counting again yields
$|\Delta_F(P)|\ge M_P/D>N/(2D^2)$.
\end{proof}

\begin{corollary}[Homogeneous polynomial values]\label{cor:homogeneousvalues}
Let $H\in \F[U,V]$ be homogeneous of degree $2\le D<p$ and not a
scalar multiple of a power of one linear form over $\kk$.
If $P$ is not contained in an affine line with direction $v\ne0$
satisfying $H(v)=0$, then
\[
 |H(P-P)|\gg_D\min\{N^{2/3},p\}.
\]
For $N\ge4D^2$, the same estimate holds for
$|H(P-P)\setminus\{0\}|$.
\end{corollary}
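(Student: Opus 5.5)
The plan is to deduce the first assertion from Proposition~\ref{prop:polytriple} and to handle the nonzero values separately. For a homogeneous $H$ of degree $D$, the restriction $s\mapsto H(sv)=s^DH(v)$ is constant exactly when $H(v)=0$. So the hypothesis on $P$ in the corollary is precisely the hypothesis on $P$ in the proposition. It therefore remains to check that there are no independent $v,w$ with $H$ constant on the three lines $\F v$, $w+\F v$ and $-w+\F v$.

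Suppose such $v,w$ exist. Constancy on $\F v$ forces $H(v)=0$. Change coordinates over $\F$ so that $v=(0,1)$ and $w=(1,0)$, and write
\[
 H(U,V)=\sum_{j=0}^{D} h_jU^{D-j}V^j .
\]
The condition $H(v)=0$ says $h_D=0$. Constancy on $w+\F v$ means that $s\mapsto H(1,s)=\sum_j h_js^j$ is constant, so $h_j=0$ for every $j\ge1$. Hence $H=h_0U^D$, a scalar multiple of a power of one linear form, which contradicts the hypothesis. Only the line $w+\F v$ is actually needed here; the line $-w+\F v$ is automatic by homogeneity. Proposition~\ref{prop:polytriple} then gives $|H(P-P)|\gg_D\min\{N^{2/3},p\}$.

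For the nonzero values, the set $H(P-P)\setminus\{0\}$ has one fewer element than $H(P-P)$ whenever $0\in H(P-P)$. This always happens, since $H(0)=0$. The bound from the proposition has the form $|H(P-P)|\ge c_D\min\{N^{2/3},p\}$. Removing one element costs at most a factor of $2$ provided this quantity is at least $2$.

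The main obstacle is making this step quantitative with the stated threshold $N\ge4D^2$, because $c_D$ is not explicit. To get around this, I would revisit the proof rather than the conclusion. In the collinear case of Proposition~\ref{prop:polytriple}, root counting gives at least $M_P/D>N/(2D^2)\ge2$ values from a single nonconstant polynomial in $s$ of degree at most $D$. Such a polynomial takes the value $0$ at most $D$ times, so the number of nonzero values is at least $(M_P-D)/D$. This remains $\gg N/D^2$ when $N\ge4D^2$, because $M_P>N/(2D)\ge2D$.

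In the non-collinear case, the differences lying on the line components of $H=0$ number at most $DNM_P\le N^2/2$. I would instead run the counting with only the nonzero fibers $\rho\ne0$. A difference $x-y$ with $H(x-y)=0$ lies on one of the at most $D$ lines through the origin, since $H$ is homogeneous. These account for at most $DNM_P$ pairs, and this is already excluded in that case. So the same harmonic-mean estimate yields the bound for the nonzero values directly.
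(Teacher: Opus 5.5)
Your proposal is correct and follows the paper's proof essentially step by step: the first assertion comes from Proposition~\ref{prop:polytriple}, and the nonzero-value assertion comes from rerunning the two cases $M_P\le N/(2D)$ and $M_P>N/(2D)$, with the same count $(M_P-D)/D\ge M_P/(2D)>N/(4D^2)$. There are two small differences. First, you rule out the three constant lines by a coordinate computation, whereas the paper factors $H$ into linear forms over $\kk$ to see that its constant lines are exactly its zero lines through the origin. Second, in the case $M_P\le N/(2D)$ you should state explicitly that the surviving pairs lie on nonlinear components of nonzero fibers; this holds either because the zero lines through the origin are among the excluded constant lines, or because your computation, carried out over $\kk$, shows that no line avoiding the origin is a constant line of $H$.
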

\begin{proof}
Factor $H=c\prod_{j=1}^h L_j^{e_j}$ over $\kk$, where $h\ge2$
and the linear forms $L_j$ are pairwise nonproportional. If
$H(a+sv)$ is a nonzero constant, every factor $L_j(a+sv)$ must
be constant. Thus, $L_j(v)=0$ for all $j$, which forces $v=0$.
If $H(a+sv)$ is identically zero, one factor is identically zero,
so the line equals $\ker L_j$ for some $j$. Hence, the constant
lines of $H$ are precisely its zero lines through the origin.
There is at most one in any given direction, and
$H(sv)=s^D H(v)$ is constant exactly when $H(v)=0$.
Proposition~\ref{prop:polytriple} proves the first assertion.

For the second, suppose $N\ge4D^2$. If $M_P\le N/(2D)$, the
at most $D$ zero lines account for at most $DNM_P\le N^2/2$
ordered pairs. Every nonzero fiber has only nonlinear components,
so~\eqref{eq:poly-nonlinearpairs} and~\eqref{eq:poly-harmonic}
give the result using only nonzero values.

If $M_P>N/(2D)$, choose $\ell=a+\F v$ with $a\in P$ and
$|P\cap\ell|=M_P$. When $H(v)\ne0$, use the nonconstant
polynomial $s\mapsto H(sv)$. When $H(v)=0$, take
$z\in P\setminus\ell$; the line $z-a+\kk v$ is not a zero line
through the origin and hence is not a constant line of $H$.
Thus, $s\mapsto H(z-a-sv)$ is nonconstant. In either case we
evaluate a polynomial of degree at most $D$ at $M_P$ parameters.
At most $D$ parameters give zero, and every other value has at
most $D$ preimages. Since $M_P>N/(2D)\ge2D$,
\[
 |H(P-P)\setminus\{0\}|\ge\frac{M_P-D}{D}
 \ge\frac{M_P}{2D}>\frac{N}{4D^2}.
\]
\end{proof}

In particular, this corollary applies to $U^d+V^d$ for every
$2\le d<p$, and to $U^aV^b$ for $a,b\ge1$ and $a+b<p$.
If $H(v)\ne0$ for every $v\in \F^2\setminus\{0\}$, the assertion
about all values has no restriction on $P$.

\begin{corollary}[Polynomials of degree two or three]\label{cor:lowdegreevalues}
Suppose $2\le D\le3$, $D<p$, and~\eqref{eq:poly-nonlinearform}
holds. Then~\eqref{eq:polyvalues} holds whenever $P$ is not
contained in a line with direction $v\ne0$ such that $F(sv)$ is
constant. In particular, it holds for every noncollinear $P$,
with an absolute implied constant.
\end{corollary}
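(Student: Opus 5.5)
The plan is to deduce the corollary from Proposition~\ref{prop:polytriple}. Its second hypothesis, that $P$ is not contained in a line with direction $v$ on which $s\mapsto F(sv)$ is constant, is exactly the hypothesis of the corollary. So it suffices to show the following: if $2\le D\le3$ and \eqref{eq:poly-nonlinearform} holds, there are no independent $v,w\in\F^2$ with $F$ constant on all three parallel lines $\F v$, $w+\F v$, $-w+\F v$. The implied constant will then be absolute, because $D$ takes only two values.

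For the triple condition, I would argue by contradiction. Suppose $v,w$ are independent and $F$ is constant on the three lines. After an invertible $\F$-linear change of coordinates, take $v=(0,1)$ and $w=(1,0)$. Condition \eqref{eq:poly-nonlinearform} is preserved under such changes. Then $F(U,V)$, viewed as a polynomial in $V$ with coefficients in $\F[U]$, has all positive-degree coefficients $c_j(U)$, $j\ge1$, vanishing at $U=0,1,-1$. These three points are distinct because $p>D\ge2$, so $p\ge3$.

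We have $\deg c_j\le D-j\le 2$ for $j\ge1$. A nonzero polynomial of degree at most $2$ has at most two roots, so $c_j$ must vanish identically for every $j\ge1$. Hence $F=c_0(U)$ depends only on the linear form $U$, contradicting \eqref{eq:poly-nonlinearform}.

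The same root count works for $D=2$ with room to spare. This is exactly where $D\le3$ enters: for $D=4$, the coefficient $c_1$ could be a cubic vanishing at $0,\pm1$.

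For the final sentence, a noncollinear $P$ is not contained in any line, so the hypothesis on $P$ holds automatically. The only delicate point is checking that the coordinate change preserves both the constancy of $F$ on the lines and condition \eqref{eq:poly-nonlinearform}; both are clear because the change is $\F$-linear.
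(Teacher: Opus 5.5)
Your proposal is correct and follows essentially the same route as the paper: both reduce to Proposition~\ref{prop:polytriple} and rule out three parallel constant lines by a root count in degree at most $2$, using that the three lines are distinct because the characteristic is not $2$. The paper phrases the count as $\partial_vF$, of degree at most $D-1\le2$, having three distinct line factors, while your adapted coordinates count roots of the coefficients $c_j(U)$ directly and so never need to know beforehand that $\partial_vF\ne0$.
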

\begin{proof}
If the three lines in~\eqref{eq:polytriple} were constant lines,
the nonzero polynomial $\partial_vF$ would have three distinct
line factors. This is impossible because its degree is at most
$D-1\le2$. The lines are distinct since the characteristic is
different from $2$. Apply Proposition~\ref{prop:polytriple}.
\end{proof}

\begin{remark}[Fibers without line components]\label{rem:polynolines}
If $F$ has no constant geometric affine line, then
\eqref{eq:polyvalues} holds for every nonempty $P$.
Indeed, all fiber components are nonlinear, so summing
\eqref{eq:poly-nonlinearpairs} gives
$N^2\ll_D|\Delta_F(P)|(N^{4/3}+N^2/p)$.
For example, $F(U,V)=U^2+V^3$ has this property when $p>3$.
On a line $a+sv$, its cubic coefficient is $v_2^3\ne0$ if
$v_2\ne0$, and otherwise its quadratic coefficient is
$v_1^2\ne0$.
\end{remark}

\begin{remark}[Necessary geometric hypotheses]\label{rem:polyobstructions}
The restrictions above cannot be omitted. For $F(U,V)=U$ and
$P=A\times A$, where $A=\{0,\ldots,h-1\}\subset\Fp$ and
$2h<p$, one has $N=h^2$ but $|F(P-P)|=2h-1<2N^{1/2}$.
Even~\eqref{eq:poly-nonlinearform} and noncollinearity do not
suffice in degree four. For $p>4$, take
\[
 F(U,V)=UV(V-1)(V+1),\qquad P=\Fp\times\{0,1\}.
\]
Then $F(P-P)=\{0\}$, since the second coordinate of every
difference belongs to $\{0,1,-1\}$. The derivatives
$F_U=V^3-V$ and $F_V=U(3V^2-1)$ are linearly independent,
so~\eqref{eq:poly-nonlinearform} holds. Moreover, each nonzero
fiber is geometrically irreducible: $U(V^3-V)-\rho$ is primitive
and linear in $U$ over $\kk(V)$. It is smooth because its
$U$-derivative is nonzero on that fiber. Thus, smoothness and
irreducibility of the nonzero fibers do not remove this obstruction.
More generally, if the three constant lines in~\eqref{eq:polytriple}
exist over $\Fp$, the set
$P=\Fp v\cup(w+\Fp v)$ has $2p$ points and $|F(P-P)|\le3$.
\end{remark}

For the quadratic form $H(U,V)=U^2+V^2$, Murphy, Petridis, Pham,
Rudnev and Stevens~\cite[Theorem~3]{MPPRS} proved the pinned estimate
\[
 \max_{y\in P}|\{H(x-y):x\in P\}|\gg N^{2/3}
\]
over an arbitrary field of characteristic $p$, provided that
$N\le p^{4/3}$ and $P$ is not contained in an isotropic line,
that is, a line whose nonzero direction vector $v$ satisfies $H(v)=0$.
A pinned estimate requires many values from a single point $y$;
our estimates concern the full set $F(P-P)$.

Corollaries~\ref{cor:polynomialvalues} and~\ref{cor:homogeneousvalues}
apply to general polynomials under the stated geometric hypotheses.
They give the exponent $2/3$ for $N\le p^{3/2}$. For the quadratic
form, Corollary~\ref{cor:homogeneousvalues} extends the characteristic
range in \cite[Theorem~3]{MPPRS} for the unpinned problem over
arbitrary fields.

Over $\Fp$, Murphy et al.~\cite[Theorem~1]{MPPRS} also proved
that some pin determines $\gg p$ quadratic distances when
$N\ge p^{5/4}$. Together with \cite[Theorem~3]{MPPRS}, this already
covers the prime-field range considered here for the usual distance.
Thus, our contribution to this comparison is the extension to
polynomial functions and the wider characteristic range over
arbitrary fields, rather than a new prime-field range for quadratic
distances.

\begin{remark}
Thang Pham pointed out to the author that combining the methods
of~\cite{Iosevichetal,Io2,MPPRS} with Lewko's point--line incidence
bound gives the following estimate. If $P\subset\Fp^2$,
$p\equiv3\pmod4$, and $|P|\le p^{4/3}$, then the number of
isosceles triangles in $P$ with nonzero bases is
\[
 \ll \frac{|P|^3}{p}+|P|^{11/5}.
\]
Consequently, the number of distinct distances determined by $P$
is
\[
 \gg \min\{p,|P|^{4/5}\}.
\]
In particular, for $|P|\le p^{5/4}$, this gives the lower bound
$\gg|P|^{4/5}$ for the usual quadratic distance, improving the
exponent $2/3$ in Corollary~\ref{cor:polynomialvalues} in this case.
The threshold $|P|\ge p^{5/4}$ for determining a positive proportion
of all distances was already established by Murphy
et al.~\cite[Theorem~1]{MPPRS}.
\end{remark}

\subsection{Polynomial expansion}

Bukh and Tsimerman \cite[Theorem~1]{BT} proved that, for
$F\in\Fp[X]$ of degree $2\le D<p$ and $A\subset\Fp$ with
$N=|A|\le p^{1/2}$,
\[
 |A+A|+|F(A)+F(A)|\gg_D N^{1+1/(16\cdot6^D)}.
\]
The next corollary gives the exponent $5/4$, independent of $D$,
for $N\le p^{2/3}$. It follows by applying
Theorem~\ref{thm:pure} to a Cartesian product of two sumsets.

\begin{corollary}[Polynomial expansion]\label{cor:expansion}
Let $F\in\Fp[X]$ have degree $D$, where $2\le D<p$, and let
$A\subset\Fp$ have cardinality $N\ge1$. Then
\begin{equation}\label{eq:polynomialexpansion}
 |A+A|\,|F(A)+F(A)|
 \gg_D \min\{N^{5/2},pN\}.
\end{equation}
In particular, if $N\le p^{2/3}$, then
\[
 \max\{|A+A|,|F(A)+F(A)|\}\gg_D N^{5/4}.
\]
\end{corollary}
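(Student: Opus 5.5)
This follows the Elekes-type argument, with the point--line incidence bound replaced by Theorem~\ref{thm:pure}. Put $S=A+A$ and $T=F(A)+F(A)$, and take the point set $P=S\times T\subset\Fp^2$, so $m=|S||T|$. For $(a,b)\in A^2$ consider the translated curve
\[
 \Lambda_{a,b}=\{(X,Y):Y=F(X-a)+F(b)\}.
\]
These are members of the two-parameter family $f(x_1,x_2;y_1,y_2)=x_2-F(x_1-y_1)-F(y2)$. I would rather use $f=x_2-F(x_1-y_1)-y_2$, with parameter $(a,c)$, $c=F(b)$. This has total degree $D$, and each fiber is irreducible, being linear in $x_2$.

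Distinct pairs $(a,c)$ give distinct curves. Indeed, if $F(X-a)+c=F(X-a')+c'$ identically, then $F(X)-F(X+a-a')$ is constant. Its $X^{D-1}$ coefficient is $D(a'-a)$ times the leading coefficient of $F$. Since $D<p$, this forces $a=a'$, and then $c=c'$. So the curves indexed by $A\times F(A)$ are $n=N|F(A)|\ge N^2/D$ distinct members of $\family_{\Fp}(f)$.

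Each such curve contains at least $N$ points of $P$: for $a'\in A$ take $X=a+a'$ and $Y=F(a')+c\in T$. Hence $I(P,\CC)\ge Nn$. Theorem~\ref{thm:pure} gives
\[
 Nn\le C_D\bigl((mn)^{2/3}+m+n+mn/p\bigr).
\]

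Now split according to which term on the right dominates.
\begin{itemize}
\item The term $n$ is absorbed once $N>2C_D$. Small $N$ is trivial, since $|S||T|\ge1$.
\item If $(mn)^{2/3}$ dominates, then $N^3n\ll m^2$. Using $n\gg N^2$, this gives $m\gg N^{5/2}$.
\item If $m$ dominates, then $m\gg Nn\gg N^3$, which is even better.
\item If $mn/p$ dominates, then $m\gg pN$.
\end{itemize}
Together these give $|S||T|\gg_D\min\{N^{5/2},pN\}$. When $N\le p^{2/3}$ we have $N^{5/2}\le pN$, so the product is $\gg N^{5/2}$, and the maximum of $|A+A|$ and $|F(A)+F(A)|$ is $\gg N^{5/4}$.

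The main point requiring care is the distinctness of the curves, so that Theorem~\ref{thm:pure} applies to distinct components; this needs $D<p$. Note that $n$ counts distinct pairs $(a,c)\in A\times F(A)$, so the estimate $|F(A)|\ge N/D$ is needed, and it also depends only on $D$. Everything else is routine bookkeeping of the constants.
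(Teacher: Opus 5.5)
Your proposal is correct and follows the paper's proof essentially verbatim: the same point set $(A+A)\times(F(A)+F(A))$, the same curves $x_2=F(x_1-a)+c$ indexed by $A\times F(A)$, the same $x_1^{D-1}$-coefficient distinctness argument using $D<p$, and the same case split after absorbing the $C_Dn$ term. Your switch from indexing by $A^2$ to indexing by $A\times F(A)$ is the right choice, since indexing by $A^2$ would repeat curves whenever $F$ is not injective on $A$.
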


\begin{proof}
Consider the point set
\[
 P=(A+A)\times(F(A)+F(A))
\]
and the curves
\[
 \Lambda_{a,b}:\quad x_2=F(x_1-a)+b,
 \qquad (a,b)\in A\times F(A).
\]
These are geometrically irreducible graphs in the two-parameter family
defined by $x_2-F(x_1-a)-b$, which has total degree $D$.
They are also distinct. Indeed, write
$F(X)=c_DX^D+c_{D-1}X^{D-1}+\cdots$, with $c_D\ne0$.
The coefficient of $x_1^{D-1}$ in $F(x_1-a)+b$ is
$c_{D-1}-Dc_Da$. Since $D<p$, equality of two graphs forces their
parameters $a$ to coincide, and then their parameters $b$ coincide.

Put
\[
 m=|P|=|A+A|\,|F(A)+F(A)|,
 \qquad n=N|F(A)|.
\]
A polynomial of degree $D$ takes any prescribed value at at most $D$
points, so
\[
 N^2/D\le n\le N^2,\qquad n\ge N.
\]
For each $(a,b)\in A\times F(A)$, the $N$ distinct points
\[
 (a+u,F(u)+b),\qquad u\in A,
\]
belong to both $P$ and $\Lambda_{a,b}$. Hence, Theorem~\ref{thm:pure}
gives, for a constant $C_D\ge1$,
\[
 Nn\le C_D\bigl((mn)^{2/3}+m+n+mn/p\bigr).
\]
If $N\ge2C_D$, the term $C_Dn$ can be absorbed into the left-hand
side. At least one of the other three terms inside the parentheses is
then at least $Nn/(6C_D)$. The corresponding alternatives are
\[
 m\ge (6C_D)^{-3/2}N^{3/2}\sqrt n,
 \qquad m\ge (6C_D)^{-1}Nn,
 \qquad m\ge (6C_D)^{-1}pN.
\]
Since $n\ge N$, we have $Nn\ge N^{3/2}\sqrt n$. Using also
$n\ge N^2/D$, we obtain
\[
 m\gg_D\min\{N^{3/2}\sqrt n,pN\}
 \gg_D\min\{N^{5/2},pN\}.
\]
If $N<2C_D$, the same conclusion follows, after decreasing the
constant, from $m\ge N$. This proves
\eqref{eq:polynomialexpansion}. Finally, $N\le p^{2/3}$ implies
$N^{5/2}\le pN$, and the larger of two nonnegative quantities is at
least the square root of their product.
\end{proof}

\begin{remark}
When $D=2$, the bijection $(x_1,x_2)\mapsto(x_1,x_2-F(x_1))$ sends the curves
$\Lambda_{a,b}$ to lines. Thus, the quadratic case of
Corollary~\ref{cor:expansion} already follows from Lewko's point--line
theorem~\cite{Lewko}.
\end{remark}

\subsection{Sharpness and the number of parameters}

\paragraph{Sharpness for a nonlinear family.}
The map $(x_1,x_2)\mapsto(x_1,x_2+x_1^2)$ is a polynomial automorphism and sends
the lines $x_2=ax_1+b$ to the parabolas $x_2=x_1^2+ax_1+b$.
It preserves incidences, so the point--line examples showing sharpness
of the Szemer\'edi--Trotter theorem~\cite{ST} transfer to this nonlinear family.
For example, take the points $[R]\times[2R^2]$ and the lines with
$a\in[R]$, $b\in[R^2]$, where $[s]=\{1,\ldots,s\}$.
There are $2R^3$ points, $R^3$ lines, and $R^4$ incidences.
This construction works over the reals and, by reduction, over
$\Fp$ with $p>4R^3$. Applying the automorphism proves that the
exponent $2/3$ in Theorem~\ref{thm:pure} cannot be decreased even
for these parabolas. In odd characteristic they are exactly the
translates of $x_2=x_1^2$, since
$(x_1-u)^2+v=x_1^2-2ux_1+u^2+v$.

The same family shows that the other terms are also necessary.
All $p^2$ points
of $\Fp^2$ and all $p^2$ parabolas $x_2=x_1^2+ax_1+b$ have $p^3$
incidences, which requires the characteristic term. One parabola
containing many points requires the term $m$, while many distinct
parabolas through one point require the term $n$.
These examples establish sharpness for the general incidence theorem.

\begin{remark}\label{rem:twoparameters}
A two-parameter family need not have two degrees of freedom. The latter
condition requires, in particular, a uniform bound on the number of
curves through any two distinct points. For example, over an infinite
field the family
\[
 x_2=(a+b x_1)x_1(x_1-1),\qquad (a,b)\in \F^2,
\]
consists of distinct geometrically irreducible curves, all containing
$(0,0)$ and $(1,0)$. Theorem~\ref{thm:pure} includes such families.
\end{remark}

\paragraph{The number of parameters.}
Allowing three parameters changes the problem. Indeed, the same
bound fails in that setting, even for a $K_{3,3}$-free graph defined
by one polynomial of total degree $3$. To see this, let $R\ge4$
be an integer and choose a prime $p>R^4$. In $\Fp$, using the
integer representatives, take
\[
 P=[R]\times[3R^3],\qquad
 Q=[R]\times[R^2]\times[R^3].
\]
Define the bipartite graph $G=(P,Q,E)$ by joining $(x_1,x_2)$ to
$(a,b,c)$ when $x_2=ax_1^2+bx_1+c$.
Every parameter gives a distinct irreducible parabola and has exactly
$R$ neighbors in $P$: for $1\le x_1\le R$ the integer value of
$ax_1^2+bx_1+c$ lies between $1$ and $3R^3<p$.
Thus,
\[
 m=3R^4,\qquad n=R^6,\qquad |E|=R^7.
\]
Two distinct parabolas have at most two common points, so this graph
is $K_{3,3}$-free. On the other hand,
\[
 (mn)^{2/3}+m+n+\frac{mn}{p}=O(R^{20/3}).
\]
Letting $R$ tend to infinity rules out an estimate with these exponents
and a uniform constant for three-parameter families. In particular,
families such as all circles, whose centers and radii vary
independently, require a separate incidence theorem.

\appendix
\section{The degree restriction in the contact bound}\label{sec:cutoff}

The strict inequality $d<p/t$ in Lemma~\ref{lem:norm} cannot in general
be replaced by $d\le\lceil p/t\rceil$.

\begin{proposition}\label{prop:cutoff}
For every integer $t\ge2$ and every prime $p>1+4t^2$, there are
polynomials $\Phi$ and $F$ over $\overline{\Fp}$ with
$\deg\Phi=t$ and $\deg F=\lceil p/t\rceil$ such that all the hypotheses
of Lemma~\ref{lem:norm}, except $d<p/t$, hold for arbitrarily many
distinct marked points, and the contact order at each of them is $p$.
Consequently, the conclusion of that lemma fails for these configurations.
\end{proposition}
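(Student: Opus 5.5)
The plan is to build an explicit one-parameter family of monomial-type curves on which a fixed polynomial $F$ restricts to a $p$-th power. This exploits the Frobenius identity $u^p-1=(u-1)^p$ in characteristic $p$. The model computation is as follows. Write $k=\lceil p/t\rceil$ and $tk=p+s$ with $0\le s<t$. Let $F=X^k-Y^s$, which has degree $k$, since $s<t\le k$. On the curve $X=\lambda Y^t$, parametrized by $Y=u$, one gets
\[
F=\lambda^k u^{p+s}-u^s=u^s\bigl(\lambda^k u^p-1\bigr)=u^s(\mu u-1)^p,\qquad \mu^p=\lambda^k .
\]
So at the point $u=\mu^{-1}$, where $Y\neq0$, the contact order is exactly $p$.

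First I fix the degree bookkeeping. The family $\Phi=X-BY^t$ has total degree $t+1$, so I would use $\Phi(X,Y;B)=X-BY^{t-1}$ of total degree $t$, or an equivalent $B$-scaling family of exact degree $t$. I then choose $k=\lceil p/t\rceil$ and the exponent $s$ in $F=X^{k}-Y^{s}$ so that $\deg F=k$, $s<k$, and the exponent of $u$ on the curve equals $p+s$. If the exponent cannot be matched exactly with $t-1$, I would pass to a family such as $X^{a}=BY^{b}$ with $a+b=t$. I would also allow a multiplicative twist, e.g.\ $F=X^{k}-cY^{s}$ or $F$ with a $u^{s}$ cofactor that is a unit away from $Y=0$. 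The hypothesis $p>1+4t^2$ is what leaves room to match these degrees, and getting this arithmetic right is the step I expect to be the main obstacle.

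Next I verify the hypotheses of Lemma~\ref{lem:norm} for the chosen family. The $B$-degree is $r=1$, and the discriminant of a linear polynomial is a nonzero constant under the fixed convention, which gives (a). For (b), $\Phi(X,Y;c)$ contains the monomial $X$ (or $X^{a}$), so it is never identically zero. At a marked point $q_i=(X_i,Y_i)$ with $Y_i\neq0$, we have $\partial_X\Phi=1$ (or $aX^{a-1}\neq0$ after choosing $X_i\neq0$), so the gradient is nonzero. Also $\Phi(q_i;B)=X_i-BY_i^{t-1}\not\equiv0$. The curves $\Lambda_i$ are irreducible graphs. $F$ is squarefree: $X^k-Y^s$ is irreducible when $\gcd(k,s)=1$, and otherwise I would perturb $F$ to a squarefree one by a choice of $s$. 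Finally, $F$ does not vanish on any $\Lambda_i$, because its restriction is the nonzero polynomial $u^s(\mu u-1)^p$.

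For arbitrarily many marked points, I would take infinitely many distinct parameters $c_i=\lambda_i\in\overline{\Fp}^{\times}$, chosen so that the points $q_i=(\lambda_i\mu_i^{-(t-1)},\mu_i^{-1})$ are distinct. This holds for all but finitely many choices, since distinct $\lambda$ give distinct $\mu=\lambda^{k/p}$. At each $q_i$ the contact order is $\mu_i=p$. Since $c_*=1+4t^2<p$ when $r=1$, every term $(p-c_*)_+$ is positive, and the left side of \eqref{eq:normbound} grows linearly in the number of points. The right side $d(2t+d)$ is fixed, so the conclusion of Lemma~\ref{lem:norm} fails.
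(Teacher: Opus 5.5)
There is a genuine gap: the proposal never produces an admissible $\Phi$. Your mechanism is the right one: restrict $F$ to binomial curves and use $(\nu u-1)^p=\nu^pu^p-1$. But the identity $F|_\Lambda=u^s(\mu u-1)^p$ is computed on members of $X-BY^t$, which has total degree $t+1$. For that family the example only shows failure at $\deg F=\lceil p/(\deg\Phi-1)\rceil$, not at the threshold $\lceil p/\deg\Phi\rceil$ claimed in the proposition.

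Both repairs you suggest fail for a structural reason, not an arithmetic one. Take a member $X^a=\lambda Y^b$ with $a,b\ge1$, $a+b=t$, $\gcd(a,b)=1$; this includes $X=\lambda Y^{t-1}$. Parametrize it by $X=\lambda'v^b$, $Y=v^a$, so that $v-v_0$ is a local uniformizer for $v_0\ne0$. A monomial $X^iY^j$ with $i+j\le k$ becomes a multiple of $v^{bi+aj}$, where $0\le bi+aj\le(t-1)k=p-\bigl(p-(t-1)s\bigr)/t<p$, because $(t-1)s\le(t-1)^2<p$. Thus $F|_\Lambda$ is a nonzero polynomial of degree less than $p$ in $v$, and no contact order can reach $p$. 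For $F=X^k-Y^s$ on $X=\lambda Y^{t-1}$ the roots are even simple.

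Your verification paragraph also mixes the two families: $q_i$ and $\Phi(q_i;B)$ are written for $X-BY^{t-1}$, while the order computation is for $X=\lambda Y^t$. Finally, $p>1+4t^2$ plays no role in matching degrees. It is needed for $p>c_*$, as you note at the end, and it also gives $s<k$.

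The missing step is small: keep your curves but attach the parameter to the low-degree monomial, $\Phi=BX-Y^t$. Its members with $c\ne0$ are exactly $X=\mu Y^t$ with $\mu=1/c$. Now:
$\deg\Phi=t$ and $r=1$; $\Disc_B\Phi$ is a nonzero constant; $\Phi(X,Y;c)\not\equiv0$; $\nabla_{X,Y}\Phi(q;c)=(c,-tY^{t-1})\ne0$ for $c\ne0$; and $\Phi(q;B)\not\equiv0$ when $Y\ne0$.

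Take $F=X^k-Y^s$. It is irreducible because $\gcd(k,s)$ divides $tk-s=p$ and $s<p$. Your computation then gives contact order exactly $p$ at $u=\nu^{-1}$. Distinct members give distinct marked points, since a point with $Y\ne0$ lies on only one member. Your claim that distinct $\lambda$ give distinct $\mu$ is false, since $\lambda^k$ may coincide, but it is not needed.

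The paper uses a different but dual pair: $\Phi=X^\delta Y^{t-\delta}-B$ and $F=X^d-Y^{d-1}$, where $\delta=td-p$ is your $s$. It restricts $H=X^\delta Y^{t-\delta}$ to $V(F)$, parametrized by $(w^{d-1},w^d)$, where $H=w^{td-\delta}=w^p$. Its members are hyperbola-type, so a monomial parametrization of them has exponents of opposite signs. This, like the placement of $B$ in $BX-Y^t$, lets monomials of degree at most $d$ span an exponent range of length $p$ while keeping $\deg\Phi=t$.
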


\begin{proof}
Work over $\Omega=\overline{\Fp}$ and set
\[
 \begin{gathered}
 d=\lceil p/t\rceil,\qquad \delta=td-p\in\{1,\ldots,t-1\},\\
 F=X^d-Y^{d-1},\qquad H=X^\delta Y^{t-\delta},\qquad \Phi=H-B.
 \end{gathered}
\]
The polynomial $F$ is irreducible since $d$ and $d-1$ are coprime.
The curve $V(F)\cap(\Omega^\times)^2$ has the parametrization
\[
 (X,Y)=(s^{d-1},s^d),\qquad s=Y/X,\qquad s\in\Omega^\times.
\]
The restriction of $H$ there is $s^{td-\delta}=s^p$.
For each $s_0\ne0$, let $q_{s_0}=(s_0^{d-1},s_0^d)$ and consider the
fiber $H=s_0^p$. This fiber is smooth at $q_{s_0}$ because both
exponents of $H$ lie strictly between $0$ and $p$. It is irreducible:
$\gcd(\delta,t-\delta)=\gcd(p,t)=1$, so its defining binomial is
irreducible in $\Omega[X^{\pm1},Y^{\pm1}]$, and it has no
coordinate-axis component. It is not a
component of $F$.

At $q_{s_0}$ the intersection multiplicity is
\[
 I_{q_{s_0}}(F,H-s_0^p)
 =\ord_{s=s_0}(s^p-s_0^p)=p.
\]
Since this fiber is smooth, this is precisely its contact order
with $F$. The discriminant in $B$ and the leading coefficient of $\Phi$
are nonzero constants. Moreover, $\Phi(X,Y;c)\not\equiv0$ for every
$c\in\Omega$, and
$\Phi(q_{s_0};B)=s_0^p-B$ is not identically zero. We may choose
arbitrarily many distinct $s_0$ in $\Omega$.
For primes $p>1+4t^2$, each summand $\epspart{p-(1+4t^2)}$ is positive,
so no bound depending only on $t,d$ can hold for their sum. The only
failed hypothesis of Lemma~\ref{lem:norm} is $d<p/t$.
Here $H$ restricts to the nonconstant $p$th power $s^p$ on $V(F)$,
so its derivative vanishes. The function-field degree estimate in
the proof of Lemma~\ref{lem:norm} excludes this possibility when
$d<p/t$.
\end{proof}

This example concerns the degree restriction in Lemma~\ref{lem:norm},
not the incidence bound in Theorem~\ref{thm:pure}. Indeed,
$H(q_s)=s^p$, so $q_s$ lies on the chosen member $H=s_0^p$ if and only
if $s=s_0$. For any finite set of the selected points and the
corresponding members, there is exactly one incidence per point.
In the proof of Theorem~\ref{thm:pure}, the interpolation degree is
kept below $p/t$; this restriction is accounted for by the term $mn/p$.

\section*{Acknowledgements} The author is grateful to Thang Pham for valuable comments on the distance
problem and for pointing out relevant references.
\section*{LLM Use}

The author used LLM models to assist with literature review,
drafting and revising the exposition, and developing and checking proofs.
The author takes full responsibility for the content of this paper,
including the correctness of its mathematical proofs and the accuracy
of its references.

\end{document}